\documentclass[10pt,onecolumn,letterpaper]{article}

\usepackage{multirow}
\usepackage{hhline}

\usepackage{cvpr}
\usepackage{times}
\usepackage{epsfig}
\usepackage{graphicx}
\usepackage{amsthm}
\usepackage{amsmath,bm}
\usepackage{amssymb}
\usepackage{algorithm,algpseudocode}
\usepackage{booktabs}
\usepackage{subfigure}
\usepackage{epsfig}
\usepackage{wrapfig}
\usepackage[breaklinks=true,
bookmarks=ture,citecolor=red,    
colorlinks=true,
linkcolor=blue,
filecolor=magenta,      
urlcolor=cyan,]{hyperref}

\allowdisplaybreaks

\newtheorem{theorem}{Theorem}
\newtheorem{lemma}{Lemma}

\newtheorem{corollary}{Corollary}
\newtheorem{definition}{Definition}
\newtheorem{assumption}{Assumption}

\newtheorem*{lemma2}{Lemma}
\newtheorem*{corollary2}{Corollary}

\numberwithin{equation}{section}
\usepackage{datetime}

\def\r1{\color{black}}
\def\b1{\color{blue}}

\usepackage{fancyhdr}
\usepackage{authblk}
\cvprfinalcopy 
\allowdisplaybreaks


\begin{document}
\lhead{}

\title{Stochastically Controlled Stochastic Gradient for the Convex and Non-convex Composition 
	problem}
\author[1]{ Liu Liu\thanks{lliu8101@uni.sydney.edu.au}}
\author[2]{ Ji Liu\thanks{ji.liu.uwisc@gmail.com}}
\author[3]{ Cho-Jui Hsieh\thanks{chohsieh@ucdavis.edu}}
\author[1]{ Dacheng Tao\thanks{dacheng.tao@sydney.edu.au}}
\affil[1]{UBTECH Sydney AI Centre and SIT, FEIT, The University of Sydney}
\affil[2]{Department of Computer Science,	University of Rochester}
\affil[3]{University of California, Davis}
\maketitle
\maketitle
\thispagestyle{fancy}

\begin{abstract}
	In this paper, we consider the convex and non-convex composition problem with the structure 
	$\frac{1}{n}\sum\nolimits_{i = 1}^n {{F_i}( {G( x )} )}$, where $G( x ) 
	=\frac{1}{n}\sum\nolimits_{j = 1}^n {{G_j}( x )} $ is the inner function, and $F_i(\cdot)$ is 
	the 
	outer function.	We explore the  
	variance reduction based  method to solve the composition optimization. Due to the fact that 
	when the number of inner function and outer function are large, it is not reasonable to 
	estimate them directly, thus we apply the 
	stochastically controlled stochastic gradient 
	(SCSG) 	method to estimate the gradient of the composition function and the value of the inner 
	function. 	
	The query complexity of our proposed method  for the convex  and  non-convex problem is equal 
	to or better than the current method for the composition  problem.
	Furthermore, we also present the mini-batch version of the proposed method, which has the 
	improved the query complexity with related to the size of the mini-batch.
\end{abstract}

\section{Introduction}
In this paper, we study the problem of the following non-convex composition 
minimization
{\small
\begin{align}\label{VRNonCS-SCSG:ProblemMainComposition}
\mathop {\min }\limits_{x \in {\mathbb{R}^N}} \left\{ {f( x )\mathop  = \limits^{{\rm{def}}} F( {G( 
		x )} ) \mathop  = \limits^{{\rm{def}}} \frac{1}{n}\sum\limits_{i = 1}^n {{F_i}\left( 
		{\frac{1}{n}\sum\limits_{j = 1}^n {{G_j}( x )} } \right)} } \right\},
\end{align}}
where $f$: $\mathbb{R}^N$ $\to$ $\mathbb{R}$ is a non-convex function,  each $F_i$: $\mathbb{R}^M 
\to \mathbb{R}$ is a smooth function, each  $G_i$: $\mathbb{R}^N$$\to$ $\mathbb{R}^M$ is a mapping 
function, $n$ is the number of $F_i$'s and $G_j$'s.  We call 
$G(x)$:$=\frac{1}{n}\sum\nolimits_{j=1}^nG_j(x)$  the inner function, and $F( {w} )$:$ = 
\frac{1}{n}\sum\nolimits_{i = 1}^n {{F_i}( { w} )} $ the outer function. There are many machine  
learning application such as such as reinforcement learning 
\cite{sutton1998reinforcement,wang2017stochastic,wang2016accelerating}
and nonlinear 
embedding 
\cite{hinton2003stochastic,dikmen2015learning}, that can be formed 
to the composition problem with two finite-sum structure $\frac{1}{n}\sum\nolimits_{i = 1}^n 
{{F_i}( {\frac{1}{n}\sum\nolimits_{j = 1}^n {{G_j}( x )} } )} $. For example,
	{ \begin{align*}
	\mathop {\min }\limits_x {\| {\mathbb{E}[ B ]x - \mathbb{E}[ b ]} \|^2},
	\end{align*}
	where $\mathbb{E}[ B ] = I - \gamma {P^{\pi} }$, $\gamma\in(0,1)$ is a discount factor, 
	$P^{\pi}$ is the transition probability,	$\mathbb{E}[ b ] = {r^{\pi} }$, and $r^{\pi}$ 
	is the expected state transition reward.} Another example is the mean-variance in 
risk-averse learning:
\begin{align*}
\mathrm{min}_x\, \mathbb{E}_{a,b}[ h( {x;a,b} ) ] + \lambda \mathrm{Var}_{{a,b}}[ {h( {x;a,b} 
	)} ],
\end{align*}
where $h(x;a,b)$ is the  loss function with random variables $a$ and $b$. $\lambda>0$ is a 
regularization parameter. Stochastic neighbour embedding (SNE)  
\cite{hinton2003stochastic}
is the non-convex problem that  map 
data from a high dimensional space to a 
low dimensional space. 
\begin{align*}
\mathop {\min }\nolimits_x \sum_t^{} {\sum_i^{} {{p_{i|t}}\log 
		\frac{{{p_{i|t}}}}{{{q_{i|t}}}}} } ,
\end{align*}
where 
\begin{align*}
{p_{i|t}} = \frac{{\exp ( - {{\left\| {{z_t} - {z_i}} \right\|}^2}/2\sigma 
		_i^2)}}{{\sum\nolimits_{j \ne t} {\exp ( - {{\left\| {{z_t} - {z_j}} \right\|}^2}/2\sigma 
			_i^2)} 
}}, \quad {q_{i|t}} = \frac{{\exp ( - {{\left\| {{x_t} - {x_i}} 
				\right\|}^2})}}{{\sum\nolimits_{j 
			\ne t} {\exp ( - {{\left\| {{x_t} - {x_j}} \right\|}^2})} }}, 
\end{align*}
and $\sigma_i$ is {the predefined parameter to control the sensitivity to the distance.}
$\{z_i\}_{i=1}^n$ and $\{x_i\}_{i=1}^n$ denote the representation of $n$ 
data points in the high dimensional space and the low dimensional space, respectively.


Recently, there many stochastic optimization methods solving the composition problem, such as 
stochastic 
gradient method \cite{wang2017stochastic,wang2016accelerating} and the variance-reduction based 
method 
\cite{lian2016finite,liu2017duality,liu2017variance}. However, there are two main problems 
encountered in the composition function: 1) the inner function $G(x)$ is the finite-sum structure. 
When the number of $G_i(x)$ is large, it will need more computation cost; 2) if the inner function 
$G(x)$ is estimated, the expectation of the stochastic gradient $f(x)$ with respect to $i_k,j_k\in 
[n]$ is 
not 
equal to the $\nabla f(x)$. That is 
\begin{center}
	$\mathbb{E}_{i_k,j_k}[ ( \partial {G_{j_k}} (x))^\mathsf{T}\partial {F_{i_k}}( \tilde G(x)) ] 
	\ne 
	\nabla f( x 
	)$,
\end{center}
where $\tilde G(x)) $is the estimation of $G(x)$, $\partial {G_{j_k}}$  is the partial gradient of 
$G_{j_k}(x)$. Furthermore, we use the  query complexity to evaluate the algorithm, that is the 
 number of {component function} queries used to compute the gradient.

Stochastic gradient method, such as Stochastic compositional gradient descent (SCSG) 
\cite{wang2017stochastic} estimates the inner 
function $G(x)$ by an iterative weighted average of the past values of the $G(x)$, then perform the 
stochastic quasi-gradient iteration. The advantage of this method is that it does not depend on $n$ 
but 
with poor query complexity to the desired point.  Variance-reduction method such as 
Compositional-SVRG \cite{lian2016finite}  
 estimates the inner function $G(x)$ and the gradient of function 
$f(x)$ by using the finite-sum structures, which deriving the linear convergence rate with the 
relationship of $n$. Table \ref{VRNonCS-SCSG:Table} present the query complexity result with 
different algorithms. 

\begin{table*}[t]
	\centering
	\begin{tabular}{|l|c|c|}
		\hline
		{Algorithm}  
		&{Strongly Convex}
		&Non-convex \\      
		\hline
		SCGD \cite{wang2017stochastic}
		&{$\mathcal{O}(1/\varepsilon^{3/2})$}
		&{$\mathcal{O}(1/\varepsilon^{4})$}
		\\
		\hline
		Acc-SCGD \cite{wang2017stochastic}
		&{$\mathcal{O}(1/\varepsilon^{5/4})$} 
		&{$\mathcal{O}(1/\varepsilon^{7/2})$} 
		\\
		\hline
		ASC-PG \cite{wang2016accelerating}
		&{$\mathcal{O}(1/\varepsilon^{5/4})$}
		&{$\mathcal{O}(1/\varepsilon^{9/4})$}
		\\
		\hline
		SC-SVRG\cite{lian2016finite}\cite{liu2017variance} 
		&  ${\cal O}\left( {\left( {n + L_f^2/{\mu ^4}} \right)\log \left( {1/\varepsilon } 
			\right)} \right).$
		&{$\mathcal{O}( n^{4/5}/\varepsilon
			)$} 
		\\
		\hline
		SC-SCSG 
		&  ${\cal O}\left( {\left( {\min \left\{ {n,\frac{1}{{\varepsilon {\mu ^2}}}} \right\} + 
				\frac{{L_f^2}}{{{\mu ^2}}}\min \left\{ {n,\frac{1}{{{\mu ^2}}}} \right\}} 
				\right)\log 
			\left( {1/\varepsilon } \right)} \right)$
		&{$\mathcal{O}( \min \{ 
			{{1}/{{{\varepsilon 
							^{9/5}}}},{{{n^{4/5}}}}/{\varepsilon }} 
			\} )$} 
		\\
		\hline
	\end{tabular}%
	\vspace{5pt}
	\caption{Comparison of the query complexity with different algorithms }
	\label{VRNonCS-SCSG:Table}%
\end{table*}%

Motivated by the recent work \cite{lei2017less,lei2017non,AllenZhu2017-natasha2} that the 
convergence rate of the finite-sum structure function has the general result under the relationship 
between $n$ and $\varepsilon$. Here, we use $\varepsilon$ to evaluate the terminal of the convex 
and non-convex function by $f\left( x 
\right) - f\left( {{x^*}} \right) \le \varepsilon$ and 
${\left\| {\nabla f\left( x \right)} \right\|^2} \le \varepsilon$, respectively, where $x^*$ is the 
optimal point 
in the convex function. The core aspect of these kinds of algorithms is similar to the stochastic 
variance-reduced gradient (SVRG) that using a snapshot vector to compute the ``gradient" of the 
function. The difference lies that the gradient is no longer computed directly but rather using the 
random subset, called   stochastically controlled stochastic gradient (SCSG). We explore the SCSG 
based method to the composition problem with both convex and non-convex function and analyze the 
corresponding the convergence and query complexity.

In this paper, we develop a novel stochastic composition optimization through  stochastically 
controlled stochastic gradient (SC-SCSG) method to two finite-sum structure. The main contributions 
are summarized below:
\begin{itemize}
	\item We provide the variance reduction based method to estimate the inner function $G(x)$. 
	Similar to the SCSG that estimate the gradient, the function $G(x)$ can also be estimated by a 
	snapshot $\tilde{x}_s$, in which  $G(\tilde{x}_s)$ is not computed directly, but rather based 
	on the random subset from $[n]$.  We also analyze the size of the subset such that can lead to 
	the desired precision for both convex and non-convex function. 
	\item 	After obtaining the estimated inner function, we consider the gradient of the function 
	$f(x)$. Here, we can also apply the SCSG based method to estimate the gradient. However, there 
	are 
	two situations 	encountered in the estimate process. 1) the expectation of the gradient is no 
	longer the 	unbiased estimation. 2) the gradient of $f(\tilde{x}_s)$ at the snapshot is 
	formed by two 	random subsets, which are used for  the function $F_i$ and $G_j$ respectively. 
	Nevertheless, 
	we also provide the bound of the subset size  that we can use the estimated gradient to update 
	the iteration. The details analysis can be referred to Section 
	\ref{VRNonCS-SCSG:Section:SCSG-convex}.
	\item The mini-batch version of the proposed algorithm is also provided for both the convex and 
	non-convex function. The corresponding query complexities are improved based on the size of the 
	mini-batch. More information can be referred to Section 
	\ref{VRNonCS-SCSG:Section:SCSG-minibatch}.
\end{itemize}
\subsection{Results}
We give the general query complexity  of the composition problem based on SCSG based method. The 
results present us an intuitive explanation for comparing with other algorithms. Note that the 
Algorithm \ref{VRNonCS-SCSG:AlgorithmI} can be used to both convex and non-convex problems that 
deriving the corresponding query complexities. Furthermore, Algorithm 
\ref{VRNonCS-SCSG:AlgorithmII} present the mini-batch version of the proposed method.

\textbf{{Convex function}} The query complexity for the convex function is 
\begin{center}
	$\mathcal{O}\left( 
	{\left( {\min \left\{ {n,\frac{1}{{{\varepsilon\mu ^2}}}} \right\} + \frac{{L_f^2}}{{{\mu 
						^2}}}\min 
			\left\{ {n,\frac{1}{{{\mu ^2}}}} \right\}} \right)\log \left( {1/\varepsilon } 
		\right)} 
	\right)$,
\end{center}
where $\mu$ is the constant of strongly convex of $f(x)$. The result is the same as that of 
\cite{lian2016finite} if $n\le 1/(\varepsilon\mu^2)$

\textbf{Non-convex function} The query complexity is
	{$\mathcal{O}( \min \{ 
		{{1}/{{{\varepsilon 
						^{9/5}}}},{{{n^{4/5}}}}/{\varepsilon }} 
		\} )$}, which can be better than that of \cite{wang2016accelerating} and comparable 
	to that of \cite{liu2017variance}.

\textbf{{Mini-batch}} For the mini-batch version, the query complexity  can be improved  to some 
extent comparing with above results, that is {$\mathcal{O}( \min \{ 
	{{1}/{{{\varepsilon 
					^{9/5}}}},{{{n^{4/5}}}}/{\varepsilon }} 
	\} /b^{1/5})$} and
\begin{center}
	$\mathcal{O}\left( 
	{\left( {\min \left\{ {n,\frac{1}{{{\varepsilon\mu ^2}}}} \right\} + \frac{{L_f^2}}{{{b\mu 
						^2}}}\min 
			\left\{ {n,\frac{1}{{{\mu ^2}}}} \right\}} \right)\log \left( {1/\varepsilon } 
		\right)} 
	\right)
	$,
\end{center} 
 for convex and non-convex function.
\subsection{Related work}
As the data increase, stochastic optimization  has been the popular method in machine learning and 
deep learning, especially for the finite-sum function. The typical algorithm include (stochastic 
gradient descent) SGD \cite{ghadimi2016accelerated}, stochastic variance reduction gradient 
(SVRG)\cite{johnson2013accelerating,reddi2016stochastic}, 
stochastic dual coordinate 
ascent (SDCA) \cite{shalev2014accelerated,shalev2013stochastic} and the  accelerated 
method Nesterov's 
method \cite 
{nesterov2013introductory},  accelerated randomized proximal coordinate (APCG) 
\cite{lin2014accelerated,lin2014acceleratedSIAM} and  Katyusha  method \cite{allen2016katyusha}.  
As 
the function is finite-sum structure, 
the general process for optimization is randomly selected one or a block component function to 
estimate the gradient. Thus the estimated gradient leads to the large variance of the gradient. 
Variance reduction method estimates the gradient by using a  snapshot in which the gradient of the 
function is computed at this point, which can appropriately reduce the variance.

The composition function can also be solved by using above algorithms, however, two 
finite-sum structures prevent implementation directly due to the fact that the computation of the 
inner function may increase the query complexity. Recently, Wang \textit{et al.} 
\cite{wang2017stochastic} first proposed the first-order stochastic compositional gradient methods 
(SCGD)  to solve  such  problems, which used two steps to alternately update the variable and inner 
function.  The SCGD method has the query complexity 
$\mathcal{O}(\varepsilon^{-7/2})$ for the general function and $\mathcal{O}(\varepsilon^{-5/4})$ 
for the strongly 
convex function. 
Liu \textit{et al.} \cite{wang2016accelerating} employed  Nesterov's method to accelerate the 
composition problem with $\mathcal{O}(\varepsilon^{-5/4})$ and $\mathcal{O}(\varepsilon^{-9/4})$ 
for strongly convex and non-convex function.  However, these methods estimate the inner function 
by an iterative weighted average of the past function. Such estimation did not take advantage of 
the finite-sum structure.

Based on the variance reduction technology, Lian \textit{et al.} \cite{lian2016finite} first 
applied the SVRG-based 
method to estimate the inner function $G(x)$ and the gradient of the function $f(x)$ as well. The 
linear convergence rate is obtained. In the following, 
Liu \textit{et. al} \cite{liu2017duality} apply the duality-free method to the composition problem 
and derive the linear convergence rate as well.  Yu and 
Huang \cite{yu2017fast}  applied the  ADMM-based \cite{ boyd2011alternating} method  and provide an 
analysis of the convex function without requiring Lipschitz smoothness. Moreover, Liu \textit{et. 
al} \cite{liu2017variance} considered the non-convex function and analyzed the query complexity 
with 
different sizes of the inner function and outer function. The details of the query complexity are 
provided.

There are many recent papers considering the variance reduced method that estimates the gradient 
using the random subset rather than computing directly. 
Lei and Jordan \cite{lei2017less} proposed an SCSG method to the convex finite-sum function, and 
then applied to the non-convex problem in \cite{lei2017non} that using less than a single pass to 
compute the gradient at the snapshot point. In the following, Allen-Zhu 
\cite{AllenZhu2017-natasha2} also proposed Natasha1.5 algorithm, in which the gradient for each 
epoch is based on the random subset. Moreover, the objective function has the regularization term. 
Liu \textit{et. al} \cite{liu2018stochastic} applied the SCSG based method to the zeroth-order 
optimization with the finite-sum function.

The rest of paper is organized as follows: In section \ref{VRNonCS-SCSG:Section:Preliminaries}, we 
give preliminaries used for analyzing the proposed algorithm. Section 
\ref{VRNonCS-SCSG:Section:SCSG} presents the SCSG-based method for the composition problem. we give 
the convergence and query complexity for the convex and non-convex function in Section 
\ref{VRNonCS-SCSG:Section:SCSG-convex} and Section \ref{VRNonCS-SCSG:Section:SCSG-Non-convex}, 
respectively. Section \ref{VRNonCS-SCSG:Section:SCSG-minibatch} gives the mini-batch version. 
We conclude our paper in Section \ref{VRNonCS-SCSG:Section:Conclusion}.

\section{Preliminaries}\label{VRNonCS-SCSG:Section:Preliminaries}
Throughout this paper, we use the Euclidean norm denoted by $\|\cdot\|$. We use $i \in [ n ]$ and 
$j 
\in [ m ]$ to denote that $i$ and $j$ are generated from $[ n ] = \{ {1,2,...,n} \}$, and $[ m ] = 
\{ {1,2,...,m} \}$. We denote by ${( {\partial G( x )} )^\mathsf{T}}\nabla F( {G( x )} )$ the full 
gradient of the function $f$,  $\partial G( x )$ the partial gradient of $G$, and  ${( {\partial 
		G_{j_k}( x )} )^\mathsf{T}}\nabla F_{i_k}( {G( x )} )$ as the stochastic gradient of the 
		function 
$f$, where $i_k$ and $j_k$ are randomly selected from $[n]$ and $[m]$.  We 
use $\mathbb{E}$ to denote the expectation. 
Note that all the variable such as subset $\cal A$ and $\cal B$, element $i_k$ and $j_k$ are 
independently selected from $[n]$ or $[m]$, in particular, the element in $\cal 
A$ and $\cal B$ are independent. So we  use  $\mathbb{E}$ in instead of 
$\mathbb{E}_{i_k}$,$\mathbb{E}_{j_k}$ ,$\mathbb{E}_{\cal A}$  and 
$\mathbb{E}_{\cal B}$ except particular stated. We use  $A=|\cal A|$ to denote the number of the 
elements in the set $\cal D$ and define ${G_\mathcal{A}(x)} = \frac{1}{A}\sum_{1 \le 
	j \le A} {{G_{\mathcal{A}\left[ j \right]}(x)}} $. 
Recall two definitions on Lipschitz function and smooth function.
\begin{definition} \label{VRNonCS-SCSG:DefinitionLipschitzFunction}
	A function $p$ is called a Lipschitz function on $ \mathcal{X}$ if there is a constant $B_p$ 
	such that $	\| {p( x ) - p( y )} \| \le {B_p}\| {x - y} \|$, $\forall x,y\in \mathcal{X}$.
\end{definition}
\begin{definition}
	A function $p$ is called a $L_p$-smooth function on $ \mathcal{X}$ if there is a constant $L_p$ 
	such that $\| {\nabla p( x ) - \nabla p( y )} \| \le {L_p}\| {x - y} \|$, and equal to $p( y ) 
	\le p( x ) + \langle {\nabla p( x ),y - x} \rangle  + {L_p}/2{\| {y - x} \|^2}$, $\forall 
	x,y\in \mathcal{X}$.
\end{definition}
We make the following assumptions used for the  discussion of the convergence rate and complexity 
analysis.
\begin{assumption}\label{VRNonCS-SCSG:Assumption:f-strong}
	For function $f$: ${\mathbb{R}^M} \to {\mathbb{R}}$, all $i\in [n]$,  
	\begin{itemize}
		\item $F$ is $\mu$-strongly convex satisfying $f\left( y \right) \ge f\left( x \right) + 
		\langle f\left( x \right),y - x\rangle  + \frac{\mu }{2}E{\left\| {x - y} \right\|^2}$.
		\item  $f$ has the optimal point $x^*$, then $\langle f\left( {{x_k}} \right),{x^*} - 
		{x_k}\rangle  \le  - \mu {\left\| {{x_k} - {x^*}} \right\|^2}$.
	\end{itemize}
\end{assumption}
\begin{assumption}\label{VRNonCS-SCSG:Assumption:G}
	For function $G_j$: ${\mathbb{R}^N} \to {\mathbb{R}^M}$, all $j\in [m]$,  
	\begin{itemize}
		\item $G_j$ has the bounded Jacobian with a constant ${B_G}$, that is $\| {\partial G_j( x 
			)} \| \le {B_G}$, $\forall  x \in {\mathbb{R}^N}$,  then  $ G_j( x )$ is also a 
			Lipschitz 
		function that satisfying $	\| {G_j( x ) - G_j( y )} \| \le {B_G}\| {x - y} \|$,  $\forall  
		x,y \in {\mathbb{R}^N}$.
		\item $G_j$ is $L_G$-smooth satisfying $	\| {\partial G_j( x ) - \partial G_j( y )} \| 
		\le {L_G}\| {x - y} \|$,  $\forall  x,y \in {\mathbb{R}^N}$.
	\end{itemize}
\end{assumption}
\begin{assumption}\label{VRNonCS-SCSG:Assumption:F}
	For function $F_i$: ${\mathbb{R}^M} \to {\mathbb{R}}$, all $i\in [n]$,  
	\begin{itemize}
		\item $F_i$ has the bounded gradient with a constant ${B_F}$,  that is $\|{\nabla F_i( y )} 
		\| \le {B_F}$, $\forall  y \in {\mathbb{R}^M}$.
		\item  $F_i$ is $L_F$-smooth satisfying $	\| {\nabla F_i( x ) - \nabla  F_i( y )} \| \le 
		{L_F}\| {x - y} \|$,  $\forall  x,y \in {\mathbb{R}^M}$.
	\end{itemize}
\end{assumption}

\begin{assumption} \label{VRNonCS-SCSG:Assumption:GF}
	For function $F_i(G(x))$: ${\mathbb{R}^N} \to {\mathbb{R}}$, all $i\in [n]$,  there exist a 
	constant $L_f$ satisfying 	
	\begin{align*}
	\| {{( {\partial {G_j}( x )} )^\mathsf{T}}\nabla {F_i}( {G( x )} ) - {( {\partial {G_j}( y )} 
			)^\mathsf{T}}\nabla {F_i}( {G( y )} )} \|\nonumber \le {L_f}\| {x - y} \|,\forall j 
	\in [m], 
	\forall  
	x,y \in {\mathbb{R}^N}.
	\end{align*}		
\end{assumption}

\begin{assumption} \label{VRNonCS-SCSG:AssumptionIndependent}
	We assume that $i_k$ and $j_k$ are independently and randomly selected from $[n]$ and $[m]$, 
	$z\in \mathbb{R} ^M, x\in \mathbb{R}^N$, 
	\begin{align*}
	\mathbb{E}[ {{( {\partial {G_{j_k}}( x )} )^\mathsf{T}}\nabla {F_{i_k}}( z )} ] = 
	{( {\partial G( x )} )^\mathsf{T}}\nabla F( z ), 
	\end{align*} 	
\end{assumption}

\begin{assumption}\label{VRNonCS-SCSG:Assumption:Middle-Bound}
	We assume that $H_1$ and $H_2$ are the upper bounds on the variance of the functions $G(x)$ and 
	$(\partial G(x))^\mathsf{T}\nabla F(y)$, respectively, that is,
	{
	\begin{align*}
	\frac{1}{n}\sum\limits_{i = 1}^n& {{{\left\| {G(x) - {G_i}(x)} \right\|}^2}}  \le {H_1}.\\
	\frac{1}{{n^2}}\sum\limits_{j = 1}^{n} \sum\limits_{i = 1}^n& {{{\left\| {{(\partial 
						G(x))^\mathsf{T}}\nabla F(y) - {(\partial {G_j}(x))^\mathsf{T}}\nabla 
						{F_i}(y)} \right\|}^2}}  
	\le {H_2}.	
	\end{align*}
}
\end{assumption}

In the paper, we denote by $x_k^s$ the $k$-th inner iteration at $s$-th epoch. But in each epoch 
analysis, we drop the superscript $s$ and denote by $x_k$ for $x_k^s$ . We let $x^*$ be the optimal 
solution of $f(x)$. Throughout the convergence analysis, we use  $\mathcal{O}( \cdot )$ notation to 
avoid many constants, such as $B_F$, $B_G$, $L_F$, $L_G$ and $L_f$,... that are irrelevant with the 
convergence rate  and provide insights to analyze the iteration and query complexity.

\section{Stochastic Composition via SCSG for the composition problem} 
\label{VRNonCS-SCSG:Section:SCSG}
\begin{algorithm*}[t]
	\caption{SC-SCSG for the  composition problem}
	\label{VRNonCS-SCSG:AlgorithmI}
	\begin{algorithmic}
		\Require $K$, $S$, $\eta$ (learning rate), $\tilde{x}_0$ and $\mathcal{D} = \left[ 
		{{\mathcal{D}_1},{\mathcal{D}_2}} \right]$
		\For{$s =0,1, 2,\cdots,S-1$} 
		\State Sample from $[n]$ for D times to form mini-batch $\mathcal{D}_1$ 
		\State Sample from $[n]$ for D times to form mini-batch $\mathcal{D}_2$ 		
		\State $\nabla {\hat f_{ \mathcal{D}}}({{\tilde x}_s}) = {(\partial G_{ 
				\mathcal{D}_1}({{\tilde 
					x}_s}))^\mathsf{T}}{\nabla {F_{{ \mathcal{D}_2}}}(G_{ 
				\mathcal{D}_1}({{\tilde 
					x}_s}))}  
		$\Comment{D Queries}
		\State $x_0=\tilde{x}_s$
		\For{$k =0,1,2,\cdots,K-1$}
		\State
		Sample from $[m]$  to form mini-batch $\mathcal{A}$ 
		\State
		${{\hat G}_k} = {G_{{{\cal A}}}}({x_k}) - {G_{{{\cal A}}}}({{\tilde x}_s}) + 
		{G_{\mathcal{D}_1}}({{\tilde 
				x}_s})$\Comment{A Queries}
		\State Uniformly and randomly pick $i_k$ and $j_k$ from $[n]$
		\State
		Compute the estimated gradient $\nabla {{\tilde f}_k}$ from 
		(\ref{VRNonCS-SCSG:Definition:Estimate-f})
		\Comment{4 Queries}
		\State
		${x_{k+1}}= {x_k} - {\eta} \nabla {{\hat f}_k} $
		\EndFor
		\State Update $\tilde{x}_{s+1}=x_K $
		\EndFor \\	
		\textbf{Output:}  $\hat x_k^s$ is uniformly and randomly chosen from  $s\in\{0,...,S-1\}$ 
		and $k\in \{0,..,K-1\}$.
	\end{algorithmic}
\end{algorithm*}

In this section, we present the variance-reduction based method for the composition problem, which 
can be used for both the convex and non-convex function. Before describing the proposed algorithm, 
we 
recall the original SVRG \cite{johnson2013accelerating}. The general process of the SVRG works as 
follows. The update process is divided into $S$ epochs, each of the epoch 
consists of K iterations. At the beginning of each epoch, SVRG define a snapshot vector 
$\tilde{x}_s$, and then compute the full gradient $\nabla 
f(\tilde{x}_s)$. In the inner iteration of the current epoch, SVRG defines the estimated gradient 
by 
randomly selecting $i_k$ from [n] at the $k$-th iteration,
\begin{align}\label{VRNonCS-SCSG:Definition:Estimate-f-svrg}
{(\partial G({x_k}))^\mathsf{T}}\nabla {F_{{i_k}}}({G_k}) - {(\partial 
	G({{\tilde 
			x}_s}))^\mathsf{T}}\nabla {F_{{i_k}}}(G({{\tilde x}_s})) + \nabla f({{\tilde x}_s}).
\end{align} 

However, for the composition problem, there are also variance-reduction based methods in
\cite{lian2016finite}, \cite{liu2017duality} and 
\cite{liu2017variance}. The difference with SVRG is that there is another estimated function for 
$G(x)$, as $G(x)$ is also the finite-sum structure. There methods defined the estimate function as 
\begin{align}\label{VRNonCS-SCSG:Definition:Estimate-G-svrg}
{{\tilde G}_k} = {G_{{{\cal A}}}}({x_k}) - {G_{{{\cal A}}}}({{\tilde x}_s}) + 
{G}({{\tilde 
		x}_s}),
\end{align}
where  $\mathcal{A}$ is the mini-batch formed by randomly sampling from $[n]$. Whereas, as the 
number of the inner function $G_j$ and the outer function $F_i$ increase, it is not reasonable to 
compute the full gradient of $f(x)$ and the full function $G(x)$ directly for each epoch. 

Extended from the  
SCSG \cite{lei2017non}\cite{lei2017less} and Natasha1.5 \cite{AllenZhu2017-natasha2}, we 
present a new algorithm for the composition problem  as shown in Algorithm 
\ref{VRNonCS-SCSG:AlgorithmI}. First of all, we introduce the two subset $\mathcal{D}_1$ and 
$\mathcal{D}_2$, which are independent with each other and formed by randomly selecting from $[n]$, 
respectively. We define  $\mathcal{D} = \left[ 
{{\mathcal{D}_1},{\mathcal{D}_2}} \right]$ for a new variable. $\mathcal{D}_1$ is used for 
estimating the inner function. Based on the variance 
reduction technology, the estimated inner function  at $k$-th 
iteration of $s$-th epoch is
\begin{align}\label{VRNonCS-SCSG:Definition:Estimate-G}
{{\hat G}_k} = {G_{{{\cal A}}}}({x_k}) - {G_{{{\cal A}}}}({{\tilde x}_s}) + 
{G_{\mathcal{D}_1}}({{\tilde 
		x}_s}),
\end{align}
where the subset of $\mathcal{A}$ is the same as in 
(\ref{VRNonCS-SCSG:Definition:Estimate-G-svrg}). Note that $\cal A$ and $\cal D$ are independent 
with each other. The difference with (\ref{VRNonCS-SCSG:Definition:Estimate-G-svrg}) is computing 
the third them that is 
under the 
subset $\mathcal{D}_1$ rather than $[n]$ as in (\ref{VRNonCS-SCSG:Definition:Estimate-G-svrg}). 
Throughout the paper, we assume that $|\cal A|\le |\cal D|$. $\mathcal{D}_2$ is use to estimate the 
outer function $F$. The key distinguish with  \cite{lei2017non, lei2017less,AllenZhu2017-natasha2} 
is the biased full gradient of $f(\tilde{x}_s)$. We define this estimated full gradient of 
$f(\tilde{x}_s)$ for each  epoch as
$\nabla {\hat f_{ \mathcal{D}}}({{\tilde x}_s}) = {(\partial G_{\mathcal{D}_1}({{\tilde 
			x}_s}))^\mathsf{T}}\nabla {F_{{ \mathcal{D}_2}}}(G_{ \mathcal{D}_1}({\tilde 
	x}_s))$. However, ${\mathbb{E}_{\mathcal{A},\mathcal{D}}}[ {\nabla {{\hat f}_{\cal 
			D}}({{\tilde 
			x}_s})} ] \ne \nabla f({{\tilde x}_s})$.  Then, we estimate the gradient of the  
$f(x_k)$ by 
\begin{align}\label{VRNonCS-SCSG:Definition:Estimate-f}
\nabla {{\tilde f}_k} = {(\partial {G_{{j_k}}}({x_k}))^\mathsf{T}}\nabla 
{F_{{i_k}}}({{\hat G}_k}) 
- {(\partial {G_{{j_k}}}({{\tilde x}_s}))^\mathsf{T}}\nabla 
{F_{{i_k}}}({G_{\mathcal{D}_1}}({{\tilde 
		x}_s})) + 
\nabla {{\hat f}_\mathcal{D}}({{\tilde x}_s}),
\end{align} 
where $i_k$ and $j_k$ are  
randomly selected from [n] at the $k$-th iteration for function $F$ and $G$, respectively. 
Furthermore, ${\mathbb{E}_{i_j,j_k\mathcal{A},\mathcal{D}}}[ {\nabla {{\tilde f}_k}} ] \ne \nabla 
f({x_k})$ as well. This gives us more discussion about the upper bound with respect to the 
estimated 
function and the gradient under the new random subset $\cal D$.
\subsection{Technical Tool}
For the subset $\mathcal{A} \subseteq [n]$, we present the following lemma that  the variance of a 
random variable decreases by a factor $|\cal A|$ if we choose
$|\cal A|$ independent element from $[n]$ and average them. The proof process is trivial and can 
be referred to Appendix. However, it present an important tool for analyzing the query complexity 
under the different size of the subset.
\begin{lemma}\label{VRNonCS-SCSG:Appendix:Lemma:Inequation-indepent}
	If $v_1,...,v_m\in \mathbb{R}^d$ satisfy 
	$\sum\nolimits_{i = 1}^m {{v_i}}  = \vec 0$, and $\cal A$ is a non-empty, 
	uniform 
	random subset of $[m]$, $A=|\cal A|$, then
	\begin{center}
		${\mathbb{E}_{\cal A}} {{{\left\| {\frac{1}{A}\sum\nolimits_{b \in 
							{\cal 
								A}} 
						{{v_b}} 
					} 
					\right\|}^2}}  \le \frac{{\mathbb{I}\left( {A < m} 
				\right)}}{A}\frac{1}{m}\sum\limits_{i = 1}^m {v_i^2}.$
	\end{center}
%
	Furthermore, if the elements in $\cal A$ are independent, then
	\begin{center}
		$	{\mathbb{E}_\mathcal{A}}{{{\left\| {\frac{1}{A}\sum\nolimits_{b \in 
							\mathcal{A}} {{v_b}} } 
					\right\|}^2}} = \frac{1}{{An}}\sum\limits_{i = 1}^n 
		{v_i^2}.$
	\end{center} 
%
\end{lemma}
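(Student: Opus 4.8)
The plan is to handle the two claims separately, reducing each to a second-moment computation on the indicator variables encoding membership in $\mathcal{A}$. For the first (sampling-without-replacement) bound, I would set $\xi_i = \mathbb{I}(i \in \mathcal{A})$ so that $\sum_{b \in \mathcal{A}} v_b = \sum_{i=1}^m \xi_i v_i$, and expand $\mathbb{E}\big\|\sum_{i=1}^m \xi_i v_i\big\|^2 = \sum_{i,j} \mathbb{E}[\xi_i\xi_j]\langle v_i, v_j\rangle$. The two ingredients I need are the first and second moments of the indicators: since $\mathcal{A}$ is a uniform subset of fixed size $A$, we have $\mathbb{E}[\xi_i] = \mathbb{E}[\xi_i^2] = A/m$ and, for $i \ne j$, $\mathbb{E}[\xi_i\xi_j] = A(A-1)/(m(m-1))$.

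Next I would split the double sum into diagonal and off-diagonal parts and invoke the hypothesis $\sum_{i=1}^m v_i = \vec{0}$ to rewrite the cross terms. Concretely, $\sum_{i \ne j}\langle v_i, v_j\rangle = \big\|\sum_i v_i\big\|^2 - \sum_i \|v_i\|^2 = -\sum_i \|v_i\|^2$, so the off-diagonal contribution collapses onto $\sum_i\|v_i\|^2$. Collecting terms gives $\mathbb{E}\big\|\sum_i \xi_i v_i\big\|^2 = \big(\frac{A}{m} - \frac{A(A-1)}{m(m-1)}\big)\sum_i\|v_i\|^2 = \frac{A(m-A)}{m(m-1)}\sum_i\|v_i\|^2$, and dividing by $A^2$ yields $\mathbb{E}_{\mathcal{A}}\big\|\frac{1}{A}\sum_{b\in\mathcal{A}}v_b\big\|^2 = \frac{m-A}{A(m-1)}\cdot\frac{1}{m}\sum_i\|v_i\|^2$. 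The stated bound then follows by observing that the factor $\frac{m-A}{m-1}$ equals $0$ when $A=m$ (matching $\mathbb{I}(A<m)$) and is at most $1$ whenever $1 \le A < m$, using that $\mathcal{A}$ is non-empty.

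For the second (independent / with-replacement) claim, I would instead model $\mathcal{A}$ as $A$ i.i.d. draws $b_1,\dots,b_A$ uniform on $[m]$, so each $v_{b_\ell}$ is an i.i.d. random vector with mean $\frac{1}{m}\sum_i v_i = \vec{0}$ and second moment $\mathbb{E}\|v_{b_\ell}\|^2 = \frac{1}{m}\sum_i\|v_i\|^2$. Expanding $\mathbb{E}\big\|\frac{1}{A}\sum_\ell v_{b_\ell}\big\|^2$, every cross term $\mathbb{E}\langle v_{b_\ell},v_{b_{\ell'}}\rangle$ with $\ell\ne\ell'$ factors as $\langle\mathbb{E}v_{b_\ell},\mathbb{E}v_{b_{\ell'}}\rangle=0$ by independence and zero mean, leaving only the $A$ diagonal terms and giving exactly $\frac{1}{Am}\sum_i\|v_i\|^2$.

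The only genuinely delicate point, I expect, is the joint moment $\mathbb{E}[\xi_i\xi_j]$ in the without-replacement case together with the subsequent simplification of the coefficient to $\frac{A(m-A)}{m(m-1)}$; this is precisely where the negative correlation of sampling without replacement produces the improvement over the with-replacement variance, and where the zero-sum hypothesis is essential for trading the inner products for $\sum_i\|v_i\|^2$. I would also note in passing that the $n$ in the second displayed bound should read $m$ to stay consistent with the ambient index set $[m]$.
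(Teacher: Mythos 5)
Your proof is correct and follows essentially the same route as the paper's: both compute the exact without-replacement second moment via pairwise inclusion probabilities (your $\mathbb{E}[\xi_i\xi_j]=A(A-1)/(m(m-1))$ is precisely the paper's combinatorial count $C_{n-1}^{A-1}(A-1)/(C_n^A(n-1))$), use the zero-sum hypothesis to collapse the cross terms onto $-\sum_i\|v_i\|^2$, arrive at the exact factor $\frac{A(m-A)}{m(m-1)}$ before bounding it by the indicator, and then handle the independent case by letting the zero mean annihilate the cross terms. Your closing remark is also apt: the paper's statement and proof do conflate $m$ and $n$, and the second display should indeed read $m$ throughout.
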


Based on Lemma \ref{VRNonCS-SCSG:Appendix:Lemma:Inequation-indepent}, we can obtain the 
inequality with two-variables ${\mathcal{D}_1}$ and ${\mathcal{D}_2}$, which are used for the 
gradient of $f(x)$ with the partial 
gradient 
$\partial G(x)$.
\begin{lemma}\label{VRNonCS-SCSG:Appendix:Lemma:Inequation-indepent-double}
	If $w_1,...,w_n\in \mathbb{R}^{M\times N}$ and $v_1,...,v_n\in \mathbb{R}^M$ satisfy 
	$(\frac{1}{{{n}}} \sum\nolimits_{i \in [n]} {{w_{i}}}  )^\mathsf{T}( 
	\frac{1}{{{n}}}
	\sum\nolimits_{j 
		\in [n]} {{v_{j}}}  ) = \bar w^\mathsf{T}\bar v$, and $\mathcal{D} = \left[ 
	{{\mathcal{D}_1},{\mathcal{D}_2}} \right]$ is a non-empty, 
	uniform random subset consist of ${{\mathcal{D}_1}} $ and $ {{\mathcal{D}_2}} $, which are 
	independently and 
	uniformly selected from $[n]$, $D=|\mathcal{D}_1|=|\mathcal{D}_2|$, then
	{
	\begin{align*}
	{\mathbb{E}_\mathcal{D}}{\left\| {\frac{1}{{\left| {{\mathcal{D}_1}} \right|\left| 
					{{\mathcal{D}_2}} \right|}}\left( 
			{\sum\nolimits_{d_1 \in {\mathcal{D}_1}} {{w_{d_1}}} } \right)^\mathsf{T}\left( 
			{\sum\nolimits_{d_2 
					\in 
					{\mathcal{D}_2}} {{v_{d_2}}} } 
			\right) - \bar w\bar v} \right\|^2} =& {\mathbb{E}_\mathcal{D}}{\left\| 
		{\frac{1}{D^2}\left( {\sum\nolimits_{\left[ 
					{{d_1},{d_2}} 
					\right] \in 
					{\cal D}} {\left( {({w_{{d_1}}})^\mathsf{T}{v_{{d_2}}} - \bar w^\mathsf{T}\bar 
						v} \right)} 
			} 
			\right)} \right\|^2}\\
	\le&\frac{{\mathbb{I}\left({D^2<n^2} \right)}}{D^2}\frac{1}{{{n^2}}}\sum\limits_{i,j 
		= 1}^n 
	{{{\left\| {({w_i})^\mathsf{T}{v_j} - \bar w^\mathsf{T}\bar v} \right\|}^2}}.
	\end{align*}}
\end{lemma}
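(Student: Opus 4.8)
The plan is to reduce the statement to the single-index variance bound of Lemma~\ref{VRNonCS-SCSG:Appendix:Lemma:Inequation-indepent} by regarding the pair $(d_1,d_2)$ as a single index ranging over the product set $[n]\times[n]$. I would first dispose of the equality, which is pure bookkeeping: expanding the product of the two sums gives $\frac{1}{D^2}(\sum_{d_1\in\mathcal{D}_1} w_{d_1})^\mathsf{T}(\sum_{d_2\in\mathcal{D}_2} v_{d_2}) = \frac{1}{D^2}\sum_{[d_1,d_2]\in\mathcal{D}} w_{d_1}^\mathsf{T} v_{d_2}$, and since $\mathcal{D}=\mathcal{D}_1\times\mathcal{D}_2$ contains exactly $D^2$ pairs, the constant $\bar w^\mathsf{T}\bar v$ can be written as $\frac{1}{D^2}\sum_{[d_1,d_2]\in\mathcal{D}}\bar w^\mathsf{T}\bar v$; subtracting term by term yields the claimed identity.

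For the inequality I would set $u_{ij}:=w_i^\mathsf{T} v_j-\bar w^\mathsf{T}\bar v$. The hypothesis $\bar w^\mathsf{T}\bar v=(\tfrac1n\sum_i w_i)^\mathsf{T}(\tfrac1n\sum_j v_j)$ forces $\sum_{i,j}u_{ij}=0$, so the family $\{u_{ij}\}$ indexed by $[n]^2$ is centred and the hypotheses of Lemma~\ref{VRNonCS-SCSG:Appendix:Lemma:Inequation-indepent} are met with $m=n^2$ and subset size $D^2$. Applying that lemma verbatim to $\mathcal{D}$ would produce exactly the stated right-hand side $\frac{\mathbb{I}(D^2<n^2)}{D^2}\frac{1}{n^2}\sum_{ij}\|u_{ij}\|^2$, which is the shortcut the statement seems to invite.

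The main obstacle, and the step I expect to be genuinely delicate, is that $\mathcal{D}=\mathcal{D}_1\times\mathcal{D}_2$ is \emph{not} a uniform random $D^2$-subset of $[n]^2$: its elements are strongly correlated, since once $d_1\in\mathcal{D}_1$ every pair $(d_1,d_2)$ with $d_2\in\mathcal{D}_2$ is included simultaneously. Hence Lemma~\ref{VRNonCS-SCSG:Appendix:Lemma:Inequation-indepent} cannot be invoked directly on the product index, and I would instead fall back on a direct second-moment computation via the tower property. Conditioning on $\mathcal{D}_1$, I would write the average as $W^\mathsf{T} V$ with $W=\frac1D\sum_{d_1}w_{d_1}$ and $V=\frac1D\sum_{d_2}v_{d_2}$, decompose $W=\bar w+\Delta_w$ and $V=\bar v+\Delta_v$, and use $\mathbb{E}\Delta_w=\mathbb{E}\Delta_v=0$ together with the independence of $\mathcal{D}_1,\mathcal{D}_2$ to annihilate the cross terms, leaving $\mathbb{E}\|W^\mathsf{T} V-\bar w^\mathsf{T}\bar v\|^2=\mathbb{E}\|\bar w^\mathsf{T}\Delta_v\|^2+\mathbb{E}\|\Delta_w^\mathsf{T}\bar v\|^2+\mathbb{E}\|\Delta_w^\mathsf{T}\Delta_v\|^2$. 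Each of these three expectations can be bounded by Lemma~\ref{VRNonCS-SCSG:Appendix:Lemma:Inequation-indepent} applied to a single average.

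It is precisely in reassembling these three pieces that I anticipate the real difficulty. The two \emph{linear} fluctuation terms $\mathbb{E}\|\bar w^\mathsf{T}\Delta_v\|^2$ and $\mathbb{E}\|\Delta_w^\mathsf{T}\bar v\|^2$ each involve only one of the two sums, so their variance contracts by a factor $1/D$, whereas only the bilinear term $\mathbb{E}\|\Delta_w^\mathsf{T}\Delta_v\|^2$ contracts by $1/D^2$. Matching this mixed $1/D$ and $1/D^2$ behaviour against the single prefactor $1/D^2$ in the target bound is the crux, and I would scrutinise carefully whether the uniform $1/D^2$ rate on \emph{all} terms is actually attainable or whether the linear contributions force a weaker $1/D$ dependence; resolving this discrepancy (and, if necessary, correcting the stated constant) is where I expect the substantive work to lie.
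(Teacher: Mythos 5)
Your closing worry is not a loose end to be tidied up --- it is decisive, and your instinct is correct: the inequality as stated is \emph{false}, precisely because the two linear fluctuation terms contract only at rate $1/D$. A two-line counterexample built from your own decomposition confirms it. Take $M=N=1$, $v_j\equiv 1$ (so $\Delta_v=0$, $\bar v=1$) and $w_i=\pm 1$ in equal numbers (so $\bar w=0$). Then $w_i^\mathsf{T}v_j-\bar w^\mathsf{T}\bar v=w_i$, the left-hand side collapses to $\mathbb{E}\|\Delta_w\|^2$, which equals $\tfrac{n-D}{D(n-1)}$ for a uniform $D$-subset (and $1/D$ for $D$ i.i.d.\ draws), while the right-hand side is $\mathbb{I}(D^2<n^2)/D^2$. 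For $n=4$, $D=2$ this reads $\tfrac13>\tfrac14$: no argument can rescue the stated $1/D^2$ rate. You also correctly refuse the only ``short'' proof available: $\mathcal{D}_1\times\mathcal{D}_2$ is not a uniform $D^2$-element subset of $[n]^2$, so Lemma \ref{VRNonCS-SCSG:Appendix:Lemma:Inequation-indepent} cannot be applied to the product index set. That invalid step is evidently what the paper has in mind --- this is the one lemma whose proof is absent from the appendix, the main text merely asserting it follows from Lemma \ref{VRNonCS-SCSG:Appendix:Lemma:Inequation-indepent} --- so there is no correct proof in the paper for you to match; your tower-property computation is the right route, and the paper's implicit argument is exactly the mistake you flagged.

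Carrying your computation through gives the corrected statement. With $a_i=(w_i-\bar w)^\mathsf{T}\bar v$, $b_j=\bar w^\mathsf{T}(v_j-\bar v)$, $c_{ij}=(w_i-\bar w)^\mathsf{T}(v_j-\bar v)$, all cross terms in the population sum vanish after summing over $i$ or $j$, so
\begin{align*}
\frac{1}{n^2}\sum_{i,j=1}^n\| {w_i^\mathsf{T}v_j-\bar w^\mathsf{T}\bar v} \|^2
=\frac1n\sum_{i=1}^n\|a_i\|^2+\frac1n\sum_{j=1}^n\|b_j\|^2+\frac{1}{n^2}\sum_{i,j=1}^n\|c_{ij}\|^2,
\end{align*}
while on the random side the cross terms vanish by independence and zero mean, and Lemma \ref{VRNonCS-SCSG:Appendix:Lemma:Inequation-indepent} (applied once to each linear term, and twice --- conditionally on $\mathcal{D}_1$ --- to the bilinear term) gives
\begin{align*}
\mathbb{E}_\mathcal{D}\| {\Delta_w^\mathsf{T}\bar v+\bar w^\mathsf{T}\Delta_v+\Delta_w^\mathsf{T}\Delta_v} \|^2
\le \frac{\mathbb{I}(D<n)}{D}\left(\frac1n\sum_i\|a_i\|^2+\frac1n\sum_j\|b_j\|^2\right)
+\frac{\mathbb{I}(D<n)^2}{D^2}\cdot\frac{1}{n^2}\sum_{i,j}\|c_{ij}\|^2
\le \frac{\mathbb{I}(D<n)}{D}\cdot\frac{1}{n^2}\sum_{i,j}\| {w_i^\mathsf{T}v_j-\bar w^\mathsf{T}\bar v} \|^2.
\end{align*}
So the correct prefactor is $\mathbb{I}(D<n)/D$, not $\mathbb{I}(D^2<n^2)/D^2$. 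This propagates downstream: every occurrence of $\mathbb{I}(D^2<n^2)H_2/D^2$ in Lemmas \ref{VRNonCS-SCSG:Lemma:Bound-estimate-G-3}, \ref{VRNonCS-SCSG:Lemma:Bound-estimate-G-4}, \ref{VRNonCS-SCSG-miniBatch:Lemma:Bound-estimate-G-4} and in the $V_1$, $W_1$ terms of Theorem \ref{VRNonCS-SCSG:Lemma:NewSequence} and Lemma \ref{VRNonCS-SCSG:Non:Lemma:NewSequence} should read $\mathbb{I}(D<n)H_2/D$. The final complexity claims survive, since the parameter choices already take $D=\mathcal{O}(1/\varepsilon)$ to control the $H_1/D$ term (and the proof of Corollary \ref{VRNonCS-SCSG:Lemma:paraeters setting} in effect already treats the $H_2$ term at the $1/D$ rate), but the lemma itself, and the bounds quoting it, need the correction you anticipated.
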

\subsection{Bounds analysis of the estimated function and the gradient} 
Here, we mainly give  different kinds of bounds for the proposed algorithm, such as  
${\mathbb{E}_{\mathcal{A},\mathcal{D}_1}}\| {\hat G}_k - 
G(x_k) \|^2$, 
$\mathbb{E}_{\mathcal{A,D}}\| E_{i_k,j_k}[ \nabla {\tilde f}_k ] - \nabla 
f(x_k) \|^2$ and $\mathbb{E}_{i_k,j_k,\cal A, \cal D}\| \nabla {\tilde f}_k - \nabla 
f( x_k 	) \|^2$. 
These bounds will be used to analyze the convergence rate and query complexity. We assume that 
these bound are all base on Assumption 
\ref{VRNonCS-SCSG:Assumption:G}-\ref{VRNonCS-SCSG:Assumption:Middle-Bound}. Parameters such as 
$B_G$, $B_F$, $L_G$, $L_F$ and $L_f$ in the bound are all from these Assumptions. We do not define 
the exact value of parameters such as $h$,  $A$  and $D$, which have great influence on the 
convergence and will be clearly defined in the query analysis. Our proposed bound are similar to 
that of \cite{lian2016finite}, \cite{liu2017duality} and \cite{liu2017variance}, but, the 
difference lies that there is an extra subset $\cal D$, which shows an interesting  phenomenon. 
That 
is when the subset  $\cal D$ is equal to the $[n]$, the corresponding bounds are the same as in 
\cite{lian2016finite}, \cite{liu2017duality} and \cite{liu2017variance}. However, it is the 
independent subset $\cal D$ that gives more general query complexity result for the problem 
(\ref{VRNonCS-SCSG:ProblemMainComposition}). The following bounds are all used for the composition 
problem for both convex and non-convex problem based on the Lemma 
\ref{VRNonCS-SCSG:Appendix:Lemma:Inequation-indepent} and Lemma 
\ref{VRNonCS-SCSG:Appendix:Lemma:Inequation-indepent-double}.  The more details of the proof can be 
referred to Appendix. For simplicity, we drop the 
superscript $i_k$, 
$j_k$, $\cal 
A$ and $\cal D$ for the expectation with  $\mathbb{E}$ in the proof.

\begin{lemma}\label{VRNonCS-SCSG:Lemma:Bound-estimate-G-2}
	Suppose Assumption 
	\ref{VRNonCS-SCSG:Assumption:G} and \ref{VRNonCS-SCSG:Assumption:Middle-Bound} holds, for 
	${{\hat G}_k}$ defined in (\ref{VRNonCS-SCSG:Definition:Estimate-G}) with $D =\left|{\cal 
		D}_1\right|$ 
	and  $A =\left|{\cal A}\right|$, 
	we have
	{
	\begin{align*}
	{\mathbb{E}_{\mathcal{A},\mathcal{D}_1}}{\| {{{\hat G}_k} - G({x_k})} \|^2} \le 
	4\left( {\frac{{\mathbb{I}\left( {A < n} \right)}}{A} + \frac{{\mathbb{I}\left( {D < n} 
				\right)}}{D}} 
	\right)B_G^2\mathbb{E}{\left\| {{x_k} - {{\tilde x}_s}} \right\|^2} + 
	2\frac{{\mathbb{I}\left( {D < 
				n} 
			\right)}}{D}{H_1}.
	\end{align*}}
\end{lemma}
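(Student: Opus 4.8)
The plan is to split the error $\hat G_k - G(x_k)$ into one part that depends only on the subset $\mathcal{A}$ and one part that depends only on the subset $\mathcal{D}_1$, then apply the variance-contraction inequality of Lemma~\ref{VRNonCS-SCSG:Appendix:Lemma:Inequation-indepent} to each part, using the bounded-Jacobian (hence Lipschitz) property from Assumption~\ref{VRNonCS-SCSG:Assumption:G} to produce the $B_G^2\|x_k-\tilde x_s\|^2$ terms and Assumption~\ref{VRNonCS-SCSG:Assumption:Middle-Bound} to produce the $H_1$ term. First, starting from the definition~(\ref{VRNonCS-SCSG:Definition:Estimate-G}) and inserting $\pm\big(G(x_k)-G(\tilde x_s)\big)$, I would write
\[
\hat G_k - G(x_k) = \underbrace{\big[(G_{\mathcal{A}}(x_k)-G_{\mathcal{A}}(\tilde x_s))-(G(x_k)-G(\tilde x_s))\big]}_{T_1} + \underbrace{\big[G_{\mathcal{D}_1}(\tilde x_s)-G(\tilde x_s)\big]}_{T_2},
\]
and then apply $\|a+b\|^2\le 2\|a\|^2+2\|b\|^2$ to get $\mathbb{E}\|\hat G_k-G(x_k)\|^2\le 2\mathbb{E}\|T_1\|^2+2\mathbb{E}\|T_2\|^2$; the factor $2$ here is the source of the leading constants in the stated bound.

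For $T_1$, I would set $v_j=(G_j(x_k)-G_j(\tilde x_s))-(G(x_k)-G(\tilde x_s))$, observe $\sum_{j}v_j=\vec 0$, and note $T_1=\frac{1}{A}\sum_{b\in\mathcal{A}}v_b$, so Lemma~\ref{VRNonCS-SCSG:Appendix:Lemma:Inequation-indepent} yields $\mathbb{E}_{\mathcal{A}}\|T_1\|^2\le \frac{\mathbb{I}(A<n)}{A}\frac{1}{n}\sum_j\|v_j\|^2$. Bounding the per-component term by $\|v_j\|^2\le 2\|G_j(x_k)-G_j(\tilde x_s)\|^2+2\|G(x_k)-G(\tilde x_s)\|^2$ and invoking the $B_G$-Lipschitz estimate from Assumption~\ref{VRNonCS-SCSG:Assumption:G} gives $\frac{1}{n}\sum_j\|v_j\|^2\le cB_G^2\|x_k-\tilde x_s\|^2$, hence a contribution of the form $\tfrac{\mathbb{I}(A<n)}{A}B_G^2\|x_k-\tilde x_s\|^2$. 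To recover \emph{both} the $\frac{\mathbb{I}(D<n)}{D}B_G^2\|x_k-\tilde x_s\|^2$ and the $\frac{\mathbb{I}(D<n)}{D}H_1$ contributions from the $\mathcal{D}_1$-part, I would re-center $T_2$ around $x_k$ as $T_2=\big[(G_{\mathcal{D}_1}(\tilde x_s)-G_{\mathcal{D}_1}(x_k))-(G(\tilde x_s)-G(x_k))\big]+\big[G_{\mathcal{D}_1}(x_k)-G(x_k)\big]$. Both brackets are mean-zero over $\mathcal{D}_1$, so Lemma~\ref{VRNonCS-SCSG:Appendix:Lemma:Inequation-indepent} applies termwise: the first bracket, bounded via the $G_j$-Lipschitzness of Assumption~\ref{VRNonCS-SCSG:Assumption:G}, gives $\tfrac{\mathbb{I}(D<n)}{D}B_G^2\|x_k-\tilde x_s\|^2$, while the second, bounded by Assumption~\ref{VRNonCS-SCSG:Assumption:Middle-Bound}, gives $\tfrac{\mathbb{I}(D<n)}{D}H_1$.

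Collecting the estimates, taking the total expectation over the inner trajectory, and using the independence of $\mathcal{A}$ and $\mathcal{D}_1$ to keep the two contractions separate, the bounds combine into the stated inequality with the explicit constants $4$ and $2$. The main point to get right is bookkeeping of the two independent subsets, so that each use of Lemma~\ref{VRNonCS-SCSG:Appendix:Lemma:Inequation-indepent} contracts by the correct factor ($1/A$ or $1/D$) and the indicator flags $\mathbb{I}(A<n)$, $\mathbb{I}(D<n)$ land on the right terms; I expect this, together with choosing the non-sharp Young splits above (rather than the sharper zero-cross-term identity, which would yield smaller constants), to be the delicate part of the argument.
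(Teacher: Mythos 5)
Your overall strategy --- decompose $\hat G_k - G(x_k)$ into mean-zero pieces, apply Lemma~\ref{VRNonCS-SCSG:Appendix:Lemma:Inequation-indepent} to each, and convert the resulting averages into $B_G^2\|x_k-\tilde x_s\|^2$ and $H_1$ terms via Assumptions~\ref{VRNonCS-SCSG:Assumption:G} and~\ref{VRNonCS-SCSG:Assumption:Middle-Bound} --- is the same as the paper's, but your constant bookkeeping does not come out as you claim, and in a way that matters: the inequality your steps actually yield is \emph{weaker} than the stated one. Two places go wrong. First, by re-centering $T_2$ around $x_k$ you place the variance term $G_{\mathcal{D}_1}(x_k)-G(x_k)$ at the \emph{second} level of nested $\|a+b\|^2\le 2\|a\|^2+2\|b\|^2$ splits, so it picks up a factor $2\cdot 2=4$: your route gives $4\,\mathbb{I}(D<n)H_1/D$, whereas the lemma asserts $2\,\mathbb{I}(D<n)H_1/D$. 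Second, your Young split $\|v_j\|^2\le 2\|G_j(x_k)-G_j(\tilde x_s)\|^2+2\|G(x_k)-G(\tilde x_s)\|^2$ gives $\frac{1}{n}\sum_j\|v_j\|^2\le 4B_G^2\|x_k-\tilde x_s\|^2$, hence a $T_1$ contribution of $2\cdot 4=8$ times $\mathbb{I}(A<n)B_G^2/A$, overshooting the stated factor $4$ (and giving $16$ on the $\mathcal{D}_1$ Lipschitz term). Your closing remark has it backwards: the non-sharp Young splits are not what produces the constants $4$ and $2$; they destroy them. The correct per-component step is the variance-versus-second-moment inequality $\frac{1}{n}\sum_j\|v_j\|^2\le\frac{1}{n}\sum_j\|G_j(x_k)-G_j(\tilde x_s)\|^2\le B_G^2\|x_k-\tilde x_s\|^2$, i.e.\ exactly the ``sharper zero-cross-term'' bound you decided to avoid.

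The paper obtains the stated constants by ordering the decomposition the other way around: first split
\begin{align*}
\mathbb{E}\|\hat G_k-G(x_k)\|^2\le 2\,\mathbb{E}\|\hat G_k-G_{\mathcal{D}_1}(x_k)\|^2+2\,\mathbb{E}\|G_{\mathcal{D}_1}(x_k)-G(x_k)\|^2,
\end{align*}
so that the variance term sits at the first level and contributes $2\,\mathbb{I}(D<n)H_1/D$; then split $\hat G_k-G_{\mathcal{D}_1}(x_k)$ into the $\mathcal{A}$-centered and $\mathcal{D}_1$-centered Lipschitz differences, each of which therefore carries the factor $4$, and bound each one by Lemma~\ref{VRNonCS-SCSG:Appendix:Lemma:Inequation-indepent} followed by the second-moment bound above. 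Alternatively, your own first decomposition already suffices if you simply do \emph{not} re-center: $\mathbb{E}\|T_2\|^2\le \mathbb{I}(D<n)H_1/D$ follows directly from Lemma~\ref{VRNonCS-SCSG:Appendix:Lemma:Inequation-indepent} and Assumption~\ref{VRNonCS-SCSG:Assumption:Middle-Bound} applied at $\tilde x_s$, and the sharp bound on $T_1$ gives $2\,\mathbb{I}(A<n)B_G^2\mathbb{E}\|x_k-\tilde x_s\|^2/A$; the resulting total $2\,\mathbb{I}(A<n)B_G^2\mathbb{E}\|x_k-\tilde x_s\|^2/A+2\,\mathbb{I}(D<n)H_1/D$ is even sharper than the lemma's right-hand side and hence implies it, since the missing $4\,\mathbb{I}(D<n)B_G^2\mathbb{E}\|x_k-\tilde x_s\|^2/D$ term is nonnegative.
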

\begin{lemma}\label{VRNonCS-SCSG:Lemma:Bound-estimate-G-3}
	Suppose 
	Assumption 
	\ref{VRNonCS-SCSG:Assumption:G}, \ref{VRNonCS-SCSG:Assumption:F}, 
	\ref{VRNonCS-SCSG:AssumptionIndependent} and
	\ref{VRNonCS-SCSG:Assumption:Middle-Bound} holds, for ${{\hat G}_k}$ defined in 
	(\ref{VRNonCS-SCSG:Definition:Estimate-G}) and ${\nabla {{\tilde f}_k}}$ defined in 
	(\ref{VRNonCS-SCSG:Definition:Estimate-f}) with $\mathcal{D} = \left[ 
	{{\mathcal{D}_1},{\mathcal{D}_2}} \right]$ and $D =\left|{\cal D}_1\right|=\left|{\cal 
		D}_2\right|$, we have
	{
	\begin{align*}
	{\mathbb{E}_{\mathcal{A,D}}}{\| {{\mathbb{E}_{i_k,j_k}}[ {\nabla {{\tilde f}_k}} 
			] - \nabla 
			f({x_k})} 
		\|^2}
	\le& 4B_G^4L_F^2\left( {4\frac{{\mathbb{I}\left( {A < n} \right)}}{A} + 
		4\frac{{\mathbb{I}\left( {D < n} 
				\right)}}{D}} \right)\mathbb{E}{\left\| {{x_k} - {{\tilde x}_s}} \right\|^2}\\& + 
	16B_G^2L_F^2\frac{{\mathbb{I}(D 
			< n)}}{D}{H_1} + 4\frac{{\mathbb{I}(D^2 < n^2)}}{{D^2}}{H_2}.
	\end{align*}}
\end{lemma}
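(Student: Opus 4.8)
The plan is to integrate out the inner indices $i_k,j_k$ first and then reduce everything to the two sampling lemmas already established. Conditioning on $\mathcal{A}$ and $\mathcal{D}$ fixes $\hat G_k$, $x_k$, $\tilde x_s$ and the quantity $w:=G_{\mathcal{D}_1}(\tilde x_s)$, so applying Assumption~\ref{VRNonCS-SCSG:AssumptionIndependent} term by term to the two stochastic terms of (\ref{VRNonCS-SCSG:Definition:Estimate-f}) yields the closed form
\begin{align*}
\mathbb{E}_{i_k,j_k}[\nabla \tilde f_k] = (\partial G(x_k))^\mathsf{T}\nabla F(\hat G_k) - (\partial G(\tilde x_s))^\mathsf{T}\nabla F(w) + (\partial G_{\mathcal{D}_1}(\tilde x_s))^\mathsf{T}\nabla F_{\mathcal{D}_2}(w).
\end{align*}
Subtracting $\nabla f(x_k)=(\partial G(x_k))^\mathsf{T}\nabla F(G(x_k))$ leaves a ``$\hat G_k$-error'' part $(\partial G(x_k))^\mathsf{T}(\nabla F(\hat G_k)-\nabla F(G(x_k)))$ together with the snapshot discrepancy $(\partial G_{\mathcal{D}_1}(\tilde x_s))^\mathsf{T}\nabla F_{\mathcal{D}_2}(w)-(\partial G(\tilde x_s))^\mathsf{T}\nabla F(w)$.

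The first part is handled immediately: by the bounded Jacobian of Assumption~\ref{VRNonCS-SCSG:Assumption:G} and the $L_F$-smoothness of Assumption~\ref{VRNonCS-SCSG:Assumption:F}, its squared norm is at most $B_G^2L_F^2\|\hat G_k-G(x_k)\|^2$, and Lemma~\ref{VRNonCS-SCSG:Lemma:Bound-estimate-G-2} converts this into the $\mathbb{E}\|x_k-\tilde x_s\|^2$ term plus one $\mathbb{I}(D<n)/D\cdot H_1$ term. The snapshot discrepancy is the delicate piece, and I expect it to be the main obstacle: the point $w=G_{\mathcal{D}_1}(\tilde x_s)$ at which the gradient of $F$ is evaluated depends on the very subset $\mathcal{D}_1$ that also appears in the Jacobian average $\partial G_{\mathcal{D}_1}(\tilde x_s)$, so the double-sampling Lemma~\ref{VRNonCS-SCSG:Appendix:Lemma:Inequation-indepent-double}, which presumes fixed vectors $w_i,v_j$, cannot be applied directly.

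To break this coupling I would re-center the inner argument at the deterministic value $G(\tilde x_s)$, writing the discrepancy as the sum of the re-centered term $(\partial G_{\mathcal{D}_1}(\tilde x_s))^\mathsf{T}\nabla F_{\mathcal{D}_2}(G(\tilde x_s))-(\partial G(\tilde x_s))^\mathsf{T}\nabla F(G(\tilde x_s))$ and the two smoothness residuals $(\partial G_{\mathcal{D}_1}(\tilde x_s))^\mathsf{T}(\nabla F_{\mathcal{D}_2}(w)-\nabla F_{\mathcal{D}_2}(G(\tilde x_s)))$ and $-(\partial G(\tilde x_s))^\mathsf{T}(\nabla F(w)-\nabla F(G(\tilde x_s)))$. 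Together with the $\hat G_k$-error part this is a four-term split, and the elementary inequality $\|\sum_{\ell=1}^{4}a_\ell\|^2\le 4\sum_{\ell=1}^{4}\|a_\ell\|^2$ reduces the task to bounding each piece. On the re-centered term the vectors $w_i=\partial G_i(\tilde x_s)$ and $v_j=\nabla F_j(G(\tilde x_s))$ are genuinely fixed, so Lemma~\ref{VRNonCS-SCSG:Appendix:Lemma:Inequation-indepent-double} together with Assumption~\ref{VRNonCS-SCSG:Assumption:Middle-Bound} (taking $x=\tilde x_s$, $y=G(\tilde x_s)$) produces exactly the $\mathbb{I}(D^2<n^2)/D^2\cdot H_2$ term. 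Each smoothness residual is at most $B_G^2L_F^2\|w-G(\tilde x_s)\|^2=B_G^2L_F^2\|G_{\mathcal{D}_1}(\tilde x_s)-G(\tilde x_s)\|^2$ by Assumptions~\ref{VRNonCS-SCSG:Assumption:G}--\ref{VRNonCS-SCSG:Assumption:F}, and Lemma~\ref{VRNonCS-SCSG:Appendix:Lemma:Inequation-indepent} applied to the mean-zero vectors $G_i(\tilde x_s)-G(\tilde x_s)$ bounds its expectation by $\mathbb{I}(D<n)/D\cdot H_1$.

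Collecting the four contributions and tracking the factor $4$ reproduces the stated bound: the $\mathbb{E}\|x_k-\tilde x_s\|^2$ coefficient becomes $16B_G^4L_F^2=4B_G^4L_F^2\cdot 4$ on $\mathbb{I}(A<n)/A+\mathbb{I}(D<n)/D$; the $H_1$ pieces ($2$ units from the $\hat G_k$-error part and $1$ each from the two residuals, times $4$) aggregate to $16B_G^2L_F^2\,\mathbb{I}(D<n)/D$; and the single variance term gives $4\,\mathbb{I}(D^2<n^2)/D^2\cdot H_2$. The only real difficulty is the $\mathcal{D}_1$-coupling flagged above; once the argument of $F$ is frozen at $G(\tilde x_s)$ the estimate decouples cleanly into the two sampling lemmas, and the remaining care is simply to keep the indicators $\mathbb{I}(A<n)$, $\mathbb{I}(D<n)$ and $\mathbb{I}(D^2<n^2)$ correctly attached according to whether $\mathcal{A},\mathcal{D}_1,\mathcal{D}_2$ exhaust $[n]$.
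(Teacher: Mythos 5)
Your proposal is correct and follows essentially the same route as the paper's own proof: the paper performs exactly your four-way add-and-subtract (the $\hat G_k$-error term, the two smoothness residuals obtained by re-centering the inner argument at $G(\tilde x_s)$, and the re-centered double-sampling term), then invokes Lemma~\ref{VRNonCS-SCSG:Lemma:Bound-estimate-G-2}, the single-sampling bound with Assumption~\ref{VRNonCS-SCSG:Assumption:Middle-Bound}, and Lemma~\ref{VRNonCS-SCSG:Appendix:Lemma:Inequation-indepent-double} on the respective pieces, with the same factor of $4$ and the same constant accounting ($8+8$ units of $B_G^2L_F^2\,\mathbb{I}(D<n)H_1/D$). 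The coupling issue you flag — that $w=G_{\mathcal{D}_1}(\tilde x_s)$ depends on the same subset appearing in the Jacobian average — is resolved in the paper by precisely the re-centering you propose.
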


\begin{lemma}\label{VRNonCS-SCSG:Lemma:Bound-estimate-G-4}
	Suppose 
	Assumption \ref{VRNonCS-SCSG:Assumption:G}-\ref{VRNonCS-SCSG:Assumption:Middle-Bound} holds, 
	for 
	${{\hat 
			G}_k}$ defined in (\ref{VRNonCS-SCSG:Definition:Estimate-G}) and ${\nabla {{\tilde 
				f}_k}}$ defined in (\ref{VRNonCS-SCSG:Definition:Estimate-f}) with $\mathcal{D} = 
	\left[ 
	{{\mathcal{D}_1},{\mathcal{D}_2}} \right]$ and $D =\left|{\cal 
		D}_1\right|=\left|{\cal 
		D}_2\right|$, we have
	{
	\begin{align*}
	\mathbb{E}_{i_k,j_k,\cal A, \cal D}{\| {\nabla {{\tilde f}_k} - \nabla f\left( {{x_k}} 
			\right)} \|^2}
	\le& 5B_G^4L_F^2\left( 
	{\frac{L_f^2}{B_G^4L_F^2} + 
		4\frac{{\mathbb{I}\left( {A < n} \right)}}{A} + 4\frac{{\mathbb{I}\left( {D < n} 
				\right)}}{D}} \right)\mathbb{E}{\left\| {{x_k} - {{\tilde x}_s}} \right\|^2}\\& + 
	20B_G^2L_F^2\frac{{\mathbb{I}(D < 
			n)}}{D}{H_1} + 5\frac{{\mathbb{I}(D^2 < n^2)}}{{D^2}}{H_2}.
	\end{align*}}
\end{lemma}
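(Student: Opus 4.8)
The plan is to bound $\nabla\tilde f_k-\nabla f(x_k)$ through a single telescoping decomposition into \emph{five} terms and then apply $\|\sum_{i=1}^{5}u_i\|^2\le 5\sum_{i=1}^{5}\|u_i\|^2$; this is exactly what produces the overall factor $5$ in the statement, and it is cleaner than an exact bias--variance split (which here yields strictly worse constants, and in particular cannot produce the $H_2$ coefficient $5$). First I would insert, as intermediate reference points, the exact stochastic gradients evaluated at the \emph{true} inner value at both iterates, namely $(\partial G_{j_k}(x_k))^\mathsf{T}\nabla F_{i_k}(G(x_k))$ and $(\partial G_{j_k}(\tilde x_s))^\mathsf{T}\nabla F_{i_k}(G(\tilde x_s))$, together with the true full gradient $\nabla f(\tilde x_s)=(\partial G(\tilde x_s))^\mathsf{T}\nabla F(G(\tilde x_s))$ and the intermediate snapshot quantity $(\partial G_{\mathcal{D}_1}(\tilde x_s))^\mathsf{T}\nabla F_{\mathcal{D}_2}(G(\tilde x_s))$. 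Recalling $\nabla f(x_k)=(\partial G(x_k))^\mathsf{T}\nabla F(G(x_k))$ together with the definitions (\ref{VRNonCS-SCSG:Definition:Estimate-G}) and (\ref{VRNonCS-SCSG:Definition:Estimate-f}), I would write $\nabla\tilde f_k-\nabla f(x_k)=\sum_{i=1}^5 W_i$ with
\begin{align*}
W_1 &= (\partial G_{j_k}(x_k))^\mathsf{T}\left[\nabla F_{i_k}(\hat G_k) - \nabla F_{i_k}(G(x_k))\right],\\
W_2 &= (\partial G_{j_k}(x_k))^\mathsf{T}\nabla F_{i_k}(G(x_k)) - (\partial G_{j_k}(\tilde x_s))^\mathsf{T}\nabla F_{i_k}(G(\tilde x_s)) - \nabla f(x_k) + \nabla f(\tilde x_s),\\
W_3 &= (\partial G_{j_k}(\tilde x_s))^\mathsf{T}\left[\nabla F_{i_k}(G(\tilde x_s)) - \nabla F_{i_k}(G_{\mathcal{D}_1}(\tilde x_s))\right],\\
W_4 &= (\partial G_{\mathcal{D}_1}(\tilde x_s))^\mathsf{T}\left[\nabla F_{\mathcal{D}_2}(G_{\mathcal{D}_1}(\tilde x_s)) - \nabla F_{\mathcal{D}_2}(G(\tilde x_s))\right],\\
W_5 &= (\partial G_{\mathcal{D}_1}(\tilde x_s))^\mathsf{T}\nabla F_{\mathcal{D}_2}(G(\tilde x_s)) - \nabla f(\tilde x_s),
\end{align*}
which one checks telescopes exactly back to $\nabla\tilde f_k-\nabla f(x_k)$.

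Having fixed the decomposition, I would bound each term separately. For $W_1$ and $W_3$, factoring out the Jacobian via $\|\partial G_{j_k}\|\le B_G$ (Assumption \ref{VRNonCS-SCSG:Assumption:G}) and using the $L_F$-smoothness of $F_{i_k}$ (Assumption \ref{VRNonCS-SCSG:Assumption:F}) reduces them to $B_G^2L_F^2\,\mathbb{E}\|\hat G_k-G(x_k)\|^2$ and $B_G^2L_F^2\,\mathbb{E}\|G(\tilde x_s)-G_{\mathcal{D}_1}(\tilde x_s)\|^2$; the former is \emph{exactly} Lemma \ref{VRNonCS-SCSG:Lemma:Bound-estimate-G-2}, and the latter follows from Lemma \ref{VRNonCS-SCSG:Appendix:Lemma:Inequation-indepent} applied to $v_i=G_i(\tilde x_s)-G(\tilde x_s)$, giving $\tfrac{\mathbb{I}(D<n)}{D}H_1$. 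The term $W_4$ is bounded identically to $W_3$, contributing another $\tfrac{\mathbb{I}(D<n)}{D}H_1$ factor. The crucial term is $W_2$: by Assumption \ref{VRNonCS-SCSG:AssumptionIndependent} its $(i_k,j_k)$-conditional mean is zero, so $\mathbb{E}_{i_k,j_k}\|W_2\|^2$ is a variance, at most the second moment $\mathbb{E}_{i_k,j_k}\|(\partial G_{j_k}(x_k))^\mathsf{T}\nabla F_{i_k}(G(x_k)) - (\partial G_{j_k}(\tilde x_s))^\mathsf{T}\nabla F_{i_k}(G(\tilde x_s))\|^2$, which Assumption \ref{VRNonCS-SCSG:Assumption:GF} bounds by $L_f^2\|x_k-\tilde x_s\|^2$ — this is precisely where the $L_f^2/(B_G^4L_F^2)$ contribution enters. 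Finally $W_5$ is the double-subset deviation of the snapshot gradient, handled by Lemma \ref{VRNonCS-SCSG:Appendix:Lemma:Inequation-indepent-double} with $w_{d}=\partial G_{d}(\tilde x_s)$ and $v_{d}=\nabla F_{d}(G(\tilde x_s))$ together with Assumption \ref{VRNonCS-SCSG:Assumption:Middle-Bound}, yielding $\tfrac{\mathbb{I}(D^2<n^2)}{D^2}H_2$.

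Summing the five bounds and multiplying by $5$ then reproduces the stated coefficients: the $\|x_k-\tilde x_s\|^2$ contributions collect to $5L_f^2+20B_G^4L_F^2(\tfrac{\mathbb{I}(A<n)}{A}+\tfrac{\mathbb{I}(D<n)}{D})$, the three $H_1$ contributions (from $W_1,W_3,W_4$) to $20B_G^2L_F^2\tfrac{\mathbb{I}(D<n)}{D}$, and the single $H_2$ contribution to $5\tfrac{\mathbb{I}(D^2<n^2)}{D^2}$, which matches the claim after rewriting the first group as $5B_G^4L_F^2\big(\tfrac{L_f^2}{B_G^4L_F^2}+4\tfrac{\mathbb{I}(A<n)}{A}+4\tfrac{\mathbb{I}(D<n)}{D}\big)$.

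The main obstacle is choosing the decomposition so that the four ``smoothness/variance-reduction'' terms $W_1,W_3,W_4,W_5$ reduce cleanly to already-established inner-function estimates while the single ``SVRG'' term $W_2$ is genuinely $(i_k,j_k)$-centered; this centering is what allows Assumption \ref{VRNonCS-SCSG:Assumption:GF} to be applied to a second moment rather than a looser triangle-inequality bound, and it is the only place the composite Lipschitz constant $L_f$ is consumed. A secondary point requiring care is verifying that the mutual independence of $\mathcal{A}$, $\mathcal{D}_1$, $\mathcal{D}_2$ and $(i_k,j_k)$ lets the nested expectations factor correctly, so that Lemmas \ref{VRNonCS-SCSG:Appendix:Lemma:Inequation-indepent} and \ref{VRNonCS-SCSG:Appendix:Lemma:Inequation-indepent-double} may be invoked on the appropriately centered sums.
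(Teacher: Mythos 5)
Your proof is correct and takes essentially the same route as the paper: your $W_1,\dots,W_5$ are exactly the paper's five-term decomposition (listed in a different order), with the same tools applied to each piece — the centered term $W_2$ handled via $\mathbb{E}\|X-\mathbb{E}[X]\|^2\le\mathbb{E}\|X\|^2$ and Assumption \ref{VRNonCS-SCSG:Assumption:GF}, $W_1$ via Lemma \ref{VRNonCS-SCSG:Lemma:Bound-estimate-G-2}, $W_3,W_4$ via the $H_1$ variance bound, and $W_5$ via Lemma \ref{VRNonCS-SCSG:Appendix:Lemma:Inequation-indepent-double}. Your final constant accounting ($5L_f^2+20B_G^4L_F^2(\cdot)$ for the $\|x_k-\tilde x_s\|^2$ term, $20B_G^2L_F^2$ for $H_1$, and $5$ for $H_2$) matches the paper's exactly.
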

As can be seen from the above results directly, when $A$ and $D$ increase, the upper bounds are 
more approximating the related bounds as in \cite{lian2016finite,liu2017duality,liu2017variance}. 
Though there are extra terms with respect to $A$ and $D$, it gives us another direction for 
analyzing the convergence rate and query complexity. As the convergence rate not only depends on 
the 
convergence sequence, but also the terms including the event function $\mathbb{I}$. Thus, we can 
obtain the lower bound range of $A$ and $D$ that is related to $\varepsilon$.
The result in Lemma \ref{VRNonCS-SCSG:Lemma:Bound-estimate-G-3} and 
\ref{VRNonCS-SCSG:Lemma:Bound-estimate-G-4} are similar except the extra term 
$L_f^2\mathbb{E}||x_k-\tilde{x}_s||^2$. This is due to the fact that the order of the expectation 
is different. This difference derives from the proof process by using the smoothness of the 
function $f(x)$ and the update of $x$ in Algorithm \ref{VRNonCS-SCSG:AlgorithmI}. Furthermore, 
these two lemmas can be both applied to analyze the convergence rate and query complexity of  the 
convex and non-convex composition problem.
\section{Stochastic Composition via SCSG method for the Convex Composition problem} 
\label{VRNonCS-SCSG:Section:SCSG-convex}
In this section, we analyze the proposed method for the convex composition problem. We first 
present the convergence of the proposed algorithm, and then give the query complexity. Thought the 
proof  is similar to that of \cite{lian2016finite} and \cite{xiao2014proximal}, we present a more 
clear and simple process as there is an extra term deriving from the subset $\cal D$. In order to 
ensure the convergence of the proposed algorithm, we obtain the desired parameters' setting, such 
as $A$, $D$, $K$, 
$\eta$ and $h$. Based on the setting, we can obtain the corresponding query complexity, which is 
better than or equal to the SVRG-based method in \cite{lian2016finite} and \cite{liu2017duality}. 
This is in fact that the event function $\mathbb{I}$ has the influence on the size of $A$ and $D$.
\subsection{Convergence  analysis for the convex problem}
Based on the strong convex and smoothness of the function of $f(x)$, we provide the convergence 
sequence, in which the parameters are not defined. But the sequences motivate us to consider the 
parameters' setting such that lead to the desired convergence rate.
\begin{theorem}\label{VRNonCS-SCSG:Lemma:NewSequence}
	Suppose Assumption \ref{VRNonCS-SCSG:Assumption:f-strong}-
	\ref{VRNonCS-SCSG:Assumption:Middle-Bound} holds, in Algorithm \ref{VRNonCS-SCSG:AlgorithmI}, 
	let $h>0, \eta>0$, $A=|\cal A|$,  $D=|\mathcal{D}_1|=|\mathcal{D}_2|$, $K$ is the number of the 
	inner iteration, $x^*$ is the optimal point, we have
	\begin{center}
		${\rho _1}\mathbb{E}{\left\| {{{\tilde x}_{s + 1}} - {x^*}} \right\|^2} \le \left( 
		{\frac{1}{K} 
			+ {\rho 
				_2}} \right)\mathbb{E}{\left\| {{{\tilde x}_s} - {x^*}} \right\|^2} +\rho_3,$
	\end{center}
%
	where
	{
	\begin{align}
	\label{VRNonCS-SCSG:Lemma:NewSequence-V}
	V =& B_G^4L_F^2\left( {4\frac{{\mathbb{I}\left( {A < n} \right)}}{A} + 4\frac{{\mathbb{I}\left( 
				{D < n} 
				\right)}}{D}} \right),\\
	\label{VRNonCS-SCSG:Lemma:NewSequence-V1}
	{V_1} =& 20B_G^2L_F^2\frac{{\mathbb{I}(D < n)}}{D}{H_1} + 5\frac{{\mathbb{I}({D^2} < 
			{n^2})}}{{{D^2}}}{H_2},\\
	\label{VRNonCS-SCSG:Lemma:NewSequence-r1}
	{\rho _1} =& \left( {2\mu  - h - 4V\frac{1}{h} - \left( {12L_f^2 + 10V} \right)\eta } 
	\right)\eta, \\
	\label{VRNonCS-SCSG:Lemma:NewSequence-r2}
	{\rho _2} =& 2\left( {2V\frac{1}{h} + 5\left( {L_f^2 + V} \right)\eta } \right)\eta,\\
	\label{VRNonCS-SCSG:Lemma:NewSequence-r3}
	{\rho _3} =& \frac{1}{h}\eta {\frac{4}{5}{V_1}} 
	+ 2{\eta ^2}{V_1}.
	\end{align}}
\end{theorem}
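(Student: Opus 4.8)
The plan is to run the standard SVRG-type potential argument on $\|x_{k+1}-x^*\|^2$ within a single epoch (dropping the superscript $s$, with $x_0=\tilde x_s$ and $x_K=\tilde x_{s+1}$), while carefully absorbing the extra error coming from the \emph{biased} estimator $\nabla\tilde f_k$ and from the two independent subsets $\mathcal A,\mathcal D$. Starting from the update $x_{k+1}=x_k-\eta\nabla\tilde f_k$, I would expand
\begin{align*}
\|x_{k+1}-x^*\|^2 = \|x_k-x^*\|^2 - 2\eta\langle \nabla\tilde f_k, x_k-x^*\rangle + \eta^2\|\nabla\tilde f_k\|^2,
\end{align*}
and take the conditional expectation over $i_k,j_k$. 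Because the estimator is biased, the inner-product term does \emph{not} collapse to $\langle\nabla f(x_k),x_k-x^*\rangle$; instead I would split it as $\langle\nabla f(x_k),x_k-x^*\rangle + \langle \mathbb{E}_{i_k,j_k}[\nabla\tilde f_k]-\nabla f(x_k), x_k-x^*\rangle$. The first piece is controlled by the strong-convexity bullet of Assumption \ref{VRNonCS-SCSG:Assumption:f-strong}, yielding $-2\eta\mu\,\mathbb{E}\|x_k-x^*\|^2$; the second (bias) piece is split by Young's inequality with the free parameter $h$ into $\eta h\|x_k-x^*\|^2 + \tfrac{\eta}{h}\|\mathbb{E}_{i_k,j_k}[\nabla\tilde f_k]-\nabla f(x_k)\|^2$.

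Next I would feed in the two bound lemmas. Taking the outer expectation over $\mathcal A,\mathcal D$ and applying Lemma \ref{VRNonCS-SCSG:Lemma:Bound-estimate-G-3} to the bias term produces exactly $4V\,\mathbb{E}\|x_k-\tilde x_s\|^2+\tfrac45 V_1$, which is where the groupings $V$, $V_1$ of (\ref{VRNonCS-SCSG:Lemma:NewSequence-V})--(\ref{VRNonCS-SCSG:Lemma:NewSequence-V1}) are read off. For the quadratic term I would write $\|\nabla\tilde f_k\|^2\le 2\|\nabla\tilde f_k-\nabla f(x_k)\|^2+2\|\nabla f(x_k)\|^2$, bound the first summand by Lemma \ref{VRNonCS-SCSG:Lemma:Bound-estimate-G-4} (giving $5(L_f^2+V)\mathbb{E}\|x_k-\tilde x_s\|^2+V_1$), and the second by $L_f$-smoothness of $f$ together with $\nabla f(x^*)=0$, i.e.\ $\|\nabla f(x_k)\|^2\le L_f^2\|x_k-x^*\|^2$ via Assumption \ref{VRNonCS-SCSG:Assumption:GF}. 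Collecting the coefficients of $\mathbb{E}\|x_k-x^*\|^2$, of $\mathbb{E}\|x_k-\tilde x_s\|^2$, and of the constant gives a per-step recursion whose cross-term coefficient is $\rho_2$ of (\ref{VRNonCS-SCSG:Lemma:NewSequence-r2}) and whose additive constant is $\rho_3$ of (\ref{VRNonCS-SCSG:Lemma:NewSequence-r3}); a short algebraic check confirms $2\eta\mu-\eta h-2\eta^2 L_f^2=\rho_1+\rho_2$, so the contraction factor on $\mathbb{E}\|x_k-x^*\|^2$ is exactly $1-(\rho_1+\rho_2)$.

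Finally I would sum this per-step inequality over $k=0,\dots,K-1$, telescoping $\mathbb{E}\|x_k-x^*\|^2-\mathbb{E}\|x_{k+1}-x^*\|^2$ and using $x_0=\tilde x_s$, $x_K=\tilde x_{s+1}$; the residual cross terms $\mathbb{E}\|x_k-\tilde x_s\|^2$ are handled by $\|x_k-\tilde x_s\|^2\le 2\|x_k-x^*\|^2+2\|\tilde x_s-x^*\|^2$, and dividing the telescoped sum by $K$ surfaces the $\tfrac1K\|\tilde x_s-x^*\|^2$ factor and leaves $\rho_1$ as the multiplier of $\mathbb{E}\|\tilde x_{s+1}-x^*\|^2$. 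I expect the main obstacle to be the bookkeeping rather than any single inequality: keeping the indicators $\mathbb{I}(A<n),\mathbb{I}(D<n),\mathbb{I}(D^2<n^2)$ bundled correctly inside $V,V_1$, ensuring the bias term is estimated by Lemma \ref{VRNonCS-SCSG:Lemma:Bound-estimate-G-3} (not Lemma \ref{VRNonCS-SCSG:Lemma:Bound-estimate-G-4}, whose extra $L_f^2$ term belongs only to the quadratic part), and aligning the factors of two introduced by Young's inequality and by the $a+b$ splitting so that the final coefficients reproduce $\rho_1,\rho_2,\rho_3$ verbatim.
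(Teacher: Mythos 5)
Your route is, step for step, the paper's own proof: expand $\|x_{k+1}-x^*\|^2$ under the update, split the inner product into the true-gradient part (strong convexity, Assumption \ref{VRNonCS-SCSG:Assumption:f-strong}) and the bias part (Young's inequality with the free parameter $h$, then Lemma \ref{VRNonCS-SCSG:Lemma:Bound-estimate-G-3}), bound $\|\nabla\tilde f_k\|^2$ by $2\|\nabla\tilde f_k-\nabla f(x_k)\|^2+2\|\nabla f(x_k)\|^2$ using Lemma \ref{VRNonCS-SCSG:Lemma:Bound-estimate-G-4} and $\|\nabla f(x_k)\|^2\le L_f^2\|x_k-x^*\|^2$, then sum and telescope over the epoch. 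Your per-step bookkeeping is correct, including the identity $\rho_1+\rho_2=2\eta\mu-\eta h-2\eta^2L_f^2$; one indeed arrives at
\begin{align*}
\mathbb{E}\|x_{k+1}-x^*\|^2 \le \bigl(1-(\rho_1+\rho_2)\bigr)\mathbb{E}\|x_k-x^*\|^2+\rho_2\,\mathbb{E}\|x_k-\tilde x_s\|^2+\rho_3 ,
\end{align*}
which coincides with the paper's intermediate inequality before its cross-term elimination.

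The gap is in the final assembly, and it is a genuine one. Eliminating the cross term via $\|x_k-\tilde x_s\|^2\le 2\|x_k-x^*\|^2+2\|\tilde x_s-x^*\|^2$ turns the coefficient $\rho_2$ into $2\rho_2$ on \emph{both} resulting terms, so after telescoping, invoking $x_0=\tilde x_s$, and dividing by $K$ you can only conclude
\begin{align*}
(\rho_1-\rho_2)\,\mathbb{E}\|\tilde x_{s+1}-x^*\|^2 \le \Bigl(\frac{1}{K}+2\rho_2\Bigr)\mathbb{E}\|\tilde x_s-x^*\|^2+\rho_3 ,
\end{align*}
not the stated inequality with $\rho_1$ on the left and $\frac{1}{K}+\rho_2$ on the right. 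Hence your promise to reproduce $\rho_1,\rho_2,\rho_3$ ``verbatim'' cannot be fulfilled by correct algebra; it only comes out if the factors of $2$ from this substitution are silently dropped. To be fair, the paper's own proof does exactly that: in its last displayed inequality before summation it adds $\rho_2$ rather than $2\rho_2$ to each coefficient, so the theorem as printed is really only attainable in the weaker form above (which still suffices for Corollary \ref{VRNonCS-SCSG:Lemma:paraeters setting} after retuning the numerical constants). A second point you leave implicit, as does the paper: converting $\sum_{k=0}^{K-1}\mathbb{E}\|x_k-x^*\|^2$ into $K\,\mathbb{E}\|\tilde x_{s+1}-x^*\|^2$ requires treating the new snapshot as a uniformly random inner iterate, whereas Algorithm \ref{VRNonCS-SCSG:AlgorithmI} sets $\tilde x_{s+1}=x_K$; telescoping and dividing by $K$ alone, as you describe, never produces a $\rho_1$ (or $\rho_1-\rho_2$) multiplier on $\mathbb{E}\|\tilde x_{s+1}-x^*\|^2$.
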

We do not give the convergence form for the update of iteration as the we do no sure whether 
$\rho_1$ 
is positive or not. Based on above equality in Lemma \ref{VRNonCS-SCSG:Lemma:NewSequence}, we 
assume that $\rho_1>0$ in 
(\ref{VRNonCS-SCSG:Lemma:NewSequence-r1}), 
then we can obtain
{
\begin{align}\label{VRNonCS-SCSG:Lemma:NewSequence-convergence}
\mathbb{E}{\left\| {{{\tilde x}_S} - {x^*}} \right\|^2} \le& {\rho ^S}\mathbb{E}{\left\| 
	{{{\tilde x}_0} - 
		{x^*}} \right\|^2} + \frac{{{\rho _3}}}{{{\rho _1}}}\sum\limits_{s = 0}^S {{\rho ^s}}
\nonumber\\  \le &
{\rho ^S}\mathbb{E}{\left\| {{{\tilde x}_0} - {x^*}} \right\|^2} + \frac{{{\rho 
			_3}}}{{{\rho 
			_1}}}\frac{1-\rho^S}{{1 - \rho }},
\end{align}}
where $\rho=(\frac{1}{K}+\rho_2)/\rho_1$, $\rho_2$ and $\rho_3$ defined in 
(\ref{VRNonCS-SCSG:Lemma:NewSequence-r2}) and (\ref{VRNonCS-SCSG:Lemma:NewSequence-r3}), the last 
inequality is based on the formula of geometric progression. Thus, if the $\tilde{x}_S$ converge 
to the optimal point $x^*$, we need to require that $\rho<1$ and the second term ${\rho _3}( 
{1 - {\rho ^S}} )/( {{\rho _1}( {1 - \rho } )} )$ is less than 
$\varepsilon/2$. Actually, if $D=n$, the second term is equal to zero satisfying the requirement 
directly, which is similar to the 
convergence results in \cite{lian2016finite} and \cite{liu2017duality}.
\subsection{Query complexity analysis for the convex problem}
Based on the above result in (\ref{VRNonCS-SCSG:Lemma:NewSequence-convergence}), we analyze the 
query complexity. Furthermore, we also present the parameters' setting, and then derived the query 
complexity, in which the details information can be referred to the Appendix.
\begin{corollary}\label{VRNonCS-SCSG:Lemma:paraeters setting}
	Suppose Assumption \ref{VRNonCS-SCSG:Assumption:f-strong}-
	\ref{VRNonCS-SCSG:Assumption:Middle-Bound} holds, in Algorithm \ref{VRNonCS-SCSG:AlgorithmI}, 
	let $h=\mu$, the step size is $\eta  \le {\mu 
	}/({{135L_f^2}})$, the subset size of $\cal A$ is $ A = \min \{ {n,128B_G^4L_F^2/{{{\mu 
					^2}}}} \}$, the subset size of $\mathcal{D}_1$ and $\mathcal{D}_2$ are both $D 
					= \min \left\{ 
	{n,5\left( {16B_G^4L_F^2{H_1} + 
			4{H_2}} \right)/({{4\varepsilon {\mu ^2}}}}) \right\}$,  the number 
	of the inner iteration is $K \ge 540L_f^2/\mu ^2$, the number of outer iteration is 
	$S \ge 1/( \log ( 1/\rho  ) )\log ( 
	2E\| {\tilde x}_0 - x^* \|^2/\varepsilon )$.	the query complexity is 
{\small
	\begin{align*}
	{\left( {D + KA} \right)S} = 
\mathcal{O}\left( 
{\left( {\min \left\{ {n,\frac{1}{{{\varepsilon\mu ^2}}}} \right\} + \frac{{L_f^2}}{{{\mu 
					^2}}}\min 
		\left\{ {n,\frac{1}{{{\mu ^2}}}} \right\}} \right)\log \left( {1/\varepsilon } 
	\right)} 
\right).
	\end{align*}
}
\end{corollary}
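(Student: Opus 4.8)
The plan is to substitute the stated parameter values into the per-epoch recursion of Theorem~\ref{VRNonCS-SCSG:Lemma:NewSequence} and its unrolled form (\ref{VRNonCS-SCSG:Lemma:NewSequence-convergence}), verify the three conditions that guarantee $\mathbb{E}\|\tilde x_S - x^*\|^2 \le \varepsilon$, and then multiply the per-epoch query cost $D + KA$ by the epoch count $S$. Concretely, from (\ref{VRNonCS-SCSG:Lemma:NewSequence-convergence}) the target accuracy follows once (i) $\rho := (\frac{1}{K} + \rho_2)/\rho_1 < 1$ with $\rho_1 > 0$, (ii) the stationary residual $\rho_3/(\rho_1(1-\rho)) \le \varepsilon/2$, and (iii) the transient term $\rho^S\,\mathbb{E}\|\tilde x_0 - x^*\|^2 \le \varepsilon/2$. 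Condition (iii) is exactly what the stated lower bound on $S$ enforces, because $\rho$ will turn out to be an absolute constant bounded away from $1$, so that $\log(1/\rho) = \Theta(1)$.

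First I would control $V$ and thereby $\rho_1,\rho_2$. Taking $h=\mu$ and $A = \min\{n, 128 B_G^4 L_F^2/\mu^2\}$ forces the $A$-term of $V$ in (\ref{VRNonCS-SCSG:Lemma:NewSequence-V}) to be at most $\mu^2/32$, while the chosen $D = \Theta(1/(\varepsilon\mu^2))$ makes its $D$-term $\mathcal{O}(\varepsilon\mu^2)$; hence $V = \mathcal{O}(\mu^2)$ and $4V/\mu \le \mu/4$. Using $\mu \le L_f$ and $\eta \le \mu/(135 L_f^2)$, the term $(12 L_f^2 + 10V)\eta$ is a small fraction of $\mu$, so $\rho_1 = \Theta(\mu\eta) > 0$ by (\ref{VRNonCS-SCSG:Lemma:NewSequence-r1}), and likewise $\rho_2 = \mathcal{O}(\mu\eta)$ from (\ref{VRNonCS-SCSG:Lemma:NewSequence-r2}). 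Then $K \ge 540 L_f^2/\mu^2$ gives $1/K \le \frac14\mu\eta$ at $\eta$ of order $\mu/L_f^2$, so that $\frac{1}{K} + \rho_2 \le c\,\rho_1$ for an explicit constant $c < 1$; a short arithmetic check with these numerical constants yields $\rho < 1$, establishing (i).

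Next I would discharge (ii). The constant $V_1$ in (\ref{VRNonCS-SCSG:Lemma:NewSequence-V1}) scales like $H_1/D + H_2/D^2$, and the choice $D = \min\{n, 5(16 B_G^4 L_F^2 H_1 + 4 H_2)/(4\varepsilon\mu^2)\}$ is tuned so that $V_1 = \mathcal{O}(\varepsilon\mu^2)$ (and $V_1 = 0$ outright when $D = n$, in which case the residual vanishes). Since $\rho_3 = \eta V_1(\frac{4}{5\mu} + 2\eta)$ from (\ref{VRNonCS-SCSG:Lemma:NewSequence-r3}), $\rho_1 = \Theta(\mu\eta)$, and $1-\rho$ is a positive constant, the residual collapses to $\rho_3/(\rho_1(1-\rho)) = \Theta(V_1/\mu^2) = \mathcal{O}(\varepsilon)$, which is $\le \varepsilon/2$ for the tuned constant. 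Combining (ii) and (iii) gives $\mathbb{E}\|\tilde x_S - x^*\|^2 \le \varepsilon$, and $L_f$-smoothness upgrades this to $f(\tilde x_S) - f(x^*) = \mathcal{O}(\varepsilon)$, matching the convex termination criterion.

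Finally, the complexity count is routine: each epoch spends $D$ queries on the snapshot estimate $\nabla \hat f_{\mathcal{D}}(\tilde x_s)$ and $K(A+4) = \Theta(KA)$ queries across the inner loop, i.e.\ $D + KA$ per epoch. Substituting $A = \mathcal{O}(\min\{n, 1/\mu^2\})$, $K = \Theta(L_f^2/\mu^2)$, $D = \mathcal{O}(\min\{n, 1/(\varepsilon\mu^2)\})$ and $S = \mathcal{O}(\log(1/\varepsilon))$ into $(D + KA)S$ and collecting terms yields the stated bound. I expect the main obstacle to be the constant-chasing in (i)–(ii): one must confirm that $\rho_1$ stays strictly positive and that $\rho$ is bounded away from $1$ by an absolute constant (so that $S$ costs only a $\log(1/\varepsilon)$ factor rather than blowing up) under the precise numerical constants $128,135,540$, which is exactly where the interplay between the upper bound on $\eta$ and the lower bound on $K$ is delicate.
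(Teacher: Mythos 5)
Your proposal is correct and follows essentially the same route as the paper's own proof: both substitute the stated parameters into the recursion of Theorem~\ref{VRNonCS-SCSG:Lemma:NewSequence}, verify $\rho_1>0$ and $\rho<1$ via the choices of $h$, $A$, $\eta$, $K$, force the stationary residual $\rho_3/(\rho_1(1-\rho))\le\varepsilon/2$ through the choice of $D$ (with the residual vanishing when $D=n$), kill the transient term via the lower bound on $S$, and then multiply the per-epoch cost $D+KA$ by $S=\mathcal{O}(\log(1/\varepsilon))$. The constant-chasing you flag as the delicate step is precisely what the paper's proof carries out (yielding the thresholds $128$, $135$, $540$), so your plan matches its structure in full.
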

As can be seen from the above result, Corollary \ref{VRNonCS-SCSG:Lemma:paraeters setting} present 
the 
general query complexity under different parameters. Comparing $n$ with corresponding parameters, 
we analyze the query complexity separately. We remove the parameters such as $B_G^2$, $L_F^2$, 
$H_1$ and $H_2$, and analyze the size with the order of $1/\mu$.  Though the comparison is not 
exactly correct, we present the results to illustrate the corresponding different  algorithms. We 
can directly obtain that $1/\mu^2<1/(\varepsilon\mu^2)$. We consider three situations comparing 
with $n$, that is to present the value of the $min$ function,

\begin{itemize}
	\item $\frac{1}{{{\mu ^2}}} \le \frac{1}{{\varepsilon {\mu ^2}}} \le n$. When n is large enough 
	such that we can obtain the  query complexity 
	is ${\cal O}( {( {1/( {\varepsilon {\mu ^2}} ) + {}L_f^2/{\mu ^4}} 
		)\log ( {1/\varepsilon } )} )$. This result avoids the situation that computing the full 
	gradient 
	of $f(x)$ and the full function $G(x)$ for the large-scale number of n. What's more, this 
	result is 
	better than Compositional-SVRG \cite{lian2016finite} and \cite{liu2017duality}.
	\item $\frac{1}{{{\mu ^2}}} \le n \le \frac{1}{{\varepsilon {\mu ^2}}}$. When n is smaller than 
	$1/(\varepsilon\mu^2)$,   the query complexity 
	becomes ${\cal O}( {( {n + {}L_f^2/{\mu ^4}} )\log ( {1/\varepsilon } 
		)} )$, which is the same as  Compositional-SVRG \cite{lian2016finite}. That is we need to 
	compute the full gradient of $\nabla f(\tilde{x}_s)$ as in 
	(\ref{VRNonCS-SCSG:Definition:Estimate-f-svrg}). The estimation of inner function $G(x)$ is the 
	same as in \cite{lian2016finite}.
	\item $n \le \frac{1}{{{\mu ^2}}} \le \frac{1}{{\varepsilon {\mu ^2}}}$. When $n$ is small, the 
	query complexity 
	becomes ${\cal O}( {( {n + L_f^2n/{\mu ^2}} )\log ( {1/\varepsilon } 
		)} )$. The result has the similar form to SVRG \cite{johnson2013accelerating}. This also 
		gives
	us the intuition that the inner function should be computed directly rather than estimated.
\end{itemize}

\section{Stochastic Composition via SCSG method for the Non-convex composition problem} 
\label{VRNonCS-SCSG:Section:SCSG-Non-convex}
In this section, we give the analysis of the convergence 
analysis and the query complexity under the proposed algorithm for the non-convex composition. We 
first present the new reformed sequence with respect to $E[f({x_k})] + {c_k}E{\left\| {{x_k} - 
		{{\tilde x}_s}} 	\right\|^2}$, in which the parameters are not well defined. Then, we 
		sum-up 
these sequence 	based on the SVRG-based on the framework, in which there is a snapshot point 
${\tilde 
	x}_s$ in each epoch. The last not least, we present the query complexity analysis and derive 
	the 
optimal parameters' setting such that improve the query complexity.
\subsection{Convergence analysis for the non-convex problem}
We first present the new form sequence under the Lyapunov function based on the smoothness of 
$f(x)$ 
and the update of $x$. The new parameters such as $c_k$, $u_k$ and $J_k$ will be used to form 
sequence such that we can obtain the convergence sequence. 
\begin{lemma}\label{VRNonCS-SCSG:Non:Lemma:NewSequence}
	Suppose Assumption \ref{VRNonCS-SCSG:Assumption:G}-
	\ref{VRNonCS-SCSG:Assumption:Middle-Bound} holds, in Algorithm \ref{VRNonCS-SCSG:AlgorithmI}, 
	we can 
	obtain the following new sequence with respect to $f(x_k)$ and $||x_k-\tilde{x}_s||^2$, let 
	$h>0,\eta>0$, $A =\left|{\cal A}\right|$ and $D =\left|{\cal D}_1\right|=\left|{\cal 
		D}_2\right|$,  we have
	\begin{align*}
	\mathbb{E}[f({x_{k + 1}})] + {c_{k + 1}}\mathbb{E}{\left\| {{x_{k + 1}} - {{\tilde x}_s}} 
		\right\|^2}
	\le \mathbb{E}[f({x_k})] + {c_k}\mathbb{E}{\left\| {{x_k} - {{\tilde x}_s}} \right\|^2} - 
	{u_k}{\left\| 
		{\nabla f({x_k})} \right\|^2} +J_k,
	\end{align*}	
	where
	{
	\begin{align}
	\label{VRNonCS-SCSG:Non:Lemma:NewSequence-W}			
	W =& B_G^4L_F^2\left( {4\frac{{\mathbb{I}\left( {A < n} \right)}}{A} + 4\frac{{\mathbb{I}\left( 
				{D < n} 
				\right)}}{D}} \right),\\
	\label{VRNonCS-SCSG:Non:Lemma:NewSequence-c_k}
	{c_k} =& {c_{k + 1}}\left( {1 + \left( {\frac{2}{h} + 4hW} \right)\eta  + 10\left( 
		{L_f^2 + W} \right){\eta ^2}} \right) + 2W\eta  + 5(L_f^2+W){L_f}{\eta ^2},\\
	\label{VRNonCS-SCSG:Non:Lemma:NewSequence-u_k}
	{u_k} =& \left( {\left( 
		{\frac{1}{2} - h{c_{k + 1}}} \right)\eta  - \left( {{L_f} + 2{c_{k 
					+ 1}}} \right){\eta ^2}} \right),\\
	\label{VRNonCS-SCSG:Non:Lemma:NewSequence-W_1}
	{W_1} =& 20B_G^2L_F^2\frac{{\mathbb{I}(D < n)}}{D}{H_1} + 5\frac{{\mathbb{I}(D^2 < 
			n^2)}}{{D^2}}{H_2},\\
	\label{VRNonCS-SCSG:Non:Lemma:NewSequence-J_k}
	J_k=& \left( {\frac{1}{2} + h{c_{k + 1}}} \right){\frac{4}{5}W_1}\eta  + 
	\left( {{L_f} + 2{c_{k + 1}}} \right){W_1}{\eta ^2}.
	\end{align}
}
\end{lemma}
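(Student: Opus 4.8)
The plan is to bound the two pieces of the potential $\mathbb{E}[f(x_{k+1})] + c_{k+1}\mathbb{E}\|x_{k+1}-\tilde{x}_s\|^2$ separately under the update $x_{k+1} = x_k - \eta\nabla\tilde{f}_k$, and then to collect the resulting coefficients of $\mathbb{E}\|x_k-\tilde{x}_s\|^2$, of $\|\nabla f(x_k)\|^2$, and of the constant residual into the quantities $c_k$, $u_k$, and $J_k$ respectively. Throughout, the crucial structural point is that $\nabla\tilde{f}_k$ is a \emph{biased} estimator, so I must keep the bias $\mathbb{E}_{i_k,j_k}[\nabla\tilde{f}_k] - \nabla f(x_k)$ explicit and control it through Lemma~\ref{VRNonCS-SCSG:Lemma:Bound-estimate-G-3}, while the full mean-square deviation $\mathbb{E}\|\nabla\tilde{f}_k-\nabla f(x_k)\|^2$ is controlled through Lemma~\ref{VRNonCS-SCSG:Lemma:Bound-estimate-G-4}. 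In the notation of (\ref{VRNonCS-SCSG:Non:Lemma:NewSequence-W}) and (\ref{VRNonCS-SCSG:Non:Lemma:NewSequence-W_1}) these two lemmas read, respectively, as a bound $4W\,\mathbb{E}\|x_k-\tilde{x}_s\|^2 + \tfrac{4}{5}W_1$ and a bound $5(L_f^2+W)\,\mathbb{E}\|x_k-\tilde{x}_s\|^2 + W_1$, which is what makes the factors $\tfrac{4}{5}$, $5$, $10$ surface in $c_k$ and $J_k$.

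First I would apply the $L_f$-smoothness of $f$ to obtain $f(x_{k+1}) \le f(x_k) - \eta\langle\nabla f(x_k),\nabla\tilde{f}_k\rangle + \tfrac{L_f\eta^2}{2}\|\nabla\tilde{f}_k\|^2$. Taking the conditional expectation over $i_k,j_k$ replaces $\nabla\tilde{f}_k$ in the inner product by its mean; splitting that mean as $\nabla f(x_k)$ plus the bias and applying Young's inequality with \emph{unit} weight to the bias cross-term yields $-\tfrac{\eta}{2}\|\nabla f(x_k)\|^2$ together with $\tfrac{\eta}{2}\|\mathbb{E}_{i_k,j_k}[\nabla\tilde{f}_k]-\nabla f(x_k)\|^2$ (the unit weight is exactly what produces the $\tfrac12$ in $u_k$). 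For the quadratic term I would use $\|\nabla\tilde{f}_k\|^2 \le 2\|\nabla\tilde{f}_k-\nabla f(x_k)\|^2 + 2\|\nabla f(x_k)\|^2$, so that Lemma~\ref{VRNonCS-SCSG:Lemma:Bound-estimate-G-4} turns it into $10(L_f^2+W)\mathbb{E}\|x_k-\tilde{x}_s\|^2 + 2W_1 + 2\|\nabla f(x_k)\|^2$. Substituting both lemmas expresses the smoothness bound entirely through $\mathbb{E}\|x_k-\tilde{x}_s\|^2$, $\|\nabla f(x_k)\|^2$, and $W_1$.

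Next I would expand $\|x_{k+1}-\tilde{x}_s\|^2 = \|x_k-\tilde{x}_s\|^2 - 2\eta\langle x_k-\tilde{x}_s,\nabla\tilde{f}_k\rangle + \eta^2\|\nabla\tilde{f}_k\|^2$, take expectations, and again split the cross-term through the mean into a $\nabla f(x_k)$ part and a bias part. Here both cross-terms are handled by Young's inequality with \emph{weight} $h$, which is precisely what generates the $\tfrac{2}{h}$ and the $4hW$ (via the bias bound of Lemma~\ref{VRNonCS-SCSG:Lemma:Bound-estimate-G-3}) inside the factor multiplying $c_{k+1}$ in (\ref{VRNonCS-SCSG:Non:Lemma:NewSequence-c_k}); the $\|\nabla\tilde{f}_k\|^2$ term is reduced by the same inequality and the same two lemmas. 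Multiplying this expansion by $c_{k+1}$ and adding it to the smoothness bound, I would group terms: the coefficient of $\mathbb{E}\|x_k-\tilde{x}_s\|^2$ collapses to $c_k$, the coefficient of $\|\nabla f(x_k)\|^2$ collapses to $-u_k$ (the $-hc_{k+1}\eta$ coming from the $h$-weighted cross-term and the $-(L_f+2c_{k+1})\eta^2$ from the two quadratic-term contributions), and the $W_1$-proportional residual collapses to $J_k$, matching (\ref{VRNonCS-SCSG:Non:Lemma:NewSequence-c_k})--(\ref{VRNonCS-SCSG:Non:Lemma:NewSequence-J_k}).

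I expect the main obstacle to be purely the coefficient bookkeeping rather than any conceptual difficulty. Because the estimator is biased, every cross-term splits into two pieces, and one has to choose the Young weights consistently across the two separate expansions — unit weight in the smoothness step, weight $h$ in the potential step — so that the $\|\nabla f(x_k)\|^2$, $\mathbb{E}\|x_k-\tilde{x}_s\|^2$, and $W_1$ contributions align exactly with the stated definitions. The invocations of Lemmas~\ref{VRNonCS-SCSG:Lemma:Bound-estimate-G-3} and~\ref{VRNonCS-SCSG:Lemma:Bound-estimate-G-4} must be tracked carefully to preserve the precise constants; losing a factor of $2$ in the $\|\nabla\tilde{f}_k\|^2$ reduction or mismatching the Young weight would change $c_k$, $u_k$, or $J_k$.
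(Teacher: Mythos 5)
Your proposal is correct and follows essentially the same argument as the paper: $L_f$-smoothness for the $f(x_{k+1})$ piece with a unit-weight Young split of the bias cross-term, the squared-distance expansion with $h$-weighted Young splits for the $x_{k+1}$ piece, and Lemmas~\ref{VRNonCS-SCSG:Lemma:Bound-estimate-G-3} and~\ref{VRNonCS-SCSG:Lemma:Bound-estimate-G-4} to convert the bias and mean-square deviation into $4W\,\mathbb{E}\|x_k-\tilde{x}_s\|^2+\tfrac{4}{5}W_1$ and $5(L_f^2+W)\,\mathbb{E}\|x_k-\tilde{x}_s\|^2+W_1$, with all coefficients collapsing to the stated $c_k$, $u_k$, $J_k$. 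The only cosmetic difference is that you substitute the two lemma bounds inside each expansion before forming the Lyapunov combination, whereas the paper combines first and substitutes last; the bookkeeping is identical.
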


Based on the above inequality with respect to the sequence $\mathbb{E}[f({x_k})] + 
{c_k}\mathbb{E}{\left\| {{x_k} - {{\tilde x}_s}} \right\|^2}$ and Algorithm 
\ref{VRNonCS-SCSG:AlgorithmI}, we can obtain the convergence form in which the parameters are not 
clearly defined.
\begin{theorem}\label{VRNonCS-SCSG:Non:Lemma:Convergence form}
	Suppose Assumption \ref{VRNonCS-SCSG:Assumption:G}-
	\ref{VRNonCS-SCSG:Assumption:Middle-Bound} holds, in Algorithm \ref{VRNonCS-SCSG:AlgorithmI}, 
	we can 
	obtain the following new sequence with respect to $f(x_k)$ and $||x_k-\tilde{x}_s||^2$.  $K$ is 
	the number of inner iterations, $S$ is the  number of 
	inner iterations, we have
	\begin{align*}
	u_0\mathbb{E}[{\| {\nabla f({\hat x_k^s })} \|^2}]  \le 
	\frac{{f({x_0}) - f({x^*})}}{{KS}}+J_0,
	\end{align*}
	where  $\hat x_k^s$ is the output point, $J_0$ and $u_0$ are defined in 
	(\ref{VRNonCS-SCSG:Non:Lemma:NewSequence-J_k}) and 
	(\ref{VRNonCS-SCSG:Non:Lemma:NewSequence-u_k}).
\end{theorem}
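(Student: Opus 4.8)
The plan is to run a two-level telescoping argument, first over the inner iterations $k$ inside a fixed epoch and then over the epochs $s$, starting from the one-step descent inequality of Lemma \ref{VRNonCS-SCSG:Non:Lemma:NewSequence}. Writing $R_k := \mathbb{E}[f(x_k)] + c_k\mathbb{E}\|x_k-\tilde{x}_s\|^2$ for the Lyapunov function, that lemma reads exactly $R_{k+1}\le R_k - u_k\mathbb{E}\|\nabla f(x_k)\|^2 + J_k$, so summing over $k=0,\dots,K-1$ gives $\sum_{k=0}^{K-1} u_k\,\mathbb{E}\|\nabla f(x_k)\|^2 \le R_0 - R_K + \sum_{k=0}^{K-1}J_k$. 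The boundary terms collapse: since $x_0=\tilde{x}_s$ we have $\mathbb{E}\|x_0-\tilde{x}_s\|^2=0$, hence $R_0=\mathbb{E}[f(\tilde{x}_s)]$, and by taking the terminal value $c_K=0$ in the backward recursion (\ref{VRNonCS-SCSG:Non:Lemma:NewSequence-c_k}) together with $\tilde{x}_{s+1}=x_K$ we obtain $R_K=\mathbb{E}[f(\tilde{x}_{s+1})]$.

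The second step is to replace the index-dependent coefficients $u_k$ and $J_k$ by their values at $k=0$. First I would argue from (\ref{VRNonCS-SCSG:Non:Lemma:NewSequence-c_k}) that the sequence $c_k$ is nonincreasing in $k$: the recursion expresses $c_k$ as $c_{k+1}$ times a factor strictly larger than $1$ plus a nonnegative term, so running it backward from $c_K=0$ yields $c_0\ge c_1\ge\cdots\ge c_K=0$, and in particular $c_{k+1}\le c_1$ for every $k\ge 0$. Inspecting (\ref{VRNonCS-SCSG:Non:Lemma:NewSequence-u_k}), $u_k$ is decreasing in $c_{k+1}$, so $u_k\ge u_0$; inspecting (\ref{VRNonCS-SCSG:Non:Lemma:NewSequence-J_k}), $J_k$ is increasing in $c_{k+1}$, so $J_k\le J_0$. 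Substituting these two bounds turns the epoch inequality into $u_0\sum_{k=0}^{K-1}\mathbb{E}\|\nabla f(x_k)\|^2 \le \mathbb{E}[f(\tilde{x}_s)]-\mathbb{E}[f(\tilde{x}_{s+1})] + K J_0$.

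Finally I would sum over the epochs $s=0,\dots,S-1$. The right-hand side telescopes to $\mathbb{E}[f(\tilde{x}_0)]-\mathbb{E}[f(\tilde{x}_S)] + SK J_0 \le f(x_0)-f(x^*) + SK J_0$, using $\tilde{x}_0=x_0$ and $f(\tilde{x}_S)\ge f(x^*)$. Dividing by $SK$ and recognizing that the output $\hat{x}_k^s$ is drawn uniformly over $s\in\{0,\dots,S-1\}$ and $k\in\{0,\dots,K-1\}$, so that $\mathbb{E}[\|\nabla f(\hat{x}_k^s)\|^2]=\frac{1}{SK}\sum_{s,k}\mathbb{E}\|\nabla f(x_k^s)\|^2$, delivers the claimed bound $u_0\,\mathbb{E}[\|\nabla f(\hat{x}_k^s)\|^2]\le (f(x_0)-f(x^*))/(KS)+J_0$.

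I expect the only delicate point to be the monotonicity claim in the second step: verifying that the multiplicative factor in (\ref{VRNonCS-SCSG:Non:Lemma:NewSequence-c_k}) exceeds $1$ (which holds because $\eta,h,W>0$) and that $u_k$ and $J_k$ depend on $c_{k+1}$ with the signs that make bounding by the $k=0$ values go in the useful direction. Everything else is bookkeeping — the two telescopes and the uniform-sampling identity for the output. The positivity of $u_0$, which is what makes the resulting bound informative, is governed by the parameter choices fixed later in the query-complexity analysis rather than by this theorem.
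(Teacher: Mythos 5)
Your proposal is correct and follows essentially the same route as the paper's proof: the same Lyapunov telescoping over the inner iterations using the one-step inequality of Lemma \ref{VRNonCS-SCSG:Non:Lemma:NewSequence}, the same monotonicity argument on $c_k$ (backward recursion from a vanishing terminal value) to replace $u_k$ by $u_0$ and $J_k$ by $J_0$, the same boundary identifications $x_0=\tilde{x}_s$, $\tilde{x}_{s+1}=x_K$, and the same outer telescope plus uniform-sampling identity. The only cosmetic difference is that you enforce $R_K=\mathbb{E}[f(\tilde{x}_{s+1})]$ via $c_K=0$, whereas the paper simply drops the nonnegative term $c_K\mathbb{E}\|x_K-\tilde{x}_s\|^2$; both are valid and lead to the identical bound.
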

\subsection{Query complexity analysis for the non-convex problem}
Consider the convergence form above,  we actually can't obtain the convergence rate if the 
parameter $u_k$ in (\ref{VRNonCS-SCSG:Non:Lemma:NewSequence-u_k}) is negative. Furthermore, there 
is extra term $J_0$ derived from the subset $\cal D$. We need to consider the size of the subset 
$\cal D$ such that we can keep the $J_0$ under our desired  degree of accuracy $\epsilon$. What's 
more, the parameter $c_k$ in (\ref{VRNonCS-SCSG:Non:Lemma:NewSequence-c_k}) is not a constant, 
which has a relationship with $K$ and $\eta$. Based on these influence element, we consider the 
parameters' setting and give the query complexity.
\begin{corollary}\label{VRNonCS-SCSG:Non:Lemma:paraeters setting}
	Suppose Assumption \ref{VRNonCS-SCSG:Assumption:G}-
	\ref{VRNonCS-SCSG:Assumption:Middle-Bound} holds, in Algorithm \ref{VRNonCS-SCSG:AlgorithmI}, 
	for the step $\eta>0$, by setting $h =\sqrt {1/\eta}$,  the set-size of the subset 
	$\mathcal{D}_1$ and 
	$\mathcal{D}_2$ are $ D = \min \left\{ {n,\mathcal{O}(1/\varepsilon )} \right\}$, the set-size 
	of $\cal A$ is $A = \min \left\{ {n,\mathcal{O}\left( {1/\eta } \right)} \right\}$, the number 
	of 
	inner iteration is $K \le 
	\mathcal{O}\left( {1/{\eta ^{3/2}}} \right)$, the total number of iteration is $T = 
	\mathcal{O}\left( {1/\left( 
		{\varepsilon \eta } \right)} \right)$, then we can 
	obtain	$\mathbb{E}[\| \nabla f(\hat x_k^s ) \|^2]  \le 
	\varepsilon$.
\end{corollary}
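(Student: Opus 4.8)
The plan is to derive the corollary directly from Theorem~\ref{VRNonCS-SCSG:Non:Lemma:Convergence form}, which already supplies $u_0\,\mathbb{E}[\|\nabla f(\hat x_k^s)\|^2]\le (f(x_0)-f(x^*))/(KS)+J_0$. Since $\hat x_k^s$ is the returned point, it suffices to show that, under the stated choices of $h$, $A$, $D$, $K$ and $T=KS$, the constant $u_0$ is strictly positive and the whole right-hand side divided by $u_0$ is at most $\varepsilon$. I would split the target into two halves and force each of $(f(x_0)-f(x^*))/(u_0KS)$ and $J_0/u_0$ below $\varepsilon/2$.

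The core of the argument is controlling $c_k$, $u_0$ and $J_0$, all of which hinge on the value $c_1$ generated by the backward recursion (\ref{VRNonCS-SCSG:Non:Lemma:NewSequence-c_k}). First I would estimate the auxiliary quantities under the prescribed subset sizes: with $A=\min\{n,\mathcal{O}(1/\eta)\}$ the factor $\mathbb{I}(A<n)/A$ is $\mathcal{O}(\eta)$, and with $D=\min\{n,\mathcal{O}(1/\varepsilon)\}$ the factor $\mathbb{I}(D<n)/D$ is $\mathcal{O}(\varepsilon)$; in the relevant regime $\varepsilon\le\eta$ (which holds at the later optimal choice $\eta\sim\varepsilon^{2/5}$) this yields $W=\mathcal{O}(\eta)$ from (\ref{VRNonCS-SCSG:Non:Lemma:NewSequence-W}) and $W_1=\mathcal{O}(1/D)$ from (\ref{VRNonCS-SCSG:Non:Lemma:NewSequence-W_1}). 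Substituting $h=\sqrt{1/\eta}$ into the multiplicative factor of (\ref{VRNonCS-SCSG:Non:Lemma:NewSequence-c_k}) then collapses every cross term to the same order: $\tfrac{2}{h}\eta=2\eta^{3/2}$, $4hW\eta=4W\sqrt\eta=\mathcal{O}(\eta^{3/2})$ and $10(L_f^2+W)\eta^2=\mathcal{O}(\eta^2)$, so the per-step growth is $\theta=\mathcal{O}(\eta^{3/2})$ while the additive term is $\beta=2W\eta+5(L_f^2+W)L_f\eta^2=\mathcal{O}(\eta^2)$.

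Unrolling the recursion from the terminal value $c_K=0$ gives $c_1\le \beta\,[(1+\theta)^{K}-1]/\theta$, and here the choice $K\le\mathcal{O}(1/\eta^{3/2})$ is exactly what makes $\theta K=\mathcal{O}(1)$, whence $(1+\theta)^K\le e^{\theta K}=\mathcal{O}(1)$ and therefore $c_1\le\mathcal{O}(\beta/\theta)=\mathcal{O}(\sqrt\eta)$. I would pin the absolute constant in $c_1=C\sqrt\eta$ (by tuning the hidden constants in $A$ and $K$) so that $hc_1=\eta^{-1/2}\cdot C\sqrt\eta=C<\tfrac12$; this is precisely the condition guaranteeing $u_0>0$. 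With $c_1=\mathcal{O}(\sqrt\eta)$ in hand, (\ref{VRNonCS-SCSG:Non:Lemma:NewSequence-u_k}) gives $u_0=(\tfrac12-hc_1)\eta-(L_f+2c_1)\eta^2=\Theta(\eta)$, while (\ref{VRNonCS-SCSG:Non:Lemma:NewSequence-J_k}) gives $J_0=\mathcal{O}(W_1\eta)=\mathcal{O}(\eta/D)$ since $\tfrac12+hc_1=\mathcal{O}(1)$.

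Finally I would impose the two accuracy conditions. Dividing by $u_0=\Theta(\eta)$, the bias term becomes $J_0/u_0=\mathcal{O}(1/D)$, which is at most $\varepsilon/2$ exactly when $D\ge\mathcal{O}(1/\varepsilon)$, justifying $D=\min\{n,\mathcal{O}(1/\varepsilon)\}$ (and note that $D=n$ kills the indicators, hence $J_0$, altogether). The optimization term becomes $(f(x_0)-f(x^*))/(u_0KS)=\mathcal{O}(1/(\eta T))$, which is at most $\varepsilon/2$ when $T=KS\ge\mathcal{O}(1/(\varepsilon\eta))$, justifying the stated total iteration count. Summing the two halves yields $\mathbb{E}[\|\nabla f(\hat x_k^s)\|^2]\le\varepsilon$. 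I expect the recursion bound to be the main obstacle: one must verify \emph{simultaneously} that $\theta K=\mathcal{O}(1)$ keeps $c_1$ at order $\sqrt\eta$ and that its leading constant stays below $\tfrac12$, so that $u_0$ does not change sign; everything else is order-of-magnitude bookkeeping once $c_1$ is controlled.
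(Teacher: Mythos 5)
Your proposal is correct and follows essentially the same route as the paper's proof: both start from Theorem~\ref{VRNonCS-SCSG:Non:Lemma:Convergence form}, split the bound into two $\varepsilon/2$ halves, unroll the recursion (\ref{VRNonCS-SCSG:Non:Lemma:NewSequence-c_k}) backward from $c_K=0$ as a geometric progression, use $K=\mathcal{O}(1/\eta^{3/2})$ so the geometric factor stays $\mathcal{O}(1)$ (bounded by $e$), impose $hc<\mathrm{const}$ to keep $u_0=\Theta(\eta)$ positive, and then read off $D=\mathcal{O}(1/\varepsilon)$ from $J_0/u_0\le\varepsilon/2$ and $T=\mathcal{O}(1/(\varepsilon\eta))$ from the optimization term. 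The only differences are cosmetic (you track $c_1$ with order-of-magnitude constants and the regime $\varepsilon\le\eta$, where the paper tracks $c_0$ with explicit constants and the standing assumption $A\le D$), so this matches the paper's argument.
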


The above corollary gives the parameters' setting except the step $\eta$, note that the outer 
number of iteration $S$ has the relationship with $T$ and $K$, that is $T=SK$. Here, we present the 
optimal step $\eta$ such that we reach the improved the query complexity.
\begin{corollary}
	Suppose Assumption \ref{VRNonCS-SCSG:Assumption:G}-
	\ref{VRNonCS-SCSG:Assumption:Middle-Bound} holds, in Algorithm \ref{VRNonCS-SCSG:AlgorithmI}, 
	the step size is  $\eta  = \min \{ 1/n^{2/5},\varepsilon ^{2/5} \}$, then 
	the query complexity is
	\begin{center}
		$\mathcal{O}\left( \min \left\{ 
		{\frac{1}{{{\varepsilon 
						^{9/5}}}},\frac{{{n^{4/5}}}}{\varepsilon }} 
		\right\} \right),$
	\end{center}
%
\end{corollary}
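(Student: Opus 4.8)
The plan is to turn the statement into a one–variable optimization in the step size $\eta$, using the per–epoch query count of Algorithm~\ref{VRNonCS-SCSG:AlgorithmI} together with the parameter settings already fixed in Corollary~\ref{VRNonCS-SCSG:Non:Lemma:paraeters setting}. First I would read off the cost of one epoch: $\mathcal{O}(D)$ queries to form the snapshot gradient $\nabla\hat f_{\mathcal D}(\tilde x_s)$, then $\mathcal{O}(A)$ queries for each $\hat G_k$ and $\mathcal{O}(1)$ for each $\nabla\tilde f_k$, i.e. $\mathcal{O}(D+KA)$ per epoch and $\mathcal{O}\big((D+KA)S\big)$ overall. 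Eliminating $S$ through $T=SK$ gives a total of $\mathcal{O}(DS+AT)=\mathcal{O}\big(D\,T/K+AT\big)$, and substituting $K=\mathcal{O}(\eta^{-3/2})$ and $T=\mathcal{O}((\varepsilon\eta)^{-1})$ yields $T/K=\mathcal{O}(\eta^{1/2}/\varepsilon)$. Hence the query complexity is $\mathcal{O}\big(\tfrac{\eta^{1/2}}{\varepsilon}D+\tfrac{1}{\varepsilon\eta}A\big)$.

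Next I would insert the truncated batch sizes $D=\min\{n,\mathcal{O}(1/\varepsilon)\}$ and $A=\min\{n,\mathcal{O}(1/\eta)\}$ from the same corollary. The first summand grows with $\eta$ while the second decays, so the optimal $\eta$ is the one that balances them. On the branch where $A=\mathcal{O}(1/\eta)$ is active, balancing $\tfrac{\eta^{1/2}}{\varepsilon}D$ against $\tfrac{1}{\varepsilon\eta^{2}}$ forces $D=\Theta(\eta^{-5/2})$, i.e. $\eta=\Theta(D^{-2/5})$, and at this value both summands coincide at $\mathcal{O}(D^{4/5}/\varepsilon)$. This is the heart of the computation: the whole complexity collapses to $\mathcal{O}\big((\min\{n,1/\varepsilon\})^{4/5}/\varepsilon\big)$.

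It remains to unfold the truncation in $D$. When $n\ge 1/\varepsilon$ we have $D=\mathcal{O}(1/\varepsilon)$, the balancing step size is $\eta=\mathcal{O}(\varepsilon^{2/5})$, and the bound becomes $\mathcal{O}(\varepsilon^{-9/5})$; when $n\le 1/\varepsilon$ we have $D=n$, the step size is $\eta=\mathcal{O}(n^{-2/5})$, and the bound becomes $\mathcal{O}(n^{4/5}/\varepsilon)$. Since $(\min\{n,1/\varepsilon\})^{4/5}/\varepsilon=\min\{n^{4/5}/\varepsilon,\varepsilon^{-9/5}\}$ and the two expressions agree at $n=1/\varepsilon$, the two regimes merge into the claimed $\mathcal{O}\big(\min\{\varepsilon^{-9/5},n^{4/5}/\varepsilon\}\big)$.

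The main obstacle is the bookkeeping of the two $\min$–truncations, not the power counting. For the balancing $\eta$ I must check that the branch I assumed is the active one---e.g. that $1/\eta\le n$ so that $A=\mathcal{O}(1/\eta)$ rather than $A=n$---and that the value $\eta=\Theta(D^{-2/5})$ still respects the convergence requirements behind Corollary~\ref{VRNonCS-SCSG:Non:Lemma:paraeters setting}: namely $u_0>0$ in (\ref{VRNonCS-SCSG:Non:Lemma:NewSequence-u_k}) and that the residual $J_0$ in (\ref{VRNonCS-SCSG:Non:Lemma:NewSequence-J_k}) stays $\mathcal{O}(\varepsilon)$, so that Theorem~\ref{VRNonCS-SCSG:Non:Lemma:Convergence form} still certifies $\mathbb{E}\|\nabla f(\hat x_k^s)\|^2\le\varepsilon$. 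Checking that the indicators $\mathbb{I}(A<n)$ and $\mathbb{I}(D<n)$ switch consistently across the boundary $n=1/\varepsilon$ is the only genuinely delicate point; everything else is the routine substitution sketched above.
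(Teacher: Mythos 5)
Your proposal is correct and follows essentially the same route as the paper's own proof: both express the total query count as $\frac{T}{K}\left( {D + KA} \right) = \frac{1}{{\varepsilon \eta }}\left( {\frac{D}{K} + A} \right)$, substitute the settings $D = \min \{ n,\mathcal{O}(1/\varepsilon )\}$, $A = \min \{ n,\mathcal{O}(1/\eta )\}$, $K = \mathcal{O}(\eta ^{ - 3/2})$, $T = \mathcal{O}(1/(\varepsilon \eta ))$ from Corollary~\ref{VRNonCS-SCSG:Non:Lemma:paraeters setting}, and then pick $\eta$ to balance the two resulting terms $\frac{1}{\varepsilon }\min \{ n,1/\varepsilon \} \,\eta ^{1/2}$ and $\frac{1}{{\varepsilon \eta ^2}}$. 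One useful byproduct of your bookkeeping: your balancing gives $\eta = \Theta ( D^{ - 2/5} ) = \max \{ 1/n^{2/5},\varepsilon ^{2/5}\}$ (namely $\varepsilon ^{2/5}$ when $n \ge 1/\varepsilon$ and $n^{ - 2/5}$ when $n \le 1/\varepsilon$), which shows that the step size written as $\min \{ 1/n^{2/5},\varepsilon ^{2/5}\}$ in the corollary statement and at the end of the paper's proof is a typo for the $\max$ --- with the literal $\min$, in the regime $n > 1/\varepsilon$ one has $\eta = n^{ - 2/5}$ and the term $\frac{1}{{\varepsilon \eta ^2}} = \frac{{n^{4/5}}}{\varepsilon } \gg \varepsilon ^{ - 9/5}$, so the claimed bound would not be attained; your derivation (like the paper's actual computation) uses the correct balancing value.
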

\begin{proof}
	Based on the parameters' setting, that is $ D = \min \left\{ {n,\mathcal{O}(1/\varepsilon )} 
	\right\}$, $A = \min \left\{ {n,\mathcal{O}\left( {1/\eta } \right)} \right\}$, $K \le 
	\mathcal{O}\left( {1/{\eta ^{3/2}}} \right)$, and  $T = 
	\mathcal{O}\left( {1/\left( 
		{\varepsilon \eta } \right)} \right)$, we have,
	{
	\begin{align*}	
	\mathcal{O}\left( {\frac{T}{K}\left( {D + KA} \right)} \right) =& \mathcal{O}\left( 
	{\frac{1}{{\varepsilon \eta 
		}}\left( {\frac{D }{K} + A} \right)} \right)\\
	=& \mathcal{O}\left( {\frac{1}{{\varepsilon \eta }}\left( {\min \left\{ {n,\frac{1}{\varepsilon 
			}} 
			\right\} {{\eta ^{3/2}}} + \frac{1}{\eta }} \right)} \right)\\
	=& \mathcal{O}\left( {\frac{1}{\varepsilon }\left( {\min \left\{ {n,\frac{1}{\varepsilon }} 
			\right\}{{\eta ^{1/2}}}  + \frac{1}{{{\eta ^2}}}} \right)} \right)\\
	\ge& \mathcal{O}\left( \min \left\{ {\frac{1}{{{\varepsilon 
					^{9/5}}}},\frac{{{n^{4/5}}}}{\varepsilon }} 
	\right\} \right),
	\end{align*}
	}
	where the optimal $\eta  = \min \left\{ {1/{n^{2/5}},{\varepsilon ^{2/5}}} \right\}$.
\end{proof}
As can be sen from the above result,  we can see that when $n$ is large enough the query complexity 
become $ \mathcal{O}(1/\varepsilon^{9/5})$, that is the gradient and the inner function are 
estimated rather than computed the full value directly. The corresponding is better than the 
accelerated method in \cite{wang2016accelerating}, in which the query complexity does not depend on 
$n$.
Furthermore,  when $n\le 1/\varepsilon$, the 
query complexity is $\mathcal{O}(n^{4/5}/\varepsilon)$, which is the same as in 
\cite{liu2017variance} for the case of the problem in (\ref{VRNonCS-SCSG:ProblemMainComposition}).
\section{Mini-batch version of SC-SCSG for the composition problem} 
\label{VRNonCS-SCSG:Section:SCSG-minibatch}
\begin{algorithm}[t]
	\caption{Mini-batch version of SC-SCSG for the  
	composition problem}
	\label{VRNonCS-SCSG:AlgorithmII}
	\begin{algorithmic}
		\Require $K$, $S$, $\eta$ (learning rate), $\tilde{x}_0$ and $\mathcal{D} = \left[ 
		{{\mathcal{D}_1},{\mathcal{D}_2}} \right]$
		\For{$s =0,1, 2,\cdots,S-1$} 
		\State Sample from $[n]$ for D times to form mini-batch $\mathcal{D}_1$ 
		\State Sample from $[n]$ for D times to form mini-batch $\mathcal{D}_2$ 		
		\State $\nabla {\hat f_{ \mathcal{D}}}({{\tilde x}_s}) = {(\partial G_{ 
				\mathcal{D}_1}({{\tilde 
					x}_s}))^\mathsf{T}}{\nabla {F_{{ \mathcal{D}_2}}}(G_{ 
				\mathcal{D}_1}({{\tilde 
					x}_s}))}  
		$\Comment{D Queries}
		\State $x_0=\tilde{x}_s$
		\For{$k =0,1,2,\cdots,K-1$}
		\State
		Sample from $[m]$  to form mini-batch $\mathcal{A}$ 
		\State
		${{\hat G}_k} = {G_{{{\cal A}}}}({x_k}) - {G_{{{\cal A}}}}({{\tilde x}_s}) + 
		{G_{\mathcal{D}_1}}({{\tilde 
				x}_s})$\Comment{A Queries}
		\State ${\Lambda_0}=0$	
		\For{t=1,...,b}	
		\State Uniformly and randomly pick $i_k$ and $j_k$ from $[n]$				
		\State
		Compute the estimated gradient $\nabla {{\tilde f}_k}$ from 
		(\ref{VRNonCS-SCSG:Definition:Estimate-f})
		\Comment{4 Queries}
		\State ${\Lambda _{t+1}}={\Lambda _t}+\nabla {{\tilde f}_k}$
		\EndFor
		\State ${\Lambda}={\Lambda _b}/b$
		\State
		${x_{k+1}}= {x_k} - {\eta} {\Lambda} $
		\EndFor
		\State Update $\tilde{x}_{s+1}=x_K $
		\EndFor \\	
		\textbf{Output:}  $\hat x_k^s$ is uniformly and randomly chosen from  $s\in\{0,...,S-1\}$ 
		and $k\in \{0,..,K-1\}$.
	\end{algorithmic}
\end{algorithm}
In this section, we present the mini-batch version of the proposed method in Algorithm 
\ref{VRNonCS-SCSG:AlgorithmII}. The difference with Algorithm \ref{VRNonCS-SCSG:AlgorithmI} is the 
computation of the gradient of the $f(x)$. Furthermore, the convergence proof with the related 
upper bounds are almost the same except the following lemma. By using the Lemma 
\ref{VRNonCS-SCSG:Appendix:Lemma:Inequation-indepent}, we derive the similar bound but the first 
term is reduced by a factor of $b$, where $b$ is the number of the mini-batch. Note that here the 
element in the mini-batch are independent, we can obtain the result directly. The details can be 
referred to Appendix.
\begin{lemma}\label{VRNonCS-SCSG-miniBatch:Lemma:Bound-estimate-G-4}
	Suppose 
	Assumption \ref{VRNonCS-SCSG:Assumption:G}-\ref{VRNonCS-SCSG:Assumption:Middle-Bound} holds, 
	for 
	${{\hat 
			G}_k}$ defined in (\ref{VRNonCS-SCSG:Definition:Estimate-G}) and $\Lambda $ defined in 
	Algorithm \ref{VRNonCS-SCSG:AlgorithmII} with $\mathcal{D} = 
	\left[ 
	{{\mathcal{D}_1},{\mathcal{D}_2}} \right]$ and $D =\left|{\cal 
		D}_1\right|=\left|{\cal 
		D}_2\right|$, we have
	\begin{align*}
	&\mathbb{E}_{i_k,j_k,\cal A, \cal D}{\| {\Lambda  - \nabla f\left( {{x_k}} 
			\right)} \|^2}\\
	\le&5B_G^4L_F^2\left( 
	{\frac{L_f^2}{bB_G^4L_F^2} + 
		4\frac{{\mathbb{I}\left( {A < n} \right)}}{A} + 4\frac{{\mathbb{I}\left( {D < n} 
				\right)}}{D}} \right)\mathbb{E}{\left\| {{x_k} - {{\tilde x}_s}} \right\|^2} + 
	20B_G^2L_F^2\frac{{\mathbb{I}(D < 
			n)}}{D}{H_1} + 5\frac{{\mathbb{I}(D^2 < n^2)}}{{D^2}}{H_2},
	\end{align*}
\end{lemma}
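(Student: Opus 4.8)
The plan is to deduce the mini-batch bound from the single-draw bound of Lemma \ref{VRNonCS-SCSG:Lemma:Bound-estimate-G-4} through a variance--bias split over the freshly resampled indices $i_k,j_k$, exploiting the fact that the subsets $\mathcal{A}$ and $\mathcal{D}$ are \emph{shared} by all $b$ inner draws while only $i_k,j_k$ are drawn afresh inside the $t$-loop. First I would condition on $\mathcal{A},\mathcal{D}$, so that $\hat G_k$ and $\nabla\hat f_{\mathcal D}(\tilde x_s)$ are fixed, and write
\begin{align*}
\mathbb{E}\|\Lambda-\nabla f(x_k)\|^2=\mathbb{E}\big\|\Lambda-\mathbb{E}_{i_k,j_k}[\Lambda]\big\|^2+\mathbb{E}\big\|\mathbb{E}_{i_k,j_k}[\Lambda]-\nabla f(x_k)\big\|^2 ,
\end{align*}
the cross term vanishing because $\Lambda-\mathbb{E}_{i_k,j_k}[\Lambda]$ has zero conditional mean whereas $\mathbb{E}_{i_k,j_k}[\Lambda]-\nabla f(x_k)$ is measurable with respect to $\mathcal{A},\mathcal{D}$.

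For the second (bias) term, note that the $b$ summands $\nabla\tilde f_k^{(t)}$ are conditionally identically distributed, so $\mathbb{E}_{i_k,j_k}[\Lambda]=\mathbb{E}_{i_k,j_k}[\nabla\tilde f_k]$. Hence this term is exactly the quantity already controlled in Lemma \ref{VRNonCS-SCSG:Lemma:Bound-estimate-G-3}, and it is left \emph{untouched} by the mini-batching; it contributes $16B_G^4L_F^2\big(\tfrac{\mathbb{I}(A<n)}{A}+\tfrac{\mathbb{I}(D<n)}{D}\big)\mathbb{E}\|x_k-\tilde x_s\|^2+16B_G^2L_F^2\tfrac{\mathbb{I}(D<n)}{D}H_1+4\tfrac{\mathbb{I}(D^2<n^2)}{D^2}H_2$.

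For the first (variance) term, conditioning on $\mathcal{A},\mathcal{D}$ leaves the pairs $(i_k^{(t)},j_k^{(t)})$ independent and identically distributed, so $\Lambda$ is the average of $b$ conditionally i.i.d.\ centered vectors. Applying the independent case of Lemma \ref{VRNonCS-SCSG:Appendix:Lemma:Inequation-indepent} (equivalently the elementary identity $\mathbb{E}\|b^{-1}\sum_t v_t\|^2=b^{-1}\mathbb{E}\|v_1\|^2$) gives
\begin{align*}
\mathbb{E}\big\|\Lambda-\mathbb{E}_{i_k,j_k}[\Lambda]\big\|^2=\frac{1}{b}\,\mathbb{E}\big\|\nabla\tilde f_k-\mathbb{E}_{i_k,j_k}[\nabla\tilde f_k]\big\|^2 .
\end{align*}
The single-draw conditional variance is then estimated exactly as in the proof of Lemma \ref{VRNonCS-SCSG:Lemma:Bound-estimate-G-4}: discard the $\mathcal{D}_2$-term (constant in $i_k,j_k$), bound the variance by the second moment of the remaining SVRG-type difference, and invoke Assumption \ref{VRNonCS-SCSG:Assumption:GF} for the $L_f^2$ contribution together with Lemma \ref{VRNonCS-SCSG:Lemma:Bound-estimate-G-2} for $\hat G_k-G(x_k)$ and Lemma \ref{VRNonCS-SCSG:Appendix:Lemma:Inequation-indepent} for $G_{\mathcal{D}_1}(\tilde x_s)-G(\tilde x_s)$. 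This yields $5L_f^2\mathbb{E}\|x_k-\tilde x_s\|^2+4B_G^4L_F^2\big(\tfrac{\mathbb{I}(A<n)}{A}+\tfrac{\mathbb{I}(D<n)}{D}\big)\mathbb{E}\|x_k-\tilde x_s\|^2+4B_G^2L_F^2\tfrac{\mathbb{I}(D<n)}{D}H_1+\tfrac{\mathbb{I}(D^2<n^2)}{D^2}H_2$. Recombining, the $L_f^2$ part genuinely acquires the factor $1/b$, i.e.\ $5L_f^2/b=5B_G^4L_F^2\cdot\frac{L_f^2}{bB_G^4L_F^2}$, which is the ``first term reduced by a factor of $b$''; the leftover $1/b$ copies of the $\mathbb{I}$-contributions merge with the undivided bias coefficients to give $(16+4/b)B_G^4L_F^2$, $(16+4/b)B_G^2L_F^2$ and $(4+1/b)$, all of which are dominated by $20B_G^4L_F^2$, $20B_G^2L_F^2$ and $5$ since $b\ge1$, recovering the stated constants after $1/b\le1$.

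The main obstacle is bookkeeping rather than any new inequality: one must track carefully that the shared subsets $\mathcal{A},\mathcal{D}$ do \emph{not} inherit the $1/b$ factor---only the inner-index variance does---and verify that the residual $1/b$ multiples of the subset terms can be absorbed harmlessly into the larger, undivided bias coefficients coming from Lemma \ref{VRNonCS-SCSG:Lemma:Bound-estimate-G-3}. Once the decomposition cleanly isolates the $i_k,j_k$-variance from the $\mathcal{A},\mathcal{D}$-bias, the remainder is a re-run of the Lemma \ref{VRNonCS-SCSG:Lemma:Bound-estimate-G-4} computation with the single $1/b$ inserted in front of the variance part.
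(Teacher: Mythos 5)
Your overall strategy --- condition on $\mathcal{A},\mathcal{D}$, split $\mathbb{E}\|\Lambda-\nabla f(x_k)\|^2$ into conditional variance plus squared bias, observe that the bias equals the quantity of Lemma \ref{VRNonCS-SCSG:Lemma:Bound-estimate-G-3} and is untouched by mini-batching, and give only the conditional variance the $1/b$ factor via the independent case of Lemma \ref{VRNonCS-SCSG:Appendix:Lemma:Inequation-indepent} --- is structurally sensible and genuinely different from the paper's proof. The paper never forms a bias--variance split: it decomposes $\Lambda-\nabla f(x_k)$ directly into the same five terms as in the proof of Lemma \ref{VRNonCS-SCSG:Lemma:Bound-estimate-G-4}, keeping the mini-batch average inside each term, and notes that only the first term (the SVRG-type difference, an average of $b$ i.i.d.\ mean-zero vectors by Assumption \ref{VRNonCS-SCSG:AssumptionIndependent}) gains the $1/b$ from Lemma \ref{VRNonCS-SCSG:Appendix:Lemma:Inequation-indepent}, while the other four terms are bounded exactly as in the single-draw case. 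In that route every error source is charged once, with coefficient $5$, which is precisely how the constants $20B_G^4L_F^2$, $20B_G^2L_F^2$ and $5$ arise.

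Your route, as written, has a genuine gap in the constants. The intermediate bound you assert for the single-draw conditional variance, with coefficients $5L_f^2$, $4B_G^4L_F^2$, $4B_G^2L_F^2$ and $1$, is not justified: to obtain a bound of the required form the variance must be related to $\|\hat G_k-G(x_k)\|^2$ (Assumption \ref{VRNonCS-SCSG:Assumption:GF} controls the composition gradient only at the true $G$), hence must pass through a splitting $\|a_1+a_2+a_3\|^2\le w_1\|a_1\|^2+w_2\|a_2\|^2+w_3\|a_3\|^2$ with $1/w_1+1/w_2+1/w_3\le 1$, so $w_1>1$, and after Lemma \ref{VRNonCS-SCSG:Lemma:Bound-estimate-G-2} the variance carries a coefficient $4w_1>4$ on $B_G^4L_F^2\bigl(\frac{\mathbb{I}(A<n)}{A}+\frac{\mathbb{I}(D<n)}{D}\bigr)$; the symmetric split gives $12$, not your $4$. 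Adding the bias coefficient $16$ from Lemma \ref{VRNonCS-SCSG:Lemma:Bound-estimate-G-3}, your total on that term is $16+4w_1/b$, which can be at most $20$ only if $w_1\le b$: impossible for $b=1$, and violated for $b=2$ by the symmetric split ($16+12/2=22>20$). The root cause is double counting: $\hat G_k-G(x_k)$ is deterministic given $\mathcal{A},\mathcal{D}$, so it is \emph{not} averaged down by $b$, yet your decomposition pays for it twice, once inside the bias and again inside the variance, whereas the paper's single five-way decomposition pays for it once. The argument can be repaired for $b\ge 2$ by a weighted split with $w_1\le b$ (feasible, e.g.\ $w_1=2,w_2=5,w_3=4$ for $b=2$), and for $b=1$ by invoking Lemma \ref{VRNonCS-SCSG:Lemma:Bound-estimate-G-4} verbatim; but as stated the proposal does not establish the lemma for all $b\ge 1$.
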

Based on the above lemma, we can obtain the query complexity for both convex and non-convex 
problem. As the process of the proof are similar to that of Corollary 
\ref{VRNonCS-SCSG:Lemma:paraeters setting} and Corollary 
\ref{VRNonCS-SCSG:Non:Lemma:paraeters setting}, we give the following result directly. The 
difference of the parameters' setting are $K$, and $\eta$ due to the fact of the mini-batch.
\begin{corollary}\label{VRNonCS-SCSG-miniBatch:Lemma:paraeters setting}
	Suppose Assumption \ref{VRNonCS-SCSG:Assumption:f-strong}-
	\ref{VRNonCS-SCSG:Assumption:Middle-Bound} holds, in Algorithm \ref{VRNonCS-SCSG:AlgorithmII}, 
	for convex problem,	let $h=\mu$, the step size is $\eta  \le {b\mu 
	}/({{135L_f^2}})$, the subset size of $\cal A$ is $ A = \min \{ {n,128B_G^4L_F^2/{{{\mu 
					^2}}}} \}$, the subset size of $\mathcal{D}_1$ and $\mathcal{D}_2$ are both $D 
	= \min \{ 
	n,5( 16B_G^4L_F^2{H_1} + 
	4{H_2} )/(4\varepsilon \mu ^2) \}$,  the number 
	of the inner iteration is $K \ge 540L_f^2/(b\mu ^2)$, the number of outer iteration is 
	$S \ge 1/( \log ( 1/\rho  ) )\log ( 
	2E\| {\tilde x}_0 - x^* \|^2/\varepsilon )$.	The query complexity is 
	\begin{center}
		${\left( {D + KA} \right)S} = 
		\mathcal{O}\left( 
		{\left( {\min \left\{ {n,\frac{1}{{{\varepsilon\mu ^2}}}} \right\} + \frac{{L_f^2}}{{{b\mu 
							^2}}}\min 
				\left\{ {n,\frac{1}{{{\mu ^2}}}} \right\}} \right)\log \left( {1/\varepsilon } 
			\right)} 
		\right).$
	\end{center}
%
\end{corollary}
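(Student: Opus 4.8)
The plan is to mirror the convex analysis of Corollary~\ref{VRNonCS-SCSG:Lemma:paraeters setting} verbatim, with the single change that the mini-batch gradient-variance bound of Lemma~\ref{VRNonCS-SCSG-miniBatch:Lemma:Bound-estimate-G-4} replaces the single-sample bound of Lemma~\ref{VRNonCS-SCSG:Lemma:Bound-estimate-G-4}. First I would re-derive the one-epoch contraction of Theorem~\ref{VRNonCS-SCSG:Lemma:NewSequence} for Algorithm~\ref{VRNonCS-SCSG:AlgorithmII}. The derivation is identical except that the update uses the $b$-sample average $\Lambda$, whose error obeys Lemma~\ref{VRNonCS-SCSG-miniBatch:Lemma:Bound-estimate-G-4}; comparing this bound with Lemma~\ref{VRNonCS-SCSG:Lemma:Bound-estimate-G-4} shows that the only change is that the $L_f^2$ multiplying $\mathbb{E}\|x_k-\tilde x_s\|^2$ becomes $L_f^2/b$, while the $V$-part of that coefficient and the additive $V_1$-terms are untouched. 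Consequently the recursion coefficients carry over with $V$, $V_1$ and $\rho_3$ unchanged and the variance-derived $L_f^2$ inside $\rho_1,\rho_2$ replaced by $L_f^2/b$, i.e.\ $\rho_1=(2\mu-h-4V/h-(12L_f^2/b+10V)\eta)\eta$ and $\rho_2=2(2V/h+5(L_f^2/b+V)\eta)\eta$.

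With these coefficients in hand I would repeat the parameter selection. Setting $h=\mu$ and $A=\min\{n,128B_G^4L_F^2/\mu^2\}$ makes $V=\mathcal{O}(\mu^2)$, so the $V$-dependent terms in $\rho_1,\rho_2$ are dominated by the strong-convexity gain; this step is identical to the single-sample case precisely because $V$ does not involve $b$. The essential difference is that the gradient-variance term now scales as $L_f^2/b$, so the step size may be taken $b$ times larger, $\eta\le b\mu/(135L_f^2)$, while still keeping $\rho_1>0$ and $\rho=(1/K+\rho_2)/\rho_1$ bounded away from $1$. Balancing $1/K$ against the leading term $\rho_1\approx\mu\eta$ then requires only $K\ge 540L_f^2/(b\mu^2)$ inner iterations, a factor $b$ fewer than before. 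The set-size $D=\min\{n,5(16B_G^4L_F^2H_1+4H_2)/(4\varepsilon\mu^2)\}$ is chosen exactly as in Corollary~\ref{VRNonCS-SCSG:Lemma:paraeters setting} so that the additive term $\rho_3/(\rho_1(1-\rho))$ in (\ref{VRNonCS-SCSG:Lemma:NewSequence-convergence}), which is governed by the $b$-independent quantity $V_1$, is at most $\varepsilon/2$; taking $S\ge\log(2\mathbb{E}\|\tilde x_0-x^*\|^2/\varepsilon)/\log(1/\rho)$ drives the geometric term below $\varepsilon/2$, yielding $\mathbb{E}\|\tilde x_S-x^*\|^2\le\varepsilon$.

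Finally I would count queries. Each epoch spends $D$ queries on the snapshot $\nabla\hat f_{\mathcal D}(\tilde x_s)$ and, over its $K$ inner iterations, $A$ queries per iteration for $\hat G_k$ plus $4b$ queries for the $b$-sample average $\Lambda$, for a total of $(D+K(A+4b))S$. Substituting the parameter values gives $D=\mathcal{O}(\min\{n,1/(\varepsilon\mu^2)\})$, $KA=\mathcal{O}((L_f^2/(b\mu^2))\min\{n,1/\mu^2\})$ and $S=\mathcal{O}(\log(1/\varepsilon))$, which reproduces the claimed bound, with the $1/b$ improvement in the second summand coming directly from the shortened inner loop.

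The main obstacle is twofold. The principal technical point is confirming that the reduced variance $L_f^2/b$ propagates into $\rho_1,\rho_2$ exactly as claimed, so that scaling $\eta$ up by $b$ and $K$ down by $b$ simultaneously preserves $\rho_1>0$ and $\rho<1$; one must check that every $L_f^2$ appearing in the admissible step-size constraint is genuinely variance-derived (and hence earns the $1/b$), while the additive error controlled by $V_1$ remains independent of $b$ and therefore still forces $D=\mathcal{O}(1/(\varepsilon\mu^2))$. The secondary point is the query accounting: since $4bK=\mathcal{O}(L_f^2/\mu^2)$ whereas $KA=\mathcal{O}((L_f^2/(b\mu^2))\min\{n,1/\mu^2\})$, the extra gradient queries are absorbed into $KA$ only when $b\le A=\min\{n,1/\mu^2\}$, so I would state this natural mini-batch regime explicitly to justify collapsing $(D+K(A+4b))S$ to $(D+KA)S$.
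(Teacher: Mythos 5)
Your overall route --- rerun the proof of Theorem~\ref{VRNonCS-SCSG:Lemma:NewSequence} with Lemma~\ref{VRNonCS-SCSG-miniBatch:Lemma:Bound-estimate-G-4} substituted for Lemma~\ref{VRNonCS-SCSG:Lemma:Bound-estimate-G-4}, then repeat the parameter selection of Corollary~\ref{VRNonCS-SCSG:Lemma:paraeters setting} --- is exactly what the paper intends: it gives no separate proof of this corollary, only the remark that the argument is ``similar'' with $\eta$ and $K$ rescaled by $b$. However, the step you yourself flag as ``the principal technical point'' is precisely where the argument breaks, and it does break. In the proof of Theorem~\ref{VRNonCS-SCSG:Lemma:NewSequence}, the coefficient $12L_f^2$ in $\rho_1$ is a sum of two terms of different origin: $10L_f^2$ comes from $2\eta^2\mathbb{E}\|\nabla\tilde f_k-\nabla f(x_k)\|^2$ via Lemma~\ref{VRNonCS-SCSG:Lemma:Bound-estimate-G-4} (this part is variance-derived and does become $10L_f^2/b$ under Lemma~\ref{VRNonCS-SCSG-miniBatch:Lemma:Bound-estimate-G-4}), while the remaining $2L_f^2$ comes from $2\eta^2\mathbb{E}\|\nabla f(x_k)\|^2=2\eta^2\mathbb{E}\|\nabla f(x_k)-\nabla f(x^*)\|^2\le 2L_f^2\eta^2\mathbb{E}\|x_k-x^*\|^2$, a property of the \emph{full} gradient that is completely untouched by averaging $b$ stochastic gradients. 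The correct mini-batch coefficient is therefore ${\rho_1=\left(2\mu-h-4V/h-\left(2L_f^2+10L_f^2/b+10V\right)\eta\right)\eta}$, not $\left(2\mu-h-4V/h-\left(12L_f^2/b+10V\right)\eta\right)\eta$ as you assert. Requiring $\rho_1>0$ with $h=\mu$ and $V\le\mu^2/16$ then forces $2L_f^2\eta<\tfrac{3}{4}\mu$, i.e.\ $\eta=O(\mu/L_f^2)$ \emph{independently of} $b$; the proposed $\eta\le b\mu/(135L_f^2)$ violates this as soon as $b$ exceeds a fixed constant (roughly $b\approx 50$), and with it the choices $K\ge 540L_f^2/(b\mu^2)$ and $\rho<1$ collapse, so the mirrored argument yields no factor-$b$ speedup at all.

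To genuinely earn the $1/b$ one must handle $\eta^2\mathbb{E}\|\nabla f(x_k)\|^2$ by a different mechanism than the crude smoothness bound, e.g.\ co-coercivity $\langle\nabla f(x_k),x_k-x^*\rangle\ge\frac{\mu L_f}{\mu+L_f}\|x_k-x^*\|^2+\frac{1}{\mu+L_f}\|\nabla f(x_k)\|^2$, which absorbs that term whenever $\eta\le 1/(\mu+L_f)$; even then the step size is capped at $O(1/L_f)$, so the corollary can only hold under an explicit restriction of the form $b=O(L_f/\mu)$. That restriction appears neither in your proposal nor in the paper --- the paper's unproved statement inherits the same defect, so this is a gap in the result itself, not only in your write-up. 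Your secondary observation about query accounting is correct and well spotted: each inner iteration of Algorithm~\ref{VRNonCS-SCSG:AlgorithmII} costs $A+4b$ queries, so the advertised count $(D+KA)S$ is legitimate only when $b\le A=\min\{n,128B_G^4L_F^2/\mu^2\}$, another condition the paper silently omits. As written, though, your proposal asserts a recursion coefficient that is false and defers exactly the verification that fails, so it does not constitute a complete proof of the stated corollary.
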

\begin{corollary}\label{VRNonCS-SCSG-miniBatch-non:query complexity}
	Suppose Assumption \ref{VRNonCS-SCSG:Assumption:G}-
\ref{VRNonCS-SCSG:Assumption:Middle-Bound} holds, in Algorithm \ref{VRNonCS-SCSG:AlgorithmII}, 
Let $h =\sqrt {b/\eta}$,  	the step size is  $\eta  = {b^{3/5}}\min \{ 1/n^{2/5},\varepsilon 
^{2/5} 
\}$, the set-size 
of $\cal A$ is $A = \min \left\{ {n,\mathcal{O}\left( {b/\eta } \right)} \right\}$, the 
set-size of the subset 
$\mathcal{D}_1$ and 
$\mathcal{D}_2$ are $ D = \min \left\{ {n,\mathcal{O}(1/\varepsilon )} \right\}$,  the number 
of 
inner iteration is $K \le 
\mathcal{O}\left( {b^{1/2}/({\eta ^{3/2}})} \right)$, the total number of iteration is $T = 
\mathcal{O}\left( {1/\left( 
	{\varepsilon \eta } \right)} \right)$, in order to obtain 
obtain	$\mathbb{E}[\| \nabla f(\hat x_k^s ) \|^2]  \le 
\varepsilon$.The query complexity is 
\begin{center}
	$\frac{1}{{{b^{1/5}}}}\mathcal{O}\left( {\min \left\{ {\frac{1}{{{\varepsilon 
						^{9/5}}}},\frac{{{n^{4/5}}}}{\varepsilon }} \right\}} \right)$
\end{center}
%
\end{corollary}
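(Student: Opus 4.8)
The plan is to re-use, almost verbatim, the non-convex analysis already built for Algorithm \ref{VRNonCS-SCSG:AlgorithmI}, feeding into it the single sharper ingredient that the mini-batch supplies. The Lyapunov recursion of Lemma \ref{VRNonCS-SCSG:Non:Lemma:NewSequence} and its telescoped consequence, Theorem \ref{VRNonCS-SCSG:Non:Lemma:Convergence form}, were assembled entirely out of the per-step gradient bound of Lemma \ref{VRNonCS-SCSG:Lemma:Bound-estimate-G-4}. Lemma \ref{VRNonCS-SCSG-miniBatch:Lemma:Bound-estimate-G-4} says that averaging $b$ independent samples changes nothing in that bound except that the coefficient $L_f^2/(B_G^4L_F^2)$ becomes $L_f^2/(bB_G^4L_F^2)$; equivalently, the term $L_f^2\,\mathbb{E}\|x_k-\tilde x_s\|^2$ is divided by $b$ while the $H_1$, $H_2$ and indicator terms are left intact. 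So I would re-derive Lemma \ref{VRNonCS-SCSG:Non:Lemma:NewSequence} carrying a factor $1/b$ on every standalone $L_f^2$ in $c_k$, $u_k$ and $J_k$. This is the non-convex counterpart of the replacement that turns $K\ge 540L_f^2/\mu^2$ in Corollary \ref{VRNonCS-SCSG:Lemma:paraeters setting} into $K\ge 540L_f^2/(b\mu^2)$ in Corollary \ref{VRNonCS-SCSG-miniBatch:Lemma:paraeters setting}.

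With the $b$-discounted recursion in hand I would choose the parameters so that the three requirements of the non-convex proof survive. Taking $D=\min\{n,\mathcal{O}(1/\varepsilon)\}$ drives the variance $W_1$ of (\ref{VRNonCS-SCSG:Non:Lemma:NewSequence-W_1}), hence the residual $J_0=\mathcal{O}(W_1\eta)$, down to the level where $J_0/u_0=\mathcal{O}(\varepsilon)$, exactly as in Corollary \ref{VRNonCS-SCSG:Non:Lemma:paraeters setting}. Taking $A=\min\{n,\mathcal{O}(b/\eta)\}$ forces the $A,D$-variance $W$ of (\ref{VRNonCS-SCSG:Non:Lemma:NewSequence-W}) down to $\mathcal{O}(\eta/b)$, so that the $4hW$ piece is dominated by the $2/h$ piece in the geometric factor $(\tfrac{2}{h}+4hW)\eta$ of the recursion (\ref{VRNonCS-SCSG:Non:Lemma:NewSequence-c_k}). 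With $h=\sqrt{b/\eta}$ that factor is $\mathcal{O}(\eta^{3/2}/\sqrt b)$, so the backward recursion from the terminal value $c_K=0$ gives $c_0=\mathcal{O}(\sqrt{\eta/b})$ and $hc_0=\mathcal{O}(1)$, which keeps $u_0=(\tfrac{1}{2}-hc_1)\eta-(L_f+2c_1)\eta^2$ a positive multiple of $\eta$ while allowing as many as $K\le\mathcal{O}(b^{1/2}/\eta^{3/2})$ inner steps. Feeding $T=\mathcal{O}(1/(\varepsilon\eta))$ total steps into Theorem \ref{VRNonCS-SCSG:Non:Lemma:Convergence form} then delivers $\mathbb{E}\|\nabla f(\hat x_k^s)\|^2\le\varepsilon$. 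I expect the main obstacle to be this coupled bookkeeping: checking that $c_k$, $u_k$ and $J_k$ stay in range simultaneously under the joint rescaling $h\mapsto\sqrt{b/\eta}$, $\eta\mapsto b^{3/5}(\cdot)$, because the positivity of $u_0$ — on which the entire bound rests — survives only while the constant hidden in $K$ keeps $hc_0$ below $\tfrac{1}{2}$.

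Finally I would turn iteration counts into query counts. Each epoch spends $D$ queries on the snapshot and each of the $K$ inner steps spends $A$ queries on $\hat G_k$ plus $4b$ queries on the $b$ gradient samples, for a total of $\tfrac{T}{K}(D+KA)+\mathcal{O}(Tb)$. Substituting $D$, $A$, $K$, $T$ and the step $\eta=b^{3/5}\min\{1/n^{2/5},\varepsilon^{2/5}\}$ — the optimal non-convex step inflated by $b^{3/5}$ — and splitting into the regimes $n\ge 1/\varepsilon$ and $n\le 1/\varepsilon$, the snapshot contribution $\tfrac{T}{K}D$ and the inner contribution $TA$ each reduce to $\tfrac{1}{b^{1/5}}\min\{1/\varepsilon^{9/5},n^{4/5}/\varepsilon\}$, while the leftover gradient cost $\mathcal{O}(Tb)=\mathcal{O}(b/(\varepsilon\eta))$ is a factor $\mathcal{O}(\eta)$ smaller and is absorbed (here $\eta\le 1$ for the relevant batch sizes). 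In effect this is Corollary \ref{VRNonCS-SCSG:Non:Lemma:paraeters setting} with the step inflated from $\eta$ to $b^{3/5}\eta$ and the inner batch cap from $\mathcal{O}(1/\eta)$ to $\mathcal{O}(b/\eta)$; the factor $b^{3/5}$ enters the snapshot term through $\eta^{1/2}/b^{1/2}$ and the inner term through $b/\eta^2$, and in the balanced optimum the two reductions combine to exactly the announced $1/b^{1/5}$ speedup.
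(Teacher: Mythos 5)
Your proposal is correct and takes essentially the same route as the paper: feed the mini-batch bound of Lemma \ref{VRNonCS-SCSG-miniBatch:Lemma:Bound-estimate-G-4} into the non-convex Lyapunov analysis with $h=\sqrt{b/\eta}$, then evaluate $\frac{T}{K}\left(D+KA\right)$ at the stated parameter choices and balance the snapshot term $\frac{T}{K}D$ against the inner term $TA$ at $\eta=b^{3/5}\min\{1/n^{2/5},\varepsilon^{2/5}\}$ to obtain the $1/b^{1/5}$ gain. You are in fact more complete than the paper, whose printed proof consists only of this final query arithmetic (the convergence verification being asserted by analogy with Corollaries \ref{VRNonCS-SCSG:Lemma:paraeters setting} and \ref{VRNonCS-SCSG:Non:Lemma:paraeters setting}) and which silently omits the $\mathcal{O}(Tb)$ per-step sampling cost that you correctly argue is dominated for the relevant batch sizes.
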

From the above result of the query complexity of the convex and non-convex problem, we can see that 
both of their step size $\eta$ and the  number of inner iteration $K$ increase. These two 
key parameters lead to the improved the query complexity of both convex and non-convex problem. 
\section{Conclusion}\label{VRNonCS-SCSG:Section:Conclusion}
In this paper, we propose the variance reduction based method for the convex and non-convex 
composition problem. We apply the stochastically 
controlled stochastic gradient to estimate  inner function $G(x)$ and  the gradient of 
$f(x)$. The query complexity of our proposed algorithm is better than or equal to the current 
methods on both  convex and non-convex function. Furthermore, we also present the corresponding 
mini-batch version of the proposed method, in which the query complexities are improved as well. In 
the future, we can consider the non-smooth function of the composition problem with the method of 
the 
stochastically controlled stochastic gradient. 

{\small
	\bibliographystyle{unsrt}
	\bibliography{SCSG_SCVR_Bib}
}
\appendix
\section{Technical Tool}
\begin{lemma2}\ref{VRNonCS-SCSG:Appendix:Lemma:Inequation-indepent}. 
	If $v_1,...,v_n\in \mathbb{R}^M$ satisfy 
	$\sum\nolimits_{i = 1}^n {{v_i}}  = \vec 0$, and $\cal A$ is a non-empty, 
	uniform 
	random subset of $[m]$, then
	\begin{align*}
	{\mathbb{E}_{\cal A}} {{{\left\| {\frac{1}{A}\sum\nolimits_{b \in 
						{\cal 
							A}} 
					{{v_b}} 
				} 
				\right\|}^2}}  \le \frac{{\mathbb{I}\left( {A < n} 
			\right)}}{A}\frac{1}{n}\sum\limits_{i = 1}^n {v_i^2}.
	\end{align*}
	Furthermore, if the elements in $\cal A$ are independent, then 
	\begin{align*}
	{\mathbb{E}_\mathcal{A}}{{{\left\| {\frac{1}{A}\sum\nolimits_{b \in 
						\mathcal{A}} {{v_b}} } 
				\right\|}^2}} = \frac{1}{{An}}\sum\limits_{i = 1}^n 
	{v_i^2}. 
	\end{align*}
\end{lemma2}
\begin{proof} Based on the  $\sum\nolimits_{i = 1}^m {{v_i}}  = \vec 0$, and 
	permutation and combination, 
	For the case that $\cal A$ is a non-empty, 
	uniformly random subset of $[m]$, we 
	have 
	\begin{align*}
	&{\mathbb{E}_{\cal A}} {{{\left\| {\sum\nolimits_{b \in \mathcal{A}} 
					{{v_b}} } 	\right\|}^2}} \\ =& {\mathbb{E}_{\cal 
			A}}\left[ {\sum\nolimits_{b 
			\in {\cal A}} 	{{{\left\| 	{{v_b}} \right\|}^2}} } \right] 
	+ 
	\frac{1}{{C_n^A}}\sum\limits_{i 	\in [n]} 
	{\left\langle {{v_i},\frac{{C_{n - 1}^{A - 1}\left( {A - 1} 
					\right)}}{{n - 
					1}}\sum\limits_{i \ne j} {{v_j}} } \right\rangle 
	} 
	\\
	=& A\frac{1}{n}\sum\nolimits_{i = 1}^n {v_i^2}  + \frac{{A\left( {A - 
				1} 
			\right)}}{{n\left( {n - 1} \right)}}\sum\nolimits_{i \in 
		[n]} 
	{\left\langle 
		{{v_i},\sum\nolimits_{i \ne j} {{v_j}} } \right\rangle } \\
	=& A\frac{1}{n}\sum\nolimits_{i = 1}^n {v_i^2}  + \frac{{A\left( {A - 
				1} 
			\right)}}{{n\left( {n - 1} \right)}}\sum\nolimits_{i \in 
		[n]} 
	{\left\langle 
		{{v_i}, - {v_i}} \right\rangle }\\ 
	=& \frac{{A\left( {n - A} \right)}}{{\left( {n - 1} 
			\right)}}\frac{1}{n}\sum\nolimits_{i = 1}^n {v_i^2}\\
	\le& A\mathbb{I}\left( {A < n} \right)\frac{1}{n}\sum\nolimits_{i = 
		1}^n 
	{v_i^2}. 
	\end{align*}
	Thus, we have
	\begin{align*}
	&{\mathbb{E}_{\cal A}} {{{\left\| \frac{1}{A}{\sum\nolimits_{b \in \mathcal{A}} 
					{{v_b}} } 	\right\|}^2}}=\frac{1}{A^2}{\mathbb{E}_{\cal A}} {{{\left\| 
				{\sum\nolimits_{b \in \mathcal{A}} 
					{{v_b}} } 	\right\|}^2}}\le\frac{{\mathbb{I}\left( {A < n} 
			\right)}}{A}\frac{1}{n}\sum\limits_{i = 1}^n {v_i^2}.
	\end{align*}
	
	For the case that the element in $\cal A$ is 
	randomly and independently selected from $[m]$, we 
	have

	\begin{align}
	{\mathbb{E}_\mathcal{A}} {{{\left\| {\sum\nolimits_{b \in 
						\mathcal{A}} {{v_b}} } 
				\right\|}^2}}  =& {\mathbb{E}_\mathcal{A}}\left[ 
	{\sum\nolimits_{b \in \mathcal{A}} 
		{{{\left\| {{v_b}} 
					\right\|}^2}} } \right] + 
	2{\mathbb{E}_\mathcal{A}}\left[ 
	{\sum\nolimits_{1 
			\le b < A} 
		{\left\langle {{v_b},\sum\nolimits_{b < k \le A} {{v_k}} } 
			\right\rangle } } \right]\nonumber\\
	=& B\frac{1}{n}\sum\nolimits_{i = 1}^n {{{\left\| {{v_i}} 
				\right\|}^2}}  + 2{\mathbb{E}_\mathcal{A}}\left[ 
	{\sum\nolimits_{1 \le b 
			< A} 
		{\left\langle {\mathbb{E}\left[ v \right],\sum\nolimits_{b < k 
					\le 
					A} 
				{{v_k}} } 
			\right\rangle } } \right]\nonumber\\
	=& A\frac{1}{n}\sum\nolimits_{i = 1}^n {{{\left\| {{v_i}} 
				\right\|}^2}}  + A\left( {A - 1} \right){\left\| 
		{\mathbb{E}\left[ v 
			\right]} 
		\right\|^2}\\
	=& A\frac{1}{n}\sum\nolimits_{i = 1}^n {{{\left\| {{v_i}} 
				\right\|}^2}} 
	\nonumber.
	\end{align} 	
	
\end{proof}

\begin{lemma}\label{VRNonCS-SCSG:Lemma:GeometriProgression}
	For the sequences that satisfy ${c_{k - 1}} = {c_k}Y + U$, where $Y>1$, $U>0$, $k\ge 1$ and 
	$c_0>0$, we can get the geometric progression
	
	\centering{${c_k} + \frac{U}{{Y - 1}} = \frac{1}{Y}\left( {{c_{k - 1}} + \frac{U}{{Y - 1}}} 
		\right),$}
	\leftline{then $c_k$ can be represented as decrease sequences,}
	\centering{${c_k} = {\left( {\frac{1}{Y}} \right)^k}\left( {{c_0} + \frac{U}{{Y - 1}}} \right) 
		- \frac{U}{{Y - 1}}.$}
\end{lemma}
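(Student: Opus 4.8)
The plan is to treat this as a first-order linear recurrence and reduce it to a pure geometric progression by shifting by the recurrence's fixed point. The recurrence $c_{k-1} = c_k Y + U$ inverts to $c_k = \frac{1}{Y}c_{k-1} - \frac{U}{Y}$, whose fixed point $c^\ast$ solves $c^\ast = \frac{1}{Y}c^\ast - \frac{U}{Y}$, giving $c^\ast = -\frac{U}{Y-1}$. This is precisely why the constant $\frac{U}{Y-1}$ appears in the statement: subtracting off the fixed point linearizes the recurrence into a homogeneous one, and the whole argument hinges on recognizing this shift.

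First I would verify the claimed geometric-progression identity (the first display) directly. Substituting $c_k = \frac{1}{Y}c_{k-1} - \frac{U}{Y}$ into the left-hand side $c_k + \frac{U}{Y-1}$ and collecting the $U$-terms, the one nontrivial algebraic step is the partial-fraction identity $\frac{1}{Y-1} - \frac{1}{Y} = \frac{1}{Y(Y-1)}$, which turns $-\frac{U}{Y} + \frac{U}{Y-1}$ into $\frac{U}{Y(Y-1)} = \frac{1}{Y}\cdot\frac{U}{Y-1}$. This matches the right-hand side $\frac{1}{Y}\left(c_{k-1} + \frac{U}{Y-1}\right)$ term by term, establishing the first identity.

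Next I would introduce the shifted sequence $d_k := c_k + \frac{U}{Y-1}$, so that the identity just proved reads $d_k = \frac{1}{Y}d_{k-1}$. Since $Y > 1$, this is a genuine geometric sequence with ratio $\frac{1}{Y} \in (0,1)$, and unrolling from $d_0 = c_0 + \frac{U}{Y-1}$ gives $d_k = \left(\frac{1}{Y}\right)^k d_0$ by a one-line induction on $k$. Substituting back $c_k = d_k - \frac{U}{Y-1}$ yields the claimed closed form (the second display). Finally, since $c_0 > 0$ and $U > 0$ the coefficient $c_0 + \frac{U}{Y-1}$ is positive, and because $\left(\frac{1}{Y}\right)^k$ decreases in $k$, the sequence $c_k$ is indeed decreasing toward its limit $-\frac{U}{Y-1}$, as asserted.

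As for the main obstacle, there is none of real substance: the argument is entirely elementary. The only point that requires foresight is guessing the correct shift constant $\frac{U}{Y-1}$ — i.e. the fixed point that makes the recurrence homogeneous — after which everything reduces to routine algebraic verification and a trivial induction.
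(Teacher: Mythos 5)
Your proof is correct and follows essentially the same route as the paper: the paper states this lemma without a separate proof, with the key shifted identity $c_k + \frac{U}{Y-1} = \frac{1}{Y}\bigl(c_{k-1} + \frac{U}{Y-1}\bigr)$ embedded in the statement itself, and your argument simply fills in the routine details (fixed-point shift, verification of the identity, and unrolling the resulting geometric sequence). Nothing differs in substance, and your added observation that $c_0 + \frac{U}{Y-1} > 0$ justifies the ``decreasing'' claim cleanly.
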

\section{Bound analysis of SC-SCSG for the composition problem}
\begin{lemma2}\ref{VRNonCS-SCSG:Lemma:Bound-estimate-G-2}
	Suppose Assumption 
	\ref{VRNonCS-SCSG:Assumption:G} and \ref{VRNonCS-SCSG:Assumption:Middle-Bound} holds, for 
	${{\hat G}_k}$ defined in (\ref{VRNonCS-SCSG:Definition:Estimate-G}) with $D =\left|{\cal 
		D}_1\right|$ 
	and  $A =\left|{\cal A}\right|$, 
	we have
	\begin{align}
	{\mathbb{E}_{\mathcal{A},\mathcal{D}_1}}{\| {{{\hat G}_k} - G({x_k})} \|^2} \le 
	4\left( {\frac{{\mathbb{I}\left( {A < n} \right)}}{A} + \frac{{\mathbb{I}\left( {D < n} 
				\right)}}{D}} 
	\right)B_G^2\mathbb{E}{\left\| {{x_k} - {{\tilde x}_s}} \right\|^2} + 
	2\frac{{\mathbb{I}\left( {D < 
				n} 
			\right)}}{D}{H_1}.
	\end{align}
\end{lemma2}
\begin{proof}
	By the definition of ${{\hat G}_k}$  in (\ref{VRNonCS-SCSG:Definition:Estimate-G}), we have
	\begin{align*}
	\mathbb{E}{\| {{{\hat G}_k} - G({x_k})} \|^2} =& \mathbb{E}{\| {{{\hat 
					G}_k} - {G_{\mathcal{D}_{1}}}({x_k}) + {G_{\mathcal{D}_{1}}}({x_k}) - G({x_k})} 
		\|^2}\\
	\mathop  \le \limits^{\scriptsize \textcircled{\tiny{1}}}& 2\mathbb{E}{\| {{{\hat 
					G}_k} - {G_{\mathcal{D}_{1}}}({x_k})} \|^2} + 
	2\mathbb{E}{\left\| {{G_{\mathcal{D}_{1}}}({x_k}) - G({x_k})} \right\|^2}\\
	\mathop  \le \limits^{\scriptsize \textcircled{\tiny{2}}}& 4\left( {\frac{{\mathbb{I}\left( 
				{A < n} 
				\right)}}{A} + \frac{{\mathbb{I}\left( {D < n} \right)}}{D}} 
	\right)B_G^2\mathbb{E}{\left\| {{x_k} - {{\tilde x}_s}} \right\|^2} + 
	2\frac{{\mathbb{I}\left( {D < 
				n} 
			\right)}}{D}{H_1},
	\end{align*}
	where 
	${\small\textcircled{\scriptsize{1}}}$ follows from 
	$||a_1+a_2||^2\le 2a_1^2+2a_2^2$;
	${\small\textcircled{\scriptsize{2}}}$ is based on  Assumption 
	\ref{VRNonCS-SCSG:Assumption:Middle-Bound} and the following inequality: 
	Through adding and subtracting the term $ 
	{G({x_k}) - G({{\tilde x}_s})}$, we have
	\begin{align*}
	&\mathbb{E}{\| {{{\hat G}_k} - {G_{ \mathcal{D}_{1}}}({x_k})} \|^2}\\
	=& \mathbb{E}{\left\| {{G_{{{\cal A}}}}({x_k}) - {G_{{{\cal A}}}}({{\tilde x}_s}) + 
			{G_{\mathcal{D}_{1}}}({{\tilde x}_s}) - {G_{ \mathcal{D}_{1}}}({x_k})} \right\|^2}\\
	=& \mathbb{E}{\left\| {{G_{{{\cal A}}}}({x_k}) - {G_{{{\cal A}}}}({{\tilde x}_s}) - 
			\left( 
			{G({x_k}) - G({{\tilde x}_s})} \right) + \left( {G({x_k}) - G({{\tilde x}_s})} \right) 
			+ 
			{G_{ \mathcal{D}_{1}}}({{\tilde x}_s}) - {G_{\mathcal{D}_{1}}}({x_k})} \right\|^2}\\
	\mathop  \le \limits^{\scriptsize \textcircled{\tiny{1}}}& 2\mathbb{E}{\left\| {{G_{{{\cal 
							A}}}}({x_k}) - {G_{{{\cal A}}}}({{\tilde x}_s}) - 
			\left( 
			{G({x_k}) - G({{\tilde x}_s})} \right))} \right\|^2} + 2\mathbb{E}{\left\| 
		{{G_{\mathcal{D}_{1}}}({{\tilde 
					x}_s}) - {G_{\mathcal{D}_{1}}}({x_k}) - \left( {G({{\tilde x}_s}) - G({x_k})} 
			\right)} 
		\right\|^2}\\
	\mathop  \le \limits^{\scriptsize \textcircled{\tiny{2}}}& 2\frac{{\mathbb{I}\left( {A < n} 
			\right)}}{A}{\mathbb{E}_i}{\left\| {{G_i}({{\tilde x}_s}) - 
			{G_i}({x_k})} \right\|^2} + 2\frac{{\mathbb{I}\left( {D < n} 
			\right)}}{D}{\mathbb{E}_i}{\left\| 
		{{G_i}({{\tilde x}_s}) - {G_i}({x_k})} \right\|^2}\\
	\mathop  \le \limits^{\scriptsize \textcircled{\tiny{3}}}& 2\frac{{\mathbb{I}\left( {A < n} 
			\right)}}{A}B_G^2\mathbb{E}{\left\| {{x_k} - {{\tilde x}_s}} 
		\right\|^2} 
	+ 2\frac{{\mathbb{I}\left( {D < n} \right)}}{D}B_G^2\mathbb{E}{\left\| {{x_k} - {{\tilde 
					x}_s}} 
		\right\|^2}\\
	=& 2\left( {\frac{{\mathbb{I}\left( {A < n} \right)}}{A} + \frac{{\mathbb{I}\left( {D < n} 
				\right)}}{D}} 
	\right)B_G^2\mathbb{E}{\left\| {{x_k} - {{\tilde x}_s}} \right\|^2},
	\end{align*}
	where 
	${\small\textcircled{\scriptsize{1}}}$ follows from 
	$||a+b||^2\le 2a^2+2b^2$;
	${\small\textcircled{\scriptsize{2}}}$ is based on Assumption 
	\ref{VRNonCS-SCSG:Assumption:Middle-Bound};
	${\small\textcircled{\scriptsize{3}}}$ follows from the bounded function of $G$ in Assumption 
	\ref{VRNonCS-SCSG:Assumption:G}.
\end{proof}
\begin{lemma2}\ref{VRNonCS-SCSG:Lemma:Bound-estimate-G-3}
	Suppose 
	Assumption 
	\ref{VRNonCS-SCSG:Assumption:G}, \ref{VRNonCS-SCSG:Assumption:F}, 
	\ref{VRNonCS-SCSG:AssumptionIndependent} and
	\ref{VRNonCS-SCSG:Assumption:Middle-Bound} holds, for ${{\hat G}_k}$ defined in 
	(\ref{VRNonCS-SCSG:Definition:Estimate-G}) and ${\nabla {{\tilde f}_k}}$ defined in 
	(\ref{VRNonCS-SCSG:Definition:Estimate-f}) with $\mathcal{D} = \left[ 
	{{\mathcal{D}_1},{\mathcal{D}_2}} \right]$ and $D =\left|{\cal D}_1\right|=\left|{\cal 
		D}_2\right|$, we have
	\begin{align*}
	&{\mathbb{E}_{\mathcal{A,D}}}{\| {{\mathbb{E}_{i_k,j_k}}\left[ {\nabla {{\tilde f}_k}} 
			\right] - \nabla 
			f({x_k})} 
		\|^2}\\
	\le&4B_G^4L_F^2\left( {4\frac{{\mathbb{I}\left( {A < n} \right)}}{A} + 
		4\frac{{\mathbb{I}\left( {D < n} 
				\right)}}{D}} \right)\mathbb{E}{\left\| {{x_k} - {{\tilde x}_s}} \right\|^2} + 
	16B_G^2L_F^2\frac{{\mathbb{I}(D 
			< n)}}{D}{H_1} + 4\frac{{\mathbb{I}(D^2 < n^2)}}{{D^2}}{H_2},
	\end{align*}
\end{lemma2}
\begin{proof}
	Through adding and subtracting the terms of ${(\partial G({x_k}))^\mathsf{T}}\nabla 
	F(G({x_k})),{(\partial {G_{{{\cal D}_1}}}({{\tilde x}_s}))^\mathsf{T}}\nabla {F_{{{\cal 
					D}_1}}}(G({{\tilde x}_s})),{(\partial G({{\tilde x}_s}))^\mathsf{T}}\nabla 
	F(G({{\tilde 
			x}_s}))$, we have
	\begin{align*}
	&{\mathbb{E}_{\mathcal{A,D}}}{\| {{\mathbb{E}_{i_k,j_k}}\left[ {\nabla {{\tilde f}_k}} 
			\right] - \nabla 
			f({x_k})} 
		\|^2}\\
	=&\mathbb{E}{\left\| {{(\partial G({x_k}))^\mathsf{T}}\nabla F({{\hat G}_k}) - {(\partial 
				G({{\tilde 
						x}_s}))^\mathsf{T}}\nabla F({G_{{{\cal D}_1}}}({{\tilde x}_s})) + \nabla 
			{{\hat 
					f}_\mathcal{D}}({{\tilde 
					x}_s}) - \nabla f({x_k})} \right\|^2}\\
	\mathop  \le \limits^{\scriptsize \textcircled{\tiny{1}}}& 4\mathbb{E}{\left\| {{(\partial 
				G({x_k}))^\mathsf{T}}\nabla F({{\hat G}_k}) - {(\partial 
				G({x_k}))^\mathsf{T}}\nabla F(G({x_k}))} \right\|^2}\\
	&+ 4\mathbb{E}{\left\| {{(\partial G({{\tilde x}_s}))^\mathsf{T}}\nabla F(G({{\tilde 
					x}_s})) - 
			{(\partial 
				G({{\tilde x}_s}))^\mathsf{T}}\nabla F({G_{{{\cal D}_1}}}({{\tilde x}_s}))} 
		\right\|^2}\\
	&+ 4\mathbb{E}{\left\| {\nabla {{\hat f}_\mathcal{D}}({{\tilde x}_s}) - {(\partial 
				{G_{{{\cal 
								D}_1}}}({{\tilde 
						x}_s}))^\mathsf{T}}\nabla {F_{{{\cal D}_2}}}(G({{\tilde x}_s}))} 
		\right\|^2}\\
	&+ 4\mathbb{E}{\left\| {{(\partial {G_{{{\cal D}_1}}}({{\tilde x}_s}))^\mathsf{T}}\nabla 
			{F_{{{\cal 
							D}_2}}}(G({{\tilde x}_s})) - {(\partial G({{\tilde 
						x}_s}))^\mathsf{T}}\nabla F(G({{\tilde 
					x}_s}))} 
		\right\|^2}\\
	\mathop  \le \limits^{\scriptsize \textcircled{\tiny{2}}}& 4B_G^2L_F^2\mathbb{E}{\left\| 
		{{{\hat 
					G}_k} - G({x_k})} \right\|^2} + 4B_G^2L_F^2\mathbb{E}{\left\| 
		{G({{\tilde x}_s}) - {G_{{{\cal D}_1}}}({{\tilde x}_s})} \right\|^2} + 
	4B_G^2L_F^2\mathbb{E}{\left\| 
		{G({{\tilde x}_s}) - {G_{{{\cal D}_1}}}({{\tilde x}_s})} \right\|^2} + 
	4\frac{{\mathbb{I}(D^2 < n^2)}}{{D^2}}{H_2}\\
	\mathop  \le \limits^{\scriptsize \textcircled{\tiny{3}}}& 4B_G^4L_F^2\left( 
	{4\frac{{\mathbb{I}\left( {A < n} \right)}}{A} + 4\frac{{\mathbb{I}\left( {D < n} 
				\right)}}{D}} \right)\mathbb{E}{\left\| {{x_k} - {{\tilde x}_s}} \right\|^2} + 
	16B_G^2L_F^2\frac{{\mathbb{I}(D 
			< n)}}{D}{H_1} + 4\frac{{\mathbb{I}(D^2 < n^2)}}{{D^2}}{H_2},
	\end{align*}
	where ${\small\textcircled{\scriptsize{1}}}$ follows from $||a_1+a_2+a_3+a_4||^2\le 
	4a_1^2+4a_2^2+4a_3^2+4a_4^2$;
	${\small\textcircled{\scriptsize{2}}}$ is based on the bounded Jacobian of $G$ and the 
	smoothness of $F$ in Assumption \ref{VRNonCS-SCSG:Assumption:G} and 
	\ref{VRNonCS-SCSG:Assumption:F}, and the upper 
	bound of variance  in Assumption 
	\ref{VRNonCS-SCSG:Assumption:Middle-Bound} and Lemma 
	\ref{VRNonCS-SCSG:Appendix:Lemma:Inequation-indepent-double}.
	${\small\textcircled{\scriptsize{3}}}$ is based on Lemma 
	\ref{VRNonCS-SCSG:Lemma:Bound-estimate-G-2} and Assumption 
	\ref{VRNonCS-SCSG:Assumption:Middle-Bound}.
\end{proof}
\begin{lemma2}\ref{VRNonCS-SCSG:Lemma:Bound-estimate-G-4}
	Suppose 
	Assumption \ref{VRNonCS-SCSG:Assumption:G}-\ref{VRNonCS-SCSG:Assumption:Middle-Bound} holds, 
	for 
	${{\hat 
			G}_k}$ defined in (\ref{VRNonCS-SCSG:Definition:Estimate-G}) and ${\nabla {{\tilde 
				f}_k}}$ defined in (\ref{VRNonCS-SCSG:Definition:Estimate-f}) with $\mathcal{D} = 
	\left[ 
	{{\mathcal{D}_1},{\mathcal{D}_2}} \right]$ and $D =\left|{\cal 
		D}_1\right|=\left|{\cal 
		D}_2\right|$, we have
	\begin{align*}
	&\mathbb{E}_{i_k,j_k,\cal A, \cal D}{\| {\nabla {{\tilde f}_k} - \nabla f\left( {{x_k}} 
			\right)} \|^2}\\
	\le&5B_G^4L_F^2\left( 
	{\frac{L_f^2}{B_G^4L_F^2} + 
		4\frac{{\mathbb{I}\left( {A < n} \right)}}{A} + 4\frac{{\mathbb{I}\left( {D < n} 
				\right)}}{D}} \right)\mathbb{E}{\left\| {{x_k} - {{\tilde x}_s}} \right\|^2} + 
	20B_G^2L_F^2\frac{{\mathbb{I}(D < 
			n)}}{D}{H_1} + 5\frac{{\mathbb{I}(D^2 < n^2)}}{{D^2}}{H_2},
	\end{align*}
\end{lemma2}

\begin{proof}
	Through adding and subtracting the term of ${(\partial {G_j}({x_k}))^\mathsf{T}}\nabla 
	{F_i}(G({x_k}))$, ${(\partial {G_j}({{\tilde x}_s}))^\mathsf{T}}\nabla {F_i}(G({{\tilde 
			x}_s}))$, 
	${(\partial 
		G({{\tilde x}_s}))^\mathsf{T}}\nabla F(G({{\tilde x}_s}))$, ${(\partial {G_{{{\cal 
						D}_1}}}({{\tilde 
				x}_s}))^\mathsf{T}}\nabla {F_{{{\cal D}_1}}}(G({{\tilde x}_s}))$, we have
	\begin{align*}
	&\mathbb{E}{\| {\nabla {{\tilde f}_k} - \nabla f\left( {{x_k}} \right)} \|^2}\\
	=&\mathbb{E}{\left\| {{(\partial G_j({x_k}))^\mathsf{T}}\nabla F_i({{\hat G}_k}) - {(\partial 
				G_j({{\tilde 
						x}_s}))^\mathsf{T}}\nabla F_i({G_{\mathcal{D}_{1}}}({{\tilde x}_s})) + 
			\nabla 
			{\hat f_{\cal 
					D}}({{\tilde x}_s}) 
			- 
			\nabla f({x_k})} \right\|^2}\\
	\mathop  \le \limits^{\scriptsize \textcircled{\tiny{1}}}& 5\mathbb{E}{\left\| {{(\partial 
				{G_j}({x_k}))^\mathsf{T}}\nabla {F_i}(G({x_k})) - {(\partial 
				{G_j}({{\tilde x}_s}))^\mathsf{T}}\nabla {F_i}(G({{\tilde x}_s})) - \left( {\nabla 
				f({x_k}) - 
				{(\partial G({{\tilde x}_s}))^\mathsf{T}}\nabla F(G({{\tilde x}_s}))} \right)} 
		\right\|^2}\\
	&+ 5\mathbb{E}{\left\| {{(\partial {G_j}({x_k}))^\mathsf{T}}\nabla {F_i}({{\hat G}_k}) - 
			{(\partial 
				{G_j}({x_k}))^\mathsf{T}}\nabla {F_i}(G({x_k}))} \right\|^2}\\
	&+ 5\mathbb{E}{\left\| {{(\partial {G_j}({{\tilde x}_s}))^\mathsf{T}}\nabla {F_i}(G({{\tilde 
					x}_s})) - 
			{(\partial {G_j}({{\tilde x}_s}))^\mathsf{T}}\nabla {F_i}({G_{{{\cal D}_1}}}({{\tilde 
					x}_s}))} 
		\right\|^2}\\
	&+ 5\mathbb{E}{\left\| {\nabla {{\hat f}_\mathcal{D}}({{\tilde x}_s}) - {(\partial {G_{{{\cal 
								D}_1}}}({{\tilde 
						x}_s}))^\mathsf{T}}\nabla {F_{{{\cal D}_2}}}(G({{\tilde x}_s}))} 
		\right\|^2}\\
	&+ 5\mathbb{E}{\left\| {{(\partial {G_{{{\cal D}_1}}}({{\tilde x}_s}))^\mathsf{T}}\nabla 
			{F_{{{\cal 
							D}_2}}}(G({{\tilde x}_s})) - {(\partial G({{\tilde 
						x}_s}))^\mathsf{T}}\nabla F(G({{\tilde 
					x}_s}))} 
		\right\|^2}\\
	\mathop  \le \limits^{\scriptsize \textcircled{\tiny{2}}}& 5L_f^2\mathbb{E}{\left\| {{x_k} - 
			{{\tilde x}_s}} \right\|^2} + 5B_G^2L_F^2\mathbb{E}{\left\| {{{\hat 
					G}_k} - G({x_k})} \right\|^2} + 5B_G^2L_F^2\mathbb{E}{\left\| {G({{\tilde 
					x}_s}) - {G_{{{\cal 
							D}_1}}}({{\tilde x}_s})} \right\|^2}\\& + 5B_G^2L_F^2\mathbb{E}{\left\| 
		{G({{\tilde x}_s}) - 
			{G_{{{\cal 
							D}_1}}}({{\tilde x}_s})} \right\|^2} + 5\frac{{\mathbb{I}(D^2 < 
			n^2)}}{{D^2}}{H_2}\\
	\mathop  \le \limits^{\scriptsize \textcircled{\tiny{3}}}& 5B_G^4L_F^2\left( 
	{\frac{L_f^2}{B_G^4L_F^2} + 
		4\frac{{\mathbb{I}\left( {A < n} \right)}}{A} + 4\frac{{\mathbb{I}\left( {D < n} 
				\right)}}{D}} \right)\mathbb{E}{\left\| {{x_k} - {{\tilde x}_s}} \right\|^2} + 
	20B_G^2L_F^2\frac{{\mathbb{I}(D < 
			n)}}{D}{H_1} + 5\frac{{\mathbb{I}(D^2 < n^2)}}{{D^2}}{H_2},
	\end{align*}	
	where ${\small\textcircled{\scriptsize{1}}}$ follows from $||a_1+a_2+a_3+a_4+a_5||^2\le 
	5a_1^2+5a_2^2+5a_3^2+5a_4^2+5a_5^2$;
	${\small\textcircled{\scriptsize{2}}}$ is based on $
	\mathbb{E}[ \| X - \mathbb{E}[ X ] \|^2 ]  =  \mathbb{E}[ X^2 - \| \mathbb{E}[ 
	X ] \|^2 ] \le \mathbb{E}[ X^2 ]$, the smoothness of $F_i$ in Assumption 
	\ref{VRNonCS-SCSG:Assumption:GF}, the bounded Jacobian of $G(x)$ and the 
	smoothness of $F$ in Assumption \ref{VRNonCS-SCSG:Assumption:G} and 
	\ref{VRNonCS-SCSG:Assumption:F}, and the upper 
	bound of variance  in Assumption 
	\ref{VRNonCS-SCSG:Assumption:Middle-Bound} and Lemma 
	\ref{VRNonCS-SCSG:Appendix:Lemma:Inequation-indepent-double}.
	${\small\textcircled{\scriptsize{3}}}$ is based on Lemma 
	\ref{VRNonCS-SCSG:Lemma:Bound-estimate-G-2} and Assumption 
	\ref{VRNonCS-SCSG:Assumption:Middle-Bound}.
\end{proof}
\section{Proof of SC-SCSG method for Convex composition problem}
\begin{lemma}
	For $f(x)$ is $\mu$-strongly convex, by setting $\eta  = O\left( {\frac{\mu }{{L_f^2}}} 
	\right), k = O\left( {\frac{1}{\eta }} 
	\right) = O\left( {\frac{{L_f^2}}{\mu }} \right)$, we have the geometric convergence in 
	expectation: 
	\begin{align*}
	E{\left\| {{{\tilde x}_{s + 1}} - {x^*}} \right\|^2} \le {\rho ^s}E{\left\| {{{\tilde x}_0} 
			- {x^*}} \right\|^2}
	\end{align*}
	where $\rho  = \frac{1}{{2\left( {\mu  - 2L_f^2\eta } \right)\eta K}} + \frac{{L_f^2\eta 
	}}{{\left( {\mu  - 2L_f^2\eta } \right)}} < 1$. The gradient complexity is 
	\begin{align}
	O\left( {\left( {n + K} \right)\log \left( {1/\varepsilon } \right)} \right) = O\left( 
	{\left( {n + \frac{{L_f^2}}{\mu }} \right)\log \left( {1/\varepsilon } \right)} \right)
	\end{align}
\end{lemma}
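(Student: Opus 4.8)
The plan is to run the classical SVRG contraction argument of \cite{johnson2013accelerating}, but with the composition-specific variance control supplied by Lemma \ref{VRNonCS-SCSG:Lemma:Bound-estimate-G-4}, so that the effective smoothness constant is the combined Lipschitz constant $L_f$ of Assumption \ref{VRNonCS-SCSG:Assumption:GF}. I would first fix an epoch, write $x_0=\tilde x_s$, and expand one inner update $x_{k+1}=x_k-\eta\nabla\tilde f_k$ into the distance recursion
\begin{equation*}
\mathbb{E}\|x_{k+1}-x^*\|^2 = \|x_k-x^*\|^2 - 2\eta\langle \mathbb{E}[\nabla\tilde f_k], x_k-x^*\rangle + \eta^2\mathbb{E}\|\nabla\tilde f_k\|^2,
\end{equation*}
where the expectation is conditional on $x_k$ and the snapshot.

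Second, I would control the inner-product and second-moment terms. In the full-snapshot regime relevant to this lemma the estimator $\nabla\tilde f_k$ of (\ref{VRNonCS-SCSG:Definition:Estimate-f}) is unbiased up to the controlled bias of Lemma \ref{VRNonCS-SCSG:Lemma:Bound-estimate-G-3}, so $\langle\mathbb{E}[\nabla\tilde f_k],x_k-x^*\rangle=\langle\nabla f(x_k),x_k-x^*\rangle\ge\mu\|x_k-x^*\|^2$ by Assumption \ref{VRNonCS-SCSG:Assumption:f-strong}. For the second moment I would invoke Lemma \ref{VRNonCS-SCSG:Lemma:Bound-estimate-G-4}, whose $\|x_k-\tilde x_s\|^2$-coefficient collapses to a multiple of $L_f^2$ once the snapshot is computed fully (the $H_1,H_2$ terms then vanish), giving $\mathbb{E}\|\nabla\tilde f_k\|^2\le 2\|\nabla f(x_k)\|^2 + cL_f^2\|x_k-\tilde x_s\|^2$ for an absolute constant $c$, after which $\|\nabla f(x_k)\|^2\le L_f^2\|x_k-x^*\|^2$ (smoothness, $\nabla f(x^*)=0$) and $\|x_k-\tilde x_s\|^2\le 2\|x_k-x^*\|^2+2\|\tilde x_s-x^*\|^2$ reduce everything to the two quantities $\|x_k-x^*\|^2$ and $\|\tilde x_s-x^*\|^2$.

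Third, summing the resulting one-step inequality over $k=0,\dots,K-1$ telescopes the distance differences and, after dividing by $K$, leaves a per-epoch contraction $\mathbb{E}\|\tilde x_{s+1}-x^*\|^2\le\rho\,\mathbb{E}\|\tilde x_s-x^*\|^2$: the term $\tfrac{1}{2(\mu-2L_f^2\eta)\eta K}$ is exactly the $1/K$ contribution of the telescoped $\|x_0-x^*\|^2=\|\tilde x_s-x^*\|^2$, while $\tfrac{L_f^2\eta}{\mu-2L_f^2\eta}$ collects the snapshot-variance term. This is precisely the recursion of Theorem \ref{VRNonCS-SCSG:Lemma:NewSequence} with its extra-variance quantities $V_1$ and $\rho_3$ set to zero. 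Iterating over $s$ then gives $\mathbb{E}\|\tilde x_S-x^*\|^2\le\rho^S\mathbb{E}\|\tilde x_0-x^*\|^2$. For the complexity I would verify $\rho<1$: choosing $\eta=O(\mu/L_f^2)$ keeps $\mu-2L_f^2\eta$ bounded away from $0$ and the second term below $\tfrac12$, and $K=O(1/\eta)=O(L_f^2/\mu)$ makes the first term below $\tfrac12$, so $\rho$ is an absolute constant in $(0,1)$; thus $S=O(\log(1/\varepsilon))$ epochs suffice, and with $O(n)$ queries per snapshot plus $O(K)$ for the inner loop the total is $O((n+K)\log(1/\varepsilon))=O((n+L_f^2/\mu)\log(1/\varepsilon))$.

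The hard part will be the variance step: showing that both the bias and the second moment of $\nabla\tilde f_k$ are governed by $L_f^2\|x_k-\tilde x_s\|^2$ despite the two-level sampling and the nonlinearity of $F$ in $\hat G_k$. This is exactly the role of Assumption \ref{VRNonCS-SCSG:Assumption:GF} together with Lemmas \ref{VRNonCS-SCSG:Lemma:Bound-estimate-G-3}--\ref{VRNonCS-SCSG:Lemma:Bound-estimate-G-4}, so the main effort reduces to bookkeeping the constants and confirming that the step-size and inner-length conditions simultaneously keep $\rho<1$ and preserve the $O(n+K)$ per-epoch cost.
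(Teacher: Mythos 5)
Your proposal is correct and follows essentially the same route as the paper's own proof: the classical SVRG distance-contraction argument (expand $\mathbb{E}\|x_{k+1}-x^*\|^2$, use strong convexity on the inner product, bound the second moment by $2\|\nabla f(x_k)\|^2 + O(L_f^2)\|x_k-\tilde x_s\|^2$ in the full-snapshot regime where the $H_1,H_2$ terms vanish, apply $\|\nabla f(x_k)\|^2\le L_f^2\|x_k-x^*\|^2$ and $\|x_k-\tilde x_s\|^2\le 2\|x_k-x^*\|^2+2\|\tilde x_s-x^*\|^2$, sum over the epoch, and choose $\eta=O(\mu/L_f^2)$, $K=O(L_f^2/\mu)$ to force $\rho<1$). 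The only cosmetic difference is that you route the variance step through Lemmas \ref{VRNonCS-SCSG:Lemma:Bound-estimate-G-3}--\ref{VRNonCS-SCSG:Lemma:Bound-estimate-G-4} specialized to $A=D=n$, whereas the paper asserts the simplified bound $\mathbb{E}\|\nabla\tilde f_k-\nabla f(x_k)\|^2\le L_f^2\mathbb{E}\|x_k-\tilde x_s\|^2$ directly.
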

\begin{proof}
	\begin{align*}
	&{E_{i.j}}{\left\| {{x_{k + 1}} - {x^*}} \right\|^2}\\
	=& E{\left\| {{x_k} - {x^*}} \right\|^2} - 2\eta E\langle \nabla {{\tilde f}_k},{x_k} - 
	{x^*}\rangle  + {\eta ^2}E{\left\| {\nabla {{\hat f}_k}} \right\|^2}\\
	=& E{\left\| {{x_k} - {x^*}} \right\|^2} - 2\eta E\langle f\left( {{x_k}} \right),{x_k} - 
	{x^*}\rangle  + {\eta ^2}E{\left\| {\nabla {{\hat f}_k}} \right\|^2}\\
	\le&  E{\left\| {{x_k} - {x^*}} \right\|^2} - 2\eta \mu E{\left\| {{x_k} - {x^*}} 
		\right\|^2} + 2{\eta ^2}E{\left\| {\nabla f({x_k})} \right\|^2} + 2{\eta ^2}E{\left\| 
		{\nabla {{\tilde f}_k} - \nabla f({x_k})} \right\|^2}\\
	=& E{\left\| {{x_k} - {x^*}} \right\|^2} - 2\eta \mu E{\left\| {{x_k} - {x^*}} \right\|^2} 
	+ {\eta ^2}\left( {2E{{\left\| {\nabla f({x_k}) - \nabla f({x^*})} \right\|}^2} + 
		2L_f^2E{{\left\| {{x_k} - {{\tilde x}_s}} \right\|}^2}} \right)\\
	\le& E{\left\| {{x_k} - {x^*}} \right\|^2} - 2\eta \mu E{\left\| {{x_k} - {x^*}} 
		\right\|^2} + {\eta ^2}\left( {2L_f^2E{{\left\| {{x_k} - {x^*}} \right\|}^2} + 
		2L_f^2E{{\left\| {{x_k} - {x^*}} \right\|}^2} + 2L_f^2E{{\left\| {{{\tilde x}_s} - {x^*}} 
				\right\|}^2}} \right)\\
	=& E{\left\| {{x_k} - {x^*}} \right\|^2} - 2\left( {\mu  - 2L_f^2\eta } \right)\eta 
	E{\left\| {{x_k} - {x^*}} \right\|^2} + 2L_f^2{\eta ^2}E{\left\| {{{\tilde x}_s} - {x^*}} 
		\right\|^2}
	\end{align*}
	Summing up from $k=0$ to $k=K-1$, we have
	\begin{align*}
	E{\left\| {{x_K} - {x^*}} \right\|^2} \le E{\left\| {{x_0} - {x^*}} \right\|^2} - 2\left( 
	{\mu  - 2L_{}^2\eta } \right)\eta KE{\left\| {{{\tilde x}_{s + 1}} - {x^*}} \right\|^2} + 
	2L_f^2{\eta ^2}KE{\left\| {{{\tilde x}_s} - {x^*}} \right\|^2}
	\end{align*}
	For $x_0=\tilde{x}_s$, we have
	\begin{align*}
	E{\left\| {{{\tilde x}_{s + 1}} - {x^*}} \right\|^2} \le& \frac{{1 + 2L_f^2{\eta 
				^2}K}}{{2\left( {\mu  - 2L_f^2\eta } \right)\eta K}}E{\left\| {{{\tilde x}_s} - 
				{x^*}} 
		\right\|^2} - \frac{1}{{2\left( {\mu  - 2L_f^2\eta } \right)\eta K}}E{\left\| {{x_K} - 
		{x^*}} 
		\right\|^2}\\ 
	\le& \frac{{1 + 2L_f^2{\eta ^2}K}}{{2\left( {\mu  - 2L_f^2\eta } 
			\right)\eta K}}E{\left\| {{{\tilde x}_s} - {x^*}} \right\|^2}\\ 
	=& \left( 
	{\frac{1}{{2\left( 
				{\mu  - 2L_f^2\eta } \right)\eta K}} + \frac{{L_f^2\eta }}{{\left( {\mu  - 
					2L_f^2\eta } 
				\right)}}} \right)E{\left\| {{{\tilde x}_s} - {x^*}} \right\|^2}
	\end{align*}
	By setting $\eta  = O\left( {\frac{\mu }{{L_f^2}}} \right),K = O\left( {\frac{1}{\eta }} 
	\right) = O\left( {\frac{{L_f^2}}{\mu }} \right)$, we have the geometric convergence in 
	expectation:
	\begin{align}
	E{\left\| {{{\tilde x}_{s + 1}} - {x^*}} \right\|^2} \le {\rho ^s}E{\left\| {{{\tilde x}_0} 
			- {x^*}} \right\|^2}
	\end{align}
	where $\rho  = \frac{1}{{2\left( {\mu  - 2L_f^2\eta } \right)\eta K}} + \frac{{L_f^2\eta 
	}}{{\left( {\mu  - 2L_f^2\eta } \right)}} < 1$
\end{proof}

\textbf{Proof of Theorem \ref{VRNonCS-SCSG:Lemma:NewSequence}}
\begin{proof}
	By the update of $x_k$ in Algorithm \ref{VRNonCS-SCSG:AlgorithmI}, we have
	\begin{align*}
	&{\mathbb{E}_{i.j}}{\left\| {{x_{k + 1}} - {x^*}} \right\|^2}\\
	=& \mathbb{E}{\left\| {{x_k} - {x^*}} \right\|^2} - 2\eta \mathbb{E}\langle \nabla {{\tilde 
			f}_k},{x_k} - 
	{x^*}\rangle  + {\eta ^2}\mathbb{E}{\left\| {\nabla {{\hat f}_k}} \right\|^2}\\
	=& \mathbb{E}{\left\| {{x_k} - {x^*}} \right\|^2} - 2\eta \mathbb{E}\langle \nabla f({x_k}) + 
	{\mathbb{E}_{i,j}}\left[ 
	{\nabla {{\tilde f}_k}} \right] - \nabla f({x_k}),{x_k} - {x^*}\rangle  + {\eta 
		^2}\mathbb{E}{\left\| {\nabla {{\hat f}_k}} \right\|^2}\\
	=& \mathbb{E}{\left\| {{x_k} - {x^*}} \right\|^2} - 2\eta \mathbb{E}\langle \nabla 
	f({x_k}),{x_k} - {x^*}\rangle  
	- 2\eta \mathbb{E}\langle {\mathbb{E}_{i,j}}\left[ {\nabla {{\tilde f}_k}} \right] - \nabla 
	f({x_k}),{x_k} - 
	{x^*}\rangle  + {\eta ^2}\mathbb{E}{\left\| {\nabla {{\hat f}_k} + \nabla f({x_k}) - \nabla 
			f({x_k})} 
		\right\|^2}\\
	\mathop  \le \limits^{\scriptsize \textcircled{\tiny{1}}}& \mathbb{E}{\left\| {{x_k} - {x^*}} 
		\right\|^2} - 2\eta \mu \mathbb{E}{\left\| {{x_k} - 
			{x^*}} \right\|^2} + 
	\eta \frac{1}{h}{\mathbb{E}_{\mathcal{A,D}}}{\left\| {{\mathbb{E}_{i,j}}\left[ {\nabla {{\tilde 
						f}_k}} 
			\right] - \nabla 
			f({x_k})} \right\|^2} + h\eta \mathbb{E}{\left\| {{x_k} - {x^*}} \right\|^2}\\
	&+ 2{\eta ^2}\left( {\mathbb{E}{{\left\| {\nabla f({x_k})} \right\|}^2} + \mathbb{E}{{\left\| 
				{\nabla {{\tilde 
							f}_k} - \nabla f({x_k})} \right\|}^2}} \right)\\
	=& \mathbb{E}{\left\| {{x_k} - {x^*}} \right\|^2} - \left( {2\eta \mu  - h\eta } 
	\right)\mathbb{E}{\left\| {{x_k} 
			- {x^*}} \right\|^2} + \eta \frac{1}{h}{\mathbb{E}_{\mathcal{A,D}}}{\left\| 
		{{\mathbb{E}_{i,j}}\left[ {\nabla {{\tilde 
						f}_k}} \right] - \nabla f({x_k})} \right\|^2}\\
	&+ 2{\eta ^2}\left( {\mathbb{E}{{\left\| {\nabla f({x_k}) - \nabla f({x^*})} \right\|}^2} + 
		\mathbb{E}{{\left\| 
				{\nabla {{\tilde f}_k} - \nabla f({x_k})} \right\|}^2}} \right)\\
	\mathop  \le \limits^{\scriptsize \textcircled{\tiny{2}}}& \mathbb{E}{\left\| {{x_k} - {x^*}} 
		\right\|^2} - \left( {2\eta \mu  - h\eta } 
	\right)\mathbb{E}{\left\| 
		{{x_k} - {x^*}} \right\|^2} + \eta \frac{1}{h}\left( {4V{{\left\| {{x_k} - {{\tilde x}_s}} 
				\right\|}^2} + {V_2}} \right)\\
	&+ 2{\eta ^2}\left( {L_f^2\mathbb{E}{{\left\| {{x_k} - {x^*}} \right\|}^2} + 5\left( {L_f^2 + 
			V} 
		\right){{\left\| {{x_k} - {{\tilde x}_s}} \right\|}^2} + {V_1}} \right)\\
	\mathop  \le \limits^{\scriptsize \textcircled{\tiny{3}}}& \mathbb{E}{\left\| {{x_k} - {x^*}} 
		\right\|^2} - \left( {2\mu  - h - 4V\frac{1}{h} - 
		\left( 
		{12L_f^2 + 10V} \right)\eta } \right)\eta \mathbb{E}{\left\| {{x_k} - {x^*}} \right\|^2}\\
	&+ 2\left( {2V\frac{1}{h} + 5\left( {L_f^2 + V} \right)\eta } \right)\eta \mathbb{E}{\left\| 
		{{{\tilde x}_s} - {x^*}} \right\|^2} + \frac{1}{h}\eta {V_2} + 2{\eta ^2}{V_1},
	\end{align*}	
	where
	\begin{align*}
	V =& B_G^4L_F^2\left( {4\frac{{\mathbb{I}\left( {A < n} \right)}}{A} + 4\frac{{\mathbb{I}\left( 
				{D < n} 
				\right)}}{D}} \right),\\
	{V_1} =& 20B_G^2L_F^2\frac{{\mathbb{I}(D < n)}}{D}{H_1} + 5\frac{{\mathbb{I}({D^2} < 
			{n^2})}}{{{D^2}}}{H_2},\\
	{V_2} =& 16B_G^2L_F^2\frac{{\mathbb{I}(D < n)}}{D}{H_1} + 4\frac{{\mathbb{I}({D^2} < 
			{n^2})}}{{{D^2}}}{H_2} = 
	\frac{4}{5}{V_1},
	\end{align*}
	${\small\textcircled{\scriptsize{1}}}$ is based on $||a_1+a_2||^2\le 2a_1^2+2a_2^2$ and 
	$\left\langle {a_1,a_2} \right\rangle \le h||a_1||^2+\frac{1}{h}||a_2||^2, h>0$;
	${\small\textcircled{\scriptsize{2}}}$ is based on strongly-convex of $f$ in Assumption 
	\ref{VRNonCS-SCSG:Assumption:f-strong};
	${\small\textcircled{\scriptsize{3}}}$ following from Lemma 
	\ref{VRNonCS-SCSG:Lemma:Bound-estimate-G-3} and Lemma 
	\ref{VRNonCS-SCSG:Lemma:Bound-estimate-G-4}.
	
	Summing up from $k=0$ to $k=K-1$, we have
	\begin{align*}
	\mathbb{E}{\left\| {{x_K} - {x^*}} \right\|^2} \le \mathbb{E}{\left\| {{x_0} - {x^*}} 
		\right\|^2} - 
	{\rho 
		_1}K\mathbb{E}{\left\| {{{\tilde x}_{s + 1}} - {x^*}} \right\|^2} + {\rho 
		_2}K\mathbb{E}{\left\| 
		{{{\tilde 
					x}_s} - {x^*}} \right\|^2} + {\rho _3}K,
	\end{align*}
	where 
	\begin{align*}
	{\rho _1} =& \left( {2\mu  - h - 4V\frac{1}{h} - \left( {12L_f^2 + 10V} \right)\eta } 
	\right)\eta=\left( {\mu  - 2L_f^2\eta } \right)\eta  - {\rho _2}, \\
	{\rho _2} =& 2\left( {2V\frac{1}{h} + 5\left( {L_f^2 + V} \right)\eta } \right)\eta, \\
	{\rho _3} =& \frac{1}{h}\eta {V_2} + 2{\eta ^2}{V_1}.
	\end{align*}
	For $x_0=\tilde{x}_s$, by arrange, we have
	\begin{align*}
	{\rho _1}\mathbb{E}{\left\| {{{\tilde x}_{s + 1}} - {x^*}} \right\|^2} \le& 
	\frac{1}{K}\mathbb{E}{\left\| 
		{{x_0} - {x^*}} \right\|^2} + {\rho _2}E{\left\| {{{\tilde x}_s} - {x^*}} \right\|^2} + 
	{\rho _3} - \frac{1}{K}\mathbb{E}{\left\| {{x_K} - {x^*}} \right\|^2}\\
	\le& \left( {\frac{1}{K} + {\rho _2}} \right)\mathbb{E}{\left\| {{{\tilde x}_s} - {x^*}} 
		\right\|^2} 
	+ {\rho _3}.
	\end{align*}
\end{proof}
\textbf{Proof of Corollary \ref{VRNonCS-SCSG:Lemma:paraeters setting}}
\begin{proof}
	In order to keep the proposed algorithm converge, we consider the parameters' setting, we first 
	ensure that $\rho_1>0$ in (\ref{VRNonCS-SCSG:Lemma:NewSequence-r1}), and then define
	\begin{align}\label{VRNonCS-SCSG:Lemma:paraeters setting-rho}
	\rho=(\frac{1}{K}+\rho_2)/\rho_1,
	\end{align} 
	that require $\rho<1$, where $\rho_2$ defined in (\ref{VRNonCS-SCSG:Lemma:NewSequence-r2}). 
	Thus, the convergence sequence  is 
	\begin{align*}
	\mathbb{E}{\left\| {{{\tilde x}_S} - {x^*}} \right\|^2} \le {\rho ^S}\mathbb{E}{\left\| 
		{{{\tilde x}_0} - 
			{x^*}} \right\|^2} + \frac{{{\rho _3}}}{{{\rho _1}}}\sum\limits_{s = 0}^S {{\rho ^s}}  
	\le 
	{\rho ^S}\mathbb{E}{\left\| {{{\tilde x}_0} - {x^*}} \right\|^2} + \frac{{{\rho 
				_3}}}{{{\rho 
				_1}}}\frac{1}{{1 - \rho }}.
	\end{align*}
	We ensure  $\frac{{{\rho _3}}}{{{\rho _1}}}\frac{1}{{1 - \rho }} \le \frac{1}{2}\varepsilon $, 
	where $\rho_3$ defined in (\ref{VRNonCS-SCSG:Lemma:NewSequence-r3}), 
	that we can derive the size of the $D$. In the following we analyze the parameters' setting 
	such that satisfying above requirement.
	\begin{enumerate}
		\item In order to ensure ${\rho _1} > 0$ in (\ref{VRNonCS-SCSG:Lemma:NewSequence-r1}), we 
		consider the parameter $h$, $\eta$ and $A$,
		\begin{enumerate}
			\item $h = \mu$, consider $\rho_1$ in (\ref{VRNonCS-SCSG:Lemma:NewSequence-r1}), we 
			should require that 
			$h\le \mu$, however, V in (\ref{VRNonCS-SCSG:Lemma:NewSequence-V}) has the relationship 
			with $A$ and $D$. In order to keep $A$ small enough, we set the upper bound of $h$. 
			Thus, we set $h=\mu$.
			\item $ A = \min \left\{ 
			{n,128B_G^4L_F^2\frac{1}{{{\mu ^2}}}} \right\}$, based on the setting of h, we require 
			that $V/h<\frac{\mu}{16}$. Thus, we have
			\begin{center}
				$V = B_G^4L_F^2\left( {4\frac{{I\left( {A < n} \right)}}{A} + 4\frac{{I\left( {D 
								< n} \right)}}{D}} \right) \le 8B_G^4L_F^2\frac{{I\left( {A < n} 
						\right)}}{A} \le 
				\frac{1}{16}{\mu ^2}$.
			\end{center}			
			For V defined in (\ref{VRNonCS-SCSG:Lemma:NewSequence-V}), if $A<n$, we have
			\begin{center}
				$A \ge 128B_G^4L_F^2\frac{1}{{{\mu ^2}}}$,
			\end{center}
			otherwise, $A=n$ satisfy the requirement. Thus, we have $ A = \min \left\{ 
			{n,128B_G^4L_F^2\frac{1}{{{\mu ^2}}}} \right\}$.
			\item $\eta  \le \frac{3\mu }{{53L_f^2}}$, back to the target of  $\rho_1>0$, we 
			require that $\eta  \le \frac{3\mu }{{53L_f^2}} 
			\le 
			\frac{{\frac{3}{4}\mu }}{{12L_f^2 + \frac{{10}}{8}L_f^2}} \le \frac{{\frac{3}{4}\mu 
			}}{{12L_f^2 + \frac{{10}}{8}{\mu ^2}}} = \frac{{\mu  - 4\frac{1}{\mu }V}}{{12L_f^2 + 
					10V}}= \frac{{2\mu  - h - 4\frac{1}{h}V}}{{2L_f^2 + 
					10\left( {L_f^2 + V} \right)}} $, note that $\mu\le L_f$ by the definition in 
			preliminaries. 
		\end{enumerate}
		\item In order to ensure $\rho<1$ in (\ref{VRNonCS-SCSG:Lemma:paraeters setting-rho}), we 
		first
		consider $\rho_1$ and $\rho_2$ in 
		(\ref{VRNonCS-SCSG:Lemma:NewSequence-r1}) and (\ref{VRNonCS-SCSG:Lemma:NewSequence-r2}).
		By the setting of $h=\mu$ and $V<\mu^2/16$, we have,
		\begin{align}
		\label{VRNonCS-SCSG:Lemma:paraeters setting-rho-1-bound}
		{\rho _1} \ge& \left( {\mu  - 2L_f^2\eta  - \left( {\frac{1}{4}\mu  + 10\left( {L_f^2 + 
					\frac{1}{{16}}{\mu ^2}} \right)\eta } \right)} \right)\eta  \ge \left( 
		{\frac{3}{4}\mu  - 
			\frac{{101}}{8}L_f^2\eta } \right)\eta, \\
		\label{VRNonCS-SCSG:Lemma:paraeters setting-rho-2-bound}
		{\rho _2} \le& 4\frac{1}{\mu }\eta \frac{1}{{16}}{\mu ^2} + 10\left( {L_f^2 + 
			\frac{1}{{16}}{\mu ^2}} \right){\eta ^2} \le \left( {\frac{1}{4}\mu  + 10\left( {L_f^2 
				+ 
				\frac{1}{{16}}{\mu ^2}} \right)\eta } \right)\eta  \ge \left( {\frac{1}{4}\mu  + 
			\frac{{85}}{8}L_f^2\eta } \right)\eta. 
		\end{align}
		We require that $\rho=\frac{1}{{K{\rho _1}}} + \frac{{{\rho _2}}}{{{\rho _1}}} < 1$, and 
		analyze 
		the two term separately,
		\begin{enumerate}
			\item In order to $\frac{{{\rho _2}}}{{{\rho _1}}} < \frac{1}{2}$, that is
			\begin{align*}
			\frac{{{\rho _2}}}{{{\rho _1}}} < \frac{{\left( {\frac{1}{4}\mu  + 
						\frac{{85}}{8}L_f^2\eta } \right)\eta }}{{\left( {\frac{3}{4}\mu  - 
						\frac{{101}}{8}L_f^2\eta } \right)\eta }} < \frac{1}{2}.
			\end{align*}
			We get $\eta  \le \frac{\mu }{{135L_f^2}}$.
			\item In order to $\frac{1}{{K{\rho _1}}} < \frac{1}{2}$, that is 
			\begin{align*}
			\frac{1}{{K{\rho _1}}} < \frac{1}{{2K{\rho _2}}} \le& \frac{1}{{2K\left( 
					{\frac{1}{4}\mu  + 10\left( {L_f^2 + \frac{1}{{16}}{\mu ^2}} \right)\eta } 
					\right)\eta }}\\
			\le& \frac{1}{{2K\left( {\frac{1}{4}\mu  + \frac{{85}}{8}L_f^2\eta } \right)\eta }} 
			\le \frac{1}{{2K\left( {\frac{1}{4}\mu \eta } \right)}} < \frac{1}{2}.
			\end{align*}
			Thus, we have $K \ge 540\frac{{L_f^2}}{{{\mu ^2}}}$.
		\end{enumerate}
		\item Consider the term ${\rho ^S}E{\left\| {{{\tilde x}_0} - {x^*}} \right\|^2} + 
		\frac{{{\rho _3}}}{{{\rho _1}}}\frac{1}{{1 - \rho }}$, we analyze them separately,
		\begin{enumerate}
			\item 	In order to ensure $\frac{{{\rho _3}}}{{{\rho _1}}}\frac{1}{{1 - \rho }} \le 
			\frac{1}{2}\varepsilon $, that is 
			\begin{align*}
			\frac{{{\rho _3}}}{{{\rho _1}}}\frac{1}{{1 - \left( {\frac{1}{{K{\rho _1}}} + 
						\frac{{{\rho _2}}}{{{\rho _1}}}} \right)}} = \frac{{{\rho _3}}}{{{\rho _1} 
					- 
					\frac{1}{K} - {\rho _2}}} \le \frac{{{\rho _3}}}{{{\rho _1} - \frac{1}{K} - 
					\frac{1}{2}{\rho _1}}} \le \frac{{{\rho _3}}}{{\frac{1}{2}{\rho _1} - 
					\frac{1}{K}}} \le 
			\frac{{2{\rho _3}}}{{{\rho _1}}} \le \frac{1}{2}\varepsilon .
			\end{align*}
			Based on the bound of $\rho_1$ in (\ref{VRNonCS-SCSG:Lemma:paraeters 
				setting-rho-1-bound}),  the definition of $V_1$ in 
			(\ref{VRNonCS-SCSG:Lemma:NewSequence-V1}) and the step size $\eta$ mentioned above, we 
			have		
			\begin{enumerate}
				\item For $V$
				\begin{align*}
				2\frac{{\frac{1}{\mu }\eta {V_2} + 2{\eta ^2}{V_1}}}{{{\rho _1}}} = 
				2\frac{{\frac{1}{\mu }{V_2} + 2\eta {V_1}}}{{\frac{3}{4}\mu  - 
						\frac{{101}}{8}L_f^2\eta 
				}} = \frac{{\frac{4}{5}\frac{1}{\mu }{V_1} + 2\eta {V_1}}}{{\frac{3}{4}\mu  - 
						\frac{{101}}{8}L_f^2\eta }} = \frac{{\left( {\frac{4}{{5\mu }} + 2\eta } 
						\right){V_1}}}{{\frac{3}{4}\mu  - \frac{{101}}{8}L_f^2\eta }} \le 
				\varepsilon, 
				\end{align*}
				thus, we have
				\begin{align*}
				{V_1} \le \frac{4}{5}\varepsilon {\mu ^2} \le \frac{{\left( {\frac{3}{4} - 
							\frac{{101}}{8}\frac{1}{{135}}} \right)}}{{\frac{4}{5} + 
						\frac{2}{{135}}}}\varepsilon {\mu ^2} \le \frac{{\left( {\frac{3}{4} - 
							\frac{{101}}{8}\frac{1}{{135}}} \right)\mu }}{{\frac{4}{{5\mu }} + 
						2\frac{\mu 
						}{{135\mu _{}^2}}}}\varepsilon  \le \frac{{\frac{3}{4}\mu  - 
						\frac{{101}}{8}L_f^2\frac{\mu }{{135L_f^2}}}}{{\frac{4}{{5\mu }} + 
						2\frac{\mu 
						}{{135L_f^2}}}}\varepsilon  \le \frac{{\frac{3}{4}\mu  - 
						\frac{{101}}{8}L_f^2\eta 
				}}{{\left( {\frac{4}{{5\mu }} + 2\eta } \right)}}\varepsilon 
				\end{align*}
				\item If $D<n$, we can obtain  $D \ge \frac{5}{{4\varepsilon{\mu ^2}}}\left( 
				{20B_G^4L_F^2{H_1} + 5{H_2}} \right)$, otherwise $D=0$, the above inequality is 
				correct. Thus, we obtain $D = \min \left\{ {n,\left( {16B_G^4L_F^2{H_1} + 4{H_2}} 
					\right)\frac{5}{{4\varepsilon {\mu ^2}}}} \right\}$.
			\end{enumerate}
			\item In order to ensure ${\rho ^S}E{\left\| {{{\tilde x}_0} - {x^*}} \right\|^2} \le 
			\frac{1}{2}\varepsilon$, we need the number of the outer iterations 
			\begin{align*}
			S \ge \frac{1}{{\log \left( {1/\rho } \right)}}\log \frac{{2E{{\left\| {{{\tilde x}_0} 
								- 
								{x^*}} \right\|}^2}}}{\varepsilon }.
			\end{align*}
		\end{enumerate}
	\end{enumerate}
	All in all, we consider the query complexity based on above parameters' setting. For each outer 
	iteration, there will be $\left( {D + KA} \right)$ queries. Thus, the query complexity is 
	\begin{align*}
	{\left( {D + KA} \right)S} =
	\mathcal{O}\left( 
	{\left( {\min \left\{ {n,\frac{1}{{{\varepsilon\mu ^2}}}} \right\} + \frac{{L_f^2}}{{{\mu 
						^2}}}\min 
			\left\{ {n,\frac{1}{{{\mu ^2}}}} \right\}} \right)\log \left( {1/\varepsilon } \right)} 
	\right).
	\end{align*}
\end{proof}
\section{Proof of SC-SCSG method for Non-convex composition problem}
\textbf{Proof of Lemma \ref{VRNonCS-SCSG:Non:Lemma:NewSequence}}

\begin{proof}
	Consider the upper bound of $f(x_{k+1})$ and ${\left\| {{x_{k + 1}} - {{\tilde x}_s}} 
		\right\|^2}$,respectively,
	\begin{itemize}
		\item 	Base on the smoothness of $f$ in Assumption \ref{VRNonCS-SCSG:Assumption:GF} and 
		take expectation with respective to $i_k, j_k$, we 
		have
		\begin{align*}		
		&{\mathbb{E}_{i,j}}\left[ {f({x_{k + 1}})} \right]\\
		\le& \mathbb{E}\left[ {f({x_k})} \right] - \eta \mathbb{E}\langle \nabla f({x_k}),\nabla 
		{{\tilde 
				f}_k}\rangle  + \frac{{{L_f}}}{2}{\eta ^2}\mathbb{E}{\left\| {\nabla {{\tilde 
						f}_k}} \right\|^2}\\
		=&  \mathbb{E}\left[ {f({x_k})} \right] - \eta \mathbb{E}\langle \nabla f({x_k}),\nabla 
		{{\tilde f}_k} - 
		\nabla f({x_k}) + \nabla f({x_k})\rangle  + \frac{{{L_f}}}{2}{\eta ^2}\mathbb{E}{\left\| 
			{\nabla 
				{{\tilde f}_k}} \right\|^2}\\
		=& \mathbb{E}\left[ {f({x_k})} \right] - \eta \mathbb{E}\langle \nabla f({x_k}),\nabla 
		f({x_k})\rangle  - 
		\eta \langle \nabla f({x_k}),\mathbb{E}\left[ {\nabla {{\tilde f}_k}} \right] - \nabla 
		f({x_k})\rangle  + \frac{{{L_f}}}{2}{\eta ^2}\mathbb{E}{\left\| {\nabla {{\tilde f}_k} - 
				\nabla 
				f({x_k}) + \nabla f({x_k})} \right\|^2}\\
		\le& \mathbb{E}\left[ {f({x_k})} \right] - \eta \mathbb{E}{\left\| {\nabla f({x_k})} 
			\right\|^2} + 
		\frac{1}{2}\eta \mathbb{E}{\left\| {\nabla f({x_k})} \right\|^2} + \frac{1}{2}\eta 
		\mathbb{E}{\left\| 
			{{\mathbb{E}_{i,j}}\left[ {\nabla {{\tilde f}_k}} \right] - \nabla f({x_k})} 
			\right\|^2}\\& + 
		\frac{{{L_f}}}{2}{\eta ^2}\left( {2\mathbb{E}{{\left\| {\nabla f({x_k})} \right\|}^2} + 
			2\mathbb{E}{{\left\| 
					{\nabla {{\tilde f}_k} - \nabla f({x_k})} \right\|}^2}} \right)\\
		=& \mathbb{E}\left[ {f({x_k})} \right] - \frac{1}{2}\eta \mathbb{E}{\left\| {\nabla 
				f({x_k})} \right\|^2} + 
		\frac{1}{2}\eta \mathbb{E}{\left\| {{\mathbb{E}_{i,j}}\left[ {\nabla {{\tilde f}_k}} 
				\right] - \nabla 
				f({x_k})} \right\|^2} + {L_f}{\eta ^2}\left( {\mathbb{E}{{\left\| {\nabla f({x_k})} 
					\right\|}^2} + 
			\mathbb{E}{{\left\| {\nabla {{\tilde f}_k} - \nabla f({x_k})} \right\|}^2}} \right)\\
		=& \mathbb{E}\left[ {f({x_k})} \right] - \left( {\frac{1}{2}\eta  - {L_f}{\eta ^2}} 
		\right)\mathbb{E}{\left\| 
			{\nabla f({x_k})} \right\|^2} + \frac{1}{2}\eta \mathbb{E}{\left\| 
			{{\mathbb{E}_{i,j}}\left[ {\nabla {{\tilde 
							f}_k}} \right] - \nabla f({x_k})} \right\|^2} + {L_f}{\eta 
			^2}\mathbb{E}{\left\| {\nabla {{\tilde 
						f}_k} - \nabla f({x_k})} \right\|^2},
		\end{align*}
		where the last inequality is based on $||a_1+a_2||^2\le 2a_1^2+2a_2^2$.
		\item 	Base on the update  of $x_k$ in Algorithm \ref{VRNonCS-SCSG:AlgorithmI} and 	
		take expectation with respective to $i_k, j_k$, we have, 
		\begin{align*}
		&{\mathbb{E}_{i.j}}{\left\| {{x_{k + 1}} - {{\tilde x}_s}} \right\|^2}\\
		=& \mathbb{E}{\left\| {{x_k} - {{\tilde x}_s}} \right\|^2} - 2\eta \mathbb{E}\langle \nabla 
		{{\tilde 
				f}_k},{x_k} - {{\tilde x}_s}\rangle  + {\eta ^2}\mathbb{E}{\left\| {\nabla {{\tilde 
						f}_k}} 
			\right\|^2}\\
		=& \mathbb{E}{\left\| {{x_k} - {{\tilde x}_s}} \right\|^2} - 2\eta \mathbb{E}\langle \nabla 
		{{\tilde 
				f}_k} - \nabla f({x_k}) + \nabla f({x_k}),{x_k} - {{\tilde x}_s}\rangle ] + {\eta 
			^2}\mathbb{E}{\left\| {\nabla {{\tilde f}_k}} \right\|^2}\\
		=& \mathbb{E}{\left\| {{x_k} - {{\tilde x}_s}} \right\|^2} - 2\eta \mathbb{E}\langle \nabla 
		f({x_k}),{x_k} - {{\tilde x}_s}\rangle ] - 2\eta \langle \mathbb{E}\left[ {\nabla {{\tilde 
					f}_k}} \right] - \nabla f({x_k}),{x_k} - {{\tilde x}_s}\rangle ] + {\eta 
			^2}\mathbb{E}{\left\| 
			{\nabla {{\tilde f}_k} - \nabla f({x_k}) + \nabla f({x_k})} \right\|^2}\\
		\le& \mathbb{E}{\left\| {{x_k} - {{\tilde x}_s}} \right\|^2} + h\eta {\left\| {\nabla 
				f({x_k})} 
			\right\|^2} + h\eta {\left\| {\mathbb{E}\left[ {\nabla {{\tilde f}_k}} \right] - \nabla 
				f({x_k})} \right\|^2} + \frac{2}{h}\eta \mathbb{E}{\left\| {{x_k} - {{\tilde x}_s}} 
			\right\|^2} 
		\\&+ {\eta ^2}\left( {2\mathbb{E}{{\left\| {\nabla f({x_k})} \right\|}^2} + 
			2\mathbb{E}{{\left\| 
					{\nabla 
						{{\tilde f}_k} - \nabla f({x_k})} \right\|}^2}} \right)\\
		=& \left( {1 + \frac{2}{h}\eta } \right)\mathbb{E}{\left\| {{x_k} - {{\tilde x}_s}} 
			\right\|^2} 
		+ \left( {h\eta  + 2{\eta ^2}} \right)\mathbb{E}{\left\| {\nabla f({x_k})} \right\|^2} + 
		h\eta 
		\mathbb{E}{\left\| {\mathbb{E}\left[ {\nabla {{\tilde f}_k}} \right] - \nabla f({x_k})} 
			\right\|^2} 
		+ 
		2{\eta ^2}\mathbb{E}{\left\| {\nabla {{\tilde f}_k} - \nabla f({x_k})} \right\|^2},
		\end{align*}
		
		where the inequality is based on  $2\langle {a_1,b_2} \rangle  \le $$1/h\| a_1 \|^2$$ +$$ 
		h\| a_2 \|^2$, $\forall h>0$, and $||a_1+a_2||^2\le 2a_1^2+2a_2^2$.
	\end{itemize}
	Combine above equalities and Lemma \ref{VRNonCS-SCSG:Lemma:Bound-estimate-G-3}, 
	\ref{VRNonCS-SCSG:Lemma:Bound-estimate-G-4}, 	we form a Lyapunov function,
	\begin{align*}
	&\mathbb{E}[f({x_{k + 1}})] + {c_{k + 1}}\mathbb{E}{\left\| {{x_{k + 1}} - {{\tilde x}_s}} 
		\right\|^2}\\
	=& \mathbb{E}[f({x_k})] - \left( {\frac{1}{2}\eta  - {L_f}{\eta ^2}} \right){\left\| {\nabla 
			f({x_k})} \right\|^2} + \frac{1}{2}\eta {\left\| {\mathbb{E}\left[ {\nabla {{\tilde 
						f}_k}} 
			\right] - 
			\nabla f({x_k})} \right\|^2} + {L_f}{\eta ^2}{\left\| {\nabla {{\tilde f}_k} - \nabla 
			f({x_k})} \right\|^2}\\
	&+ {c_{k + 1}}\left( {\left( {1 + \frac{2}{h}\eta } \right)\mathbb{E}{{\left\| {{x_k} - 
					{{\tilde 
							x}_s}} \right\|}^2} + \left( {h\eta  + 2{\eta ^2}} \right){{\left\| 
				{\nabla 
					f({x_k})} 
				\right\|}^2} + h\eta {{\left\| {\mathbb{E}\left[ {\nabla {{\tilde f}_k}} \right] - 
					\nabla f({x_k})} 
				\right\|}^2} + 2{\eta ^2}{{\left\| {\nabla {{\tilde f}_k} - \nabla f({x_k})} 
				\right\|}^2}} 
	\right)\\
	=& \mathbb{E}[f({x_k})] + {c_{k + 1}}\left( {1 + \frac{2}{h}\eta } \right)\mathbb{E}{\left\| 
		{{x_k} 
			- 
			{{\tilde x}_s}} \right\|^2} - \left( {\left( {\frac{1}{2} - {c_{k + 1}}h} \right)\eta  
		- 
		\left( {{L_f} + 2{c_{k + 1}}} \right){\eta ^2}} \right){\left\| {\nabla f({x_k})} 
		\right\|^2}\\
	&+ \left( {{L_f}{\eta ^2} + 2{\eta ^2}{c_{k + 1}}} \right){\left\| {\nabla {{\tilde f}_k} - 
			\nabla f({x_k})} \right\|^2} + \left( {\frac{1}{2}\eta  + h\eta {c_{k + 1}}} 
	\right){\left\| {\mathbb{E}\left[ {\nabla {{\tilde f}_k}} \right] - \nabla f({x_k})} 
		\right\|^2}\\
	\le& \mathbb{E}[f({x_k})] + {c_k}\mathbb{E}{\left\| {{x_k} - {{\tilde x}_s}} \right\|^2} - 
	{u_k}{\left\| 
		{\nabla f({x_k})} \right\|^2} +J_k,
	\end{align*}
	where
	\begin{align*}
	{u_k} =& \left( {\left( {\frac{1}{2} - h{c_{k + 1}}} \right)\eta  - \left( {{L_f} + 2{c_{k 
					+ 1}}} \right){\eta ^2}} \right);\\
	{W_1} =& 20B_G^2L_F^2\frac{{\mathbb{I}(D < n)}}{D}{H_1} + 5\frac{{\mathbb{I}(D^2 < 
			n^2)}}{{D^2}}{H_2};\\
	{W_2} =& \frac{4}{5}W_1;\\
	J_k=& \left( {\frac{1}{2} + h{c_{k + 1}}} \right){W_2}\eta  + 
	\left( {{L_f} + 2{c_{k + 1}}} \right){W_1}{\eta ^2};\\
	W =& B_G^4L_F^2\left( {4\frac{{\mathbb{I}\left( {A < n} \right)}}{A} + 
		4\frac{{\mathbb{I}\left( {D < 
					n} 
				\right)}}{D}} \right);\\
	{c_k} =& {c_{k + 1}}\left( {1 + \left( {\frac{2}{h} + 4hW} \right)\eta  + 10\left( 
		{L_f^2 + W} \right){\eta ^2}} \right) + 2W\eta  + 5(L_f^2 + W){L_f}{\eta ^2}.
	\end{align*}
\end{proof}
Based on the above inequality with respect to the sequence $\mathbb{E}[f({x_k})] + 
{c_k}\mathbb{E}{\left\| {{x_k} - {{\tilde x}_s}} \right\|^2}$ and Algorithm 
\ref{VRNonCS-SCSG:AlgorithmI}, we can obtain the convergence form in which the parameters are not 
clear defined.

\textbf{Proof of Theorem \ref{VRNonCS-SCSG:Non:Lemma:Convergence form}}
\begin{proof} Based on the update for $c_k$ in (\ref{VRNonCS-SCSG:Non:Lemma:NewSequence-c_k}), we 
	can see that $c_k>c_{k+1}$. As $c_k$ is a decreasing sequence, we have $u_0<u_k$ and $J_k<J_0$
	Then, we get 
	\begin{align*}
	{u_0}\mathbb{E}[ {\| {\nabla f( {{x_k}} )} \|^2} ] \le E[ {f( {{x_k}} )} ] + {c_k}\mathbb{E}[ 
	{\| {{x_k} - \tilde x_s} \|^2} ] - ( {\mathbb{E}[ {f( {{x_{k + 1}}} )} ] + {c_{k + 
				1}}\mathbb{E}[ {\| {{x_{k + 1}} - \tilde x_s} \|^2} ]} )+J_0.
	\end{align*}
	Sum from $k=0$ to $k=K-1$, we can get
	\begin{align*}
	\frac{1}{K}\sum\limits_{k = 0}^{K-1} {u\mathbb{E}[ {\| {\nabla f( {{x_k}} )} \|^2} ]}  &\le 
	\frac{{\mathbb{E}[ {f( {{x_0}} )} ] - ( {\mathbb{E}[ {f( {{x_K}} )} ] + {c_K}\mathbb{E}[ {\| 
					{{x_K} - \tilde x_s} \|^2} ]} )}}{{K}}+J_0\\
	&\le \frac{{\mathbb{E}[ {f( {{x_0}} )} ] -\mathbb{E}[ {f( {{x_K}} )} ]}}{{K}}+J_0.
	\end{align*}
	Since $x_0=\tilde{x}_s$, let $\tilde{x}_{s+1}=x_K$, we obtain,
	\begin{align*}
	\frac{1}{K}\sum\limits_{k = 0}^{K-1} {u_0\mathbb{E}[ {\| {\nabla f( {{x_k}} )} \|^2} ]} \le 
	\frac{{\mathbb{E}[ {f( {{{\tilde x}_s}} )} ] - \mathbb{E}[ {f( {{{\tilde x}_{s + 1}}} )} 
			]}}{K}+J_0.
	\end{align*} 
	Summing the outer iteration from $s=0$ to $S-1$, we have
	\begin{align*}
	u_0\mathbb{E}[{\| {\nabla f({\hat x_k^s })} \|^2}] = \frac{1}{S}\sum\limits_{s = 0}^{S - 1} 
	{\frac{1}{K}\sum\limits_{k = 0}^{K - 1} {u_0\mathbb{E}[{{\| {\nabla f(x_k^s)} \|}^2}]} } +J_0 
	\le 
	\frac{{\mathbb{E}[f({{\tilde x}_0})] - \mathbb{E}[f({{\tilde x}_S})]}}{{KS}}+J_0 \le 
	\frac{{f({x_0}) - f({x^*})}}{{KS}}+J_0,
	\end{align*}
	where $x_k^s$ indicates the $s$-th outer iteration at $k$-th inner iteration, and $\hat 
	x_k^s$ is uniformly and randomly chosen from  $s=\{0,...,S-1\}$ and k=$\{0,..,K-1\}$.
\end{proof}

\textbf{Proof of Corollary \ref{VRNonCS-SCSG:Non:Lemma:paraeters setting}}

\begin{proof}
	In order to have $\mathbb{E}[\| \nabla f(\hat x_k^s ) \|^2]  \le \varepsilon$, that is
	\begin{center}
		$\mathbb{E}[{\| {\nabla f({\hat x_k^s })} \|^2}]   \le \frac{{L_f(f({x_0}) - 
				f({x^*}))}}{{u_0 SK}}+J_0/{u_0} \le \frac{ \varepsilon}{2}+\frac{ \varepsilon}{2} 
		\le  \varepsilon$, 
	\end{center}
	we consider the corresponding parameters' setting:
	\begin{enumerate}
		\item For the first term, consider $c_k$ defined in 
		(\ref{VRNonCS-SCSG:Non:Lemma:NewSequence-c_k})
		define ${c_{k }} = 
		{c_{k+1}}Y + U$,  based on Lemma \ref{VRNonCS-SCSG:Lemma:GeometriProgression}, for 
		$k=K$, 
		we 
		have
		\begin{align*}
		{c_K} = {\left( {\frac{1}{Y}} \right)^K}\left( {{c_0} + \frac{U}{{Y - 1}}} \right) - 
		\frac{U}{{Y - 1}},
		\end{align*}
		where
		\begin{align*}
		Y=& {1 + \left( {\frac{2}{h} + 4hW} \right)\eta  + 10\left( 
			{B_G^4L_F^2 + W} \right){\eta ^2}},\\
		U=& 2W\eta  + 5(L_f^2 + W){L_f}{\eta ^2}>0.
		\end{align*}
		By setting $c_K\to 0$,  we obtain
		\begin{align*}
		{c_0} = \frac{{U{Y^K}}}{{Y - 1}} - \frac{U}{{Y - 1}} = \frac{{U\left( {{Y^K} - 1} 
				\right)}}{{Y 
				- 1}}.
		\end{align*}				
		Then, putting the Y and U  into the above equation. We have
		\begin{align}\label{VRNonCS-SCSG:Non:Lemma:paraeters setting-c0}
		{c_0} = \frac{{2W\eta  + 5(L_f^2 + W){L_f}{\eta ^2}}}{{\left( {\frac{2}{h} + 4hW} 
				\right)\eta  + 
				10\left( {L_f^2 + W} \right){\eta ^2}}}C = \frac{{2W + 5(L_f^2 + W){L_f}{\eta} 
		}}{{\left( 
				{\frac{2}{h} + 4hW} \right) + 10\left( {L_f^2 + W} \right)\eta }}C,			
		\end{align}
		where $C = {Y^K} - 1$. Because 	$c_0$ has the influence on the parameters such as $K$, $C$ 
		and $u_0$, we analyze them separately,	
		\begin{enumerate}
			\item For $K$ and $C$, based on  the  character of function ${\left( {1 + 
					\frac{1}{t_2}} 
				\right)^{t_1}} \to e$,\footnote{Here the 'e' is the Euler number, approximate to 
				2.718. } as $t_1,t_2 \to  + \infty $ and $t_1t_2<1$, and the function 
			is also the increasing	function with 
			an upper bound of $e$, we require		
			\begin{align}\label{VRNonCS-SCSG:Non:Lemma:paraeters setting-K}
			K < 1/\left( {\left( {\frac{2}{h} + 4hW} \right)\eta  + 10\left( {L_f^2 + W} 
				\right){\eta ^2}} \right),
			\end{align}
			thus, we have $C<e-1$.
			\item  For $u_0$ defined in (\ref{VRNonCS-SCSG:Non:Lemma:NewSequence-u_k}), in order to 
			keep $u_k>0$, we need to keep $c_0h<1/4$. If 
			$c_0h<1/4$, there 
			exits a constant $\tilde{u}$ such that $u_0=\tilde{u}\eta$. In order to satisfy 
			$c_0h<1/4$, combine 
			with (\ref{VRNonCS-SCSG:Non:Lemma:paraeters setting-c0}) and $C<e-1$, that is
			\begin{align*}
			{c_0}h \le \frac{{2W + 5(L_f^2 + W){L_f}{\eta} }}{{\left( {\frac{2}{h} + 4hW} 
					\right) + 
					10\left( 
					{L_f^2 + W} \right)\eta }}h\left( {e - 1} \right) \le \frac{1}{4},
			\end{align*}
			
			\begin{enumerate}
				\item 	By setting $h = \frac{1}{{5\sqrt {L_f^3\eta } }}$, there exist 
				$\tilde{w}>0$, based on 
				above inequality, we have			
				\begin{align*}
				W \le \frac{{16L_f^3\eta  + 50L_f^{3.5}\sqrt \eta  \eta }}{{9.6 + 34L_f^3\eta  - 
						50\sqrt {L_f^3\eta } \eta }} < \tilde wL_f^3\eta 
				\end{align*}
				Thus, combine with the definition of W in 
				(\ref{VRNonCS-SCSG:Non:Lemma:NewSequence-W}), we require that
				\begin{align*}
				W =& B_G^4L_F^2\left( {4\frac{\mathbb{I}{\left( {A < n} \right)}}{A} + 
					4\frac{\mathbb{I}{\left( {D < 
								n} \right)}}{D}} \right)\\
				\le& 8B_G^4L_F^2\frac{\mathbb{I}{\left( {A < n} \right)}}{A}
				\le \tilde wL_f^3\eta  = \mathcal{O}\left( L_f^3\eta  \right).
				\end{align*}
				If $A<n$, we  require $A \ge {\cal O}\left( {B_G^4L_F^2/(L_f^3\eta )} \right)$. 
				Thus, we 
				have 
				$A = \min \left\{ {n,\mathcal{O}\left( {1/\eta } \right)} \right\}$.
				\item Based on the setting of $h$ and $W$, combing with 
				(\ref{VRNonCS-SCSG:Non:Lemma:paraeters setting-K}), we have
				\begin{align*}
				K <& \frac{1}{{\left( {10\sqrt {L_f^3\eta }  + \frac{4}{{5\sqrt {L_f^3\eta } 
							}}\tilde wL_f^3\eta } \right)\eta  + 10\left( {L_f^2 + \tilde 
							wL_f^3\eta } 
						\right){\eta ^2}}}\\
				=& \frac{1}{{\left( {10\sqrt {L_f^3\eta }  + \frac{4}{5}\sqrt {L_f^3\eta } } 
						\right)\eta  + 10\left( {L_f^2 + \eta } \right){\eta ^2}}} = {\cal O}\left( 
				{\frac{1}{{{{\left( {{L_f}\eta } \right)}^{3/2}}}}} \right).
				\end{align*}
				
			\end{enumerate}
		\end{enumerate}
		\item For the second term about $J_0$, as $u_0=w_1\eta$, we require
		\begin{align*}
		\frac{{{J_0}}}{{{\tilde{u}}\eta }} = &\frac{1}{{{\tilde{u}}}}\left( {\frac{1}{2} + h{c_0}} 
		\right){W_2} + \left( {{L_f} + 2{c_0}} \right){W_1}\eta \\
		\le& \frac{1}{{{\tilde{u}}}}{W_1}\left( {\frac{3}{5} + {L_f}\eta  + \frac{1}{2}\eta \sqrt 
			\eta  } \right)\\
		\le& \frac{1}{{{\tilde{u}}}}\left( {20B_G^2L_F^2{H_1} + 5{H_2}} \right)\left( {\frac{3}{5} 
			+ 
			{L_f}\eta  + \eta \sqrt \eta  } \right)\frac{\mathbb{I}{(D < n)}}{D}
		\le \frac{1}{2}\varepsilon, 
		\end{align*}
		Then, if $D<n$, we require that 
		\begin{center}
			$D \ge \frac{2}{{\varepsilon {\tilde{u}}}}\left( {20B_G^2L_F^2{H_1} + 5{H_2}} 
			\right)\left( 
			{\frac{3}{5} + \frac{1}{2}{L_f}\eta  + {c_0}\eta \sqrt \eta  } \right) = 
			\mathcal{O}\left( 
			{\frac{1}{\varepsilon }} \right)$.
		\end{center}
		Thus, we set $D = \min \left\{ {n,\mathcal{O}(1/\varepsilon )} \right\}$.
		\item Based on  the first term $\frac{{{L_f}(f({x_0}) - f({x^*}))}}{{\eta SK}} 
		\le \frac{1}{2}\varepsilon $, the total number of iteration is $T=SK=\frac{{2{L_f}(f({x_0}) 
				- f({x^*}))}}{{\eta 
				\varepsilon }}$. 
	\end{enumerate}
	Thus, based on the above parameters' setting, we can ensure that $	\mathbb{E}[\| \nabla f(\hat 
	x_k^s ) \|^2]  \le \varepsilon$.
\end{proof}
\section{Proof for the Mini-batch of the SC-SGSG to the composition problem}

\begin{lemma2}\ref{VRNonCS-SCSG-miniBatch:Lemma:Bound-estimate-G-4}
	Suppose 
	Assumption \ref{VRNonCS-SCSG:Assumption:G}-\ref{VRNonCS-SCSG:Assumption:Middle-Bound} holds, 
	for 
	${{\hat 
			G}_k}$ defined in (\ref{VRNonCS-SCSG:Definition:Estimate-G}) and $\Lambda $ defined in 
	Algorithm \ref{VRNonCS-SCSG:AlgorithmII} with $\mathcal{D} = 
	\left[ 
	{{\mathcal{D}_1},{\mathcal{D}_2}} \right]$ and $D =\left|{\cal 
		D}_1\right|=\left|{\cal 
		D}_2\right|$, we have
	\begin{align*}
	&\mathbb{E}_{i_k,j_k,\cal A, \cal D}{\| {\Lambda  - \nabla f\left( {{x_k}} 
			\right)} \|^2}\\
	\le&5B_G^4L_F^2\left( 
	{\frac{L_f^2}{bB_G^4L_F^2} + 
		4\frac{{\mathbb{I}\left( {A < n} \right)}}{A} + 4\frac{{\mathbb{I}\left( {D < n} 
				\right)}}{D}} \right)\mathbb{E}{\left\| {{x_k} - {{\tilde x}_s}} \right\|^2} + 
	20B_G^2L_F^2\frac{{\mathbb{I}(D < 
			n)}}{D}{H_1} + 5\frac{{\mathbb{I}(D^2 < n^2)}}{{D^2}}{H_2},
	\end{align*}
\end{lemma2}

\begin{proof}
	Through adding and subtracting the term of $\frac{1}{b}\sum\limits_{\left( {i,j} \right) \in 
		{I_b}}^{} {{(\partial {G_j}({x_k}))^\mathsf{T}}\nabla 
		{F_i}(G({x_k}))} 	
	$, $\frac{1}{b}\sum\limits_{\left( {i,j} \right) \in {I_b}}^{} { {(\partial {G_i}({{\tilde 
					x}_s}))^\mathsf{T}}\nabla {F_i}(G({{\tilde 
				x}_s}))}$,  and 
	$(\partial 
	G({{\tilde x}_s}))^\mathsf{T}\nabla F(G({{\tilde x}_s}))$, ${(\partial {G_{{{\cal 
						D}_1}}}({{\tilde 
				x}_s}))^\mathsf{T}}\nabla {F_{{{\cal D}_1}}}(G({{\tilde x}_s}))$, we have
	\begin{align*}
	&\mathbb{E}{\| { \Lambda  - \nabla f\left( {{x_k}} \right)} \|^2}\\
	\le& 5E{\left\| {\frac{1}{b}\sum\limits_{\left( {i,j} \right) \in {I_b}}^{} {{{(\partial 
						{G_j}({x_k}))}^T}\nabla {F_i}(G({x_k})) - {{(\partial {G_j}({{\tilde 
						x}_s}))}^T}\nabla 
				{F_i}(G({{\tilde x}_s}))}  - \left( {\nabla f({x_k}) - {{(\partial G({{\tilde 
				x}_s}))}^T}\nabla 
				F(G({{\tilde x}_s}))} \right)} \right\|^2}\\
	&+ 5E{\left\| {\frac{1}{b}\sum\limits_{\left( {i,j} \right) \in {I_b}}^{} {{{(\partial 
						{G_j}({x_k}))}^T}\nabla {F_i}({{\hat G}_k}) - {{(\partial 
						{G_j}({x_k}))}^T}\nabla {F_i}(G({x_k}))} 
		} \right\|^2}\\
	&+ 5E{\left\| {\frac{1}{b}\sum\limits_{\left( {i,j} \right) \in {I_b}}^{} {{{(\partial 
						{G_j}({{\tilde x}_s}))}^T}\nabla {F_i}(G({{\tilde x}_s})) - {{(\partial 
						{G_j}({{\tilde 
								x}_s}))}^T}\nabla {F_i}({G_{{D_1}}}({{\tilde x}_s}))} } 
								\right\|^2}\\
	&+ 5E{\left\| {\nabla {{\hat f}_{\cal D}}({{\tilde x}_s}) - {{(\partial {G_{{D_1}}}({{\tilde 
							x}_s}))}^T}\nabla {F_{{D_2}}}(G({{\tilde x}_s}))} \right\|^2}\\
	&+ 5E{\left\| {{{(\partial {G_{{\mathcal{D}_1}}}({{\tilde x}_s}))}^T}\nabla 
	{F_{{\mathcal{D}_2}}}(G({{\tilde 
	x}_s})) - 
			{{(\partial G({{\tilde x}_s}))}^T}\nabla F(G({{\tilde x}_s}))} \right\|^2}\\
	\mathop  \le \limits^{\scriptsize \textcircled{\tiny{2}}}& {\frac{5}{b}}L_f^2\mathbb{E}{\left\| 
		{{x_k} - 
			{{\tilde x}_s}} \right\|^2} + 5B_G^2L_F^2\mathbb{E}{\left\| {{{\hat 
					G}_k} - G({x_k})} \right\|^2} + 5B_G^2L_F^2\mathbb{E}{\left\| {G({{\tilde 
					x}_s}) - {G_{{{\cal 
							D}_1}}}({{\tilde x}_s})} \right\|^2}\\& + 5B_G^2L_F^2\mathbb{E}{\left\| 
		{G({{\tilde x}_s}) - 
			{G_{{{\cal 
							D}_1}}}({{\tilde x}_s})} \right\|^2} + 5\frac{{\mathbb{I}(D^2 < 
			n^2)}}{{D^2}}{H_2}\\
	\mathop  \le \limits^{\scriptsize \textcircled{\tiny{3}}}& 5B_G^4L_F^2\left( 
	{\frac{L_f^2}{B_G^4L_F^2} + 
		4\frac{{\mathbb{I}\left( {A < n} \right)}}{A} + 4\frac{{\mathbb{I}\left( {D < n} 
				\right)}}{D}} \right)\mathbb{E}{\left\| {{x_k} - {{\tilde x}_s}} \right\|^2} + 
	20B_G^2L_F^2\frac{{\mathbb{I}(D < 
			n)}}{D}{H_1} + 5\frac{{\mathbb{I}(D^2 < n^2)}}{{D^2}}{H_2},
	\end{align*}	
	where ${\small\textcircled{\scriptsize{1}}}$ follows from $||a_1+a_2+a_3+a_4+a_5||^2\le 
	5a_1^2+5a_2^2+5a_3^2+5a_4^2+5a_5^2$, and Lemma 
	\ref{VRNonCS-SCSG:Appendix:Lemma:Inequation-indepent},
	${\small\textcircled{\scriptsize{2}}}$ is based on $
	\mathbb{E}[ \| X - \mathbb{E}[ X ] \|^2 ]  =  \mathbb{E}[ X^2 - \| \mathbb{E}[ 
	X ] \|^2 ] \le \mathbb{E}[ X^2 ]$, the smoothness of $F_i$ in Assumption 
	\ref{VRNonCS-SCSG:Assumption:GF}, the bounded Jacobian of $G(x)$ and the 
	smoothness of $F$ in Assumption \ref{VRNonCS-SCSG:Assumption:G} and 
	\ref{VRNonCS-SCSG:Assumption:F}, and the upper 
	bound of variance  in Assumption 
	\ref{VRNonCS-SCSG:Assumption:Middle-Bound} and Lemma 
	\ref{VRNonCS-SCSG:Appendix:Lemma:Inequation-indepent-double}.
	${\small\textcircled{\scriptsize{3}}}$ is based on Lemma 
	\ref{VRNonCS-SCSG:Lemma:Bound-estimate-G-2} and Assumption 
	\ref{VRNonCS-SCSG:Assumption:Middle-Bound}.
\end{proof}

\begin{corollary2}\ref{VRNonCS-SCSG-miniBatch-non:query complexity}
	Suppose Assumption \ref{VRNonCS-SCSG:Assumption:G}-
	\ref{VRNonCS-SCSG:Assumption:Middle-Bound} holds, in Algorithm \ref{VRNonCS-SCSG:AlgorithmII}, 
	Let $h =\sqrt {b/\eta}$,  	the step size is  $\eta  = {b^{3/5}}\min \{ 1/n^{2/5},\varepsilon 
	^{2/5} 
	\}$, the set-size 
	of $\cal A$ is $A = \min \left\{ {n,\mathcal{O}\left( {b/\eta } \right)} \right\}$, the 
	set-size of the subset 
	$\mathcal{D}_1$ and 
	$\mathcal{D}_2$ are $ D = \min \left\{ {n,\mathcal{O}(1/\varepsilon )} \right\}$,  the number 
	of 
	inner iteration is $K \le 
	\mathcal{O}\left( {b^{1/2}/({\eta ^{3/2}})} \right)$, the total number of iteration is $T = 
	\mathcal{O}\left( {1/\left( 
		{\varepsilon \eta } \right)} \right)$, in order to obtain 
	obtain	$\mathbb{E}[\| \nabla f(\hat x_k^s ) \|^2]  \le 
	\varepsilon$.The query complexity is 
	\begin{align*}
	 \frac{1}{{{b^{1/5}}}}\mathcal{O}\left( {\min \left\{ {\frac{1}{{{\varepsilon 
						^{9/5}}}},\frac{{{n^{4/5}}}}{\varepsilon }} \right\}} \right)
	\end{align*}
\end{corollary2}
\begin{proof}
	Based on the parameters' setting, that is $ D = \min \left\{ {n,\mathcal{O}(1/\varepsilon )} 
	\right\}$, $A = \min \left\{ {n,\mathcal{O}\left( {b/\eta } \right)} \right\}$, $K \le 
	\mathcal{O}\left( {b^{1/2}/{{\eta ^{3/2}}}} \right)$, and  $T = 
	\mathcal{O}\left( {1/\left( 
		{\varepsilon \eta } \right)} \right)$, we have,
	\begin{align*}
	{\cal O}\left( {\frac{T}{K}\left( {D + KA} \right)} \right) =& 
	 O\left( {\frac{{{\eta ^{3/2}}}}{{\varepsilon {b^{1/2}}\eta }}\left( {\min \left\{ 
	 {n,\frac{1}{\varepsilon }} \right\} + \frac{{{b^{1/2}}b}}{{{\eta ^{3/2}}\eta }}} \right)} 
	 \right) = O\left( {\frac{{{\eta ^{1/2}}}}{{\varepsilon {b^{1/2}}}}\left( {\min \left\{ 
	 {n,\frac{1}{\varepsilon }} \right\} + \frac{{{b^{3/2}}}}{{{\eta ^{5/2}}}}} \right)} \right)\\
	=& \frac{1}{{\varepsilon {b^{1/2}}}}O\left( {\min \left\{ {n,\frac{1}{\varepsilon }} 
	\right\}{\eta ^{1/2}} + \frac{{{b^{3/2}}}}{{{\eta ^2}}}} \right)\\
	\ge& \frac{1}{{{b^{1/5}}}}O\left( {\min \left\{ {\frac{{{n^{4/5}}}}{\varepsilon 
	},\frac{1}{{{\varepsilon ^{9/5}}}}} \right\}} \right)
	\end{align*}
	where the optimal $\eta  = {b^{3/5}}\min \left\{ {1/{n^{2/5}},{\varepsilon ^{2/5}}} \right\}$.
\end{proof}

\end{document}